\documentclass[11pt,reqno]{amsart}
\usepackage{amsaddr}
\usepackage[pdftex]{graphicx}
\usepackage{pgfplots}
\usepackage{orcidlink}
\usepackage{geometry}
\usepackage{stix}
\usepackage{subcaption}
\usepackage{caption}
\usepackage{amsmath,amsthm}
\usepackage{cite}
\usepackage{hyperref}
\hypersetup{
    colorlinks=true, 
    linktoc=all,     
    allcolors=blue,  
}
\usepackage{url}
\newtheorem{theorem}{Theorem}[section]
\newtheorem{lemma}{Lemma}[section]
\newtheorem{proposition}{Proposition}[section]
\newtheorem{corollary}{Corollary}[section]
\newtheorem{conjecture}{Conjecture}[section]
\theoremstyle{definition}
\newtheorem{definition}{Definition}[section]
\newtheorem{example}{Example}[section]
\newtheorem{remark}{Remark}[section]
\numberwithin{equation}{section}
\pgfplotsset{compat=1.18}
\definecolor{maingray}{rgb}{60,60,60}
\definecolor{darkgray}{rgb}{96,96,96}
\definecolor{auxgray}{rgb}{160,160,160}
\definecolor{softgray}{rgb}{120,120,120}
\definecolor{wewdxt}{rgb}{0.45,0.45,0.45}
\definecolor{zzttqq}{rgb}{0.6,0.2,0.}
\definecolor{qqwuqq}{rgb}{0.,0.5,0.}

\begin{document}
\pagenumbering{arabic}

\title[Poncelet Triangles and  Invariants]{A Symmetric Polynomial Approach to Poncelet Triangles Inscribed in a circle and circumscribed about Central Conics}
\author{Mohammad Hassan Murad\orcidlink{0000-0002-8293-5242}}
\address{Department of Mathematics\\
The University of Texas at Arlington, Arlington, TX, USA}
\email{mohammad.murad2@uta.edu}

\begin{abstract}
We develop a symmetric-polynomial framework for the study of one-parameter families of Poncelet triangles. For triangles inscribed in the unit circle and circumscribed about a central conic, we show that the elementary symmetric polynomials of the vertices depend linearly on a single complex parameter. It follows that every symmetric rational function of the vertices is a rational function of this parameter, providing a unified algebraic approach to the investigation of geometric invariants. As applications, we recover several known invariance results and establish new ones involving angles, orthic triangles, tangential triangles, polar circles, and other classical constructions. We also obtain explicit formulas and loci for several associated geometric objects, leading to new invariance phenomena within Poncelet families.
\end{abstract}

\keywords{Poncelet porism; Central conic; Symmetric polynomial; Orthic triangle; Tangential triangle; Polar circle}
\subjclass[2020]{Primary: 51M04, 51N20; Secondary: 51M15}

\maketitle{}

\section{Introduction}
\noindent
One of the central themes in the geometry of Poncelet polygons is the existence of geometric \,invariant quantities. Numerous invariants have been discovered recently for families of Poncelet triangles, concerning classical centers, areas, associated circles and triangles.

The present paper is the third in a series devoted to the geometry of Poncelet triangles and \,central conics. In the first paper \cite{Dragovic-Murad2026}, we established generalized Chapple--Euler relations together with \,several related geometric properties. In the second paper \cite{Murad2026b}, using Marden's theorem, we \,developed an explicit parametrization for the associated Poncelet triangles inscribed in the unit \,circle and circumscribed about a central conic in elementary symmetric polynomials of its vertices. See Theorem \ref{thm:sympara}. This parametrization provides the algebraic foundation for the present paper and allows the study of geometric invariants to be reduced to the analysis of a single complex parameter.

This viewpoint has played an important role in the study of invariant quantities associated with families of triangles circumscribed about ellipses contained in their common circumcircle. See, for example, 
\cite{Helmanetal.2022,Helmanetal.2023,Garciaetal.2026}.

The purpose of the present paper is to demonstrate that this parametrization provides a unified and effective framework for the study of geometric invariants associated with Poncelet families. The central observation is simple but powerful. Since the elementary symmetric polynomials $\sigma_1,\sigma_2,\sigma_3$ generate the algebra of symmetric polynomials, every symmetric polynomial in the vertices of a Poncelet triangle can be expressed as a polynomial in $\sigma_1,\sigma_2,\sigma_3$. Upon substituting the symmetric parametrization, these become polynomials in the single parameter $\lambda$, while symmetric rational functions become rational functions of $\lambda$. Consequently, every problem concerning a symmetric geometric quantity associated with a Poncelet family can be reduced to the study of a single complex parameter.

A subtle point deserves mention. Every triangle in a Poncelet family determines a unimodular parameter $\lambda$, but the converse is not true in general \cite{Murad2026b}. Accordingly, we define
\[
\Lambda=
\{\lambda\in\mathbb T:
\text{$\lambda$ corresponds to a triangle in the Poncelet family }\mathcal P\}.
\]
Thus $\Lambda\subseteq\mathbb T$. When both foci satisfy $a_1,a_2\in\mathbb D$, it follows from the results of Daepp et al. \cite{daepp2002ellipses} that every unimodular parameter generates a Poncelet triangle, so that $\Lambda=\mathbb T$.

Throughout the paper whenever we say that a quantity is \emph{invariant} throughout the family $\mathcal{P}$, we mean that it is independent of the parameter $\lambda\in\Lambda$.

\begin{theorem}[Reduction Principle]
Every symmetric rational function of the vertices of a triangle inscribed in a circle and circumscribed about a central conic is a rational function of the parameter $\lambda$ and the foci of the inconic. Consequently, a geometric quantity remains invariant throughout the family if and only if the corresponding rational function is constant on $\Lambda$.
\end{theorem}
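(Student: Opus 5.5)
The plan is to combine the fundamental theorem of symmetric polynomials with the symmetric parametrization of Theorem \ref{thm:sympara} from \cite{Murad2026b}. That result expresses the elementary symmetric polynomials $\sigma_1,\sigma_2,\sigma_3$ of the vertices $z_1,z_2,z_3\in\mathbb T$ of a Poncelet triangle through $\lambda$ and the foci $a_1,a_2$ of the inconic. Concretely, I expect $\sigma_1=a_1+a_2+\overline{a_1a_2}\,\lambda$, $\sigma_2=a_1a_2+\lambda$ (or a similar affine expression) and $\sigma_3=\lambda$. The key feature is that each $\sigma_k$ is a polynomial, indeed affine, in $\lambda$, with coefficients that are polynomials in $a_1,a_2,\overline{a_1},\overline{a_2}$. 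I will treat the conjugates of the foci as further fixed constants of the family, so that the phrase ``rational function of $\lambda$ and the foci'' is read as rational in $\lambda,a_1,a_2,\overline{a_1},\overline{a_2}$.

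First, let $R=P/Q$ be a symmetric rational function of $z_1,z_2,z_3$. I will reduce to the case where $P$ and $Q$ are both symmetric polynomials. To do this, multiply numerator and denominator by $\prod_{\tau\neq \mathrm{id}}\tau(Q)$, where $\tau$ runs over the nontrivial permutations in $S_3$. The new denominator $\prod_{\tau\in S_3}\tau(Q)$ is symmetric, and since $R$ is symmetric the new numerator must then be symmetric as well. By the fundamental theorem of symmetric polynomials, both become polynomials in $\sigma_1,\sigma_2,\sigma_3$. Substituting the parametrization then gives $R$ as a rational function of $\lambda$ with coefficients in $\mathbb{C}(a_1,a_2,\overline{a_1},\overline{a_2})$.

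Second, I will handle complex conjugates of the vertices, since geometric quantities such as lengths, angles, centers and radii generally involve them. Because $|z_j|=1$, we have $\overline{z_j}=1/z_j$. So any expression in $z_j,\overline{z_j}$ that is symmetric under simultaneous permutation of the vertices is a symmetric rational function of the $z_j$ alone, and the first step applies. Alternatively, one can use $\overline{\sigma_1}=\sigma_2/\sigma_3$, $\overline{\sigma_2}=\sigma_1/\sigma_3$, $\overline{\sigma_3}=1/\sigma_3$, together with $\overline\lambda=1/\lambda$ on $\mathbb T$. The second reading is the one the paper will actually use.

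The ``consequently'' part is essentially tautological once the first statement is in place. The point that needs care, and which I regard as the main subtlety rather than a real obstacle, is the correspondence between triangles and parameters. Each triangle of $\mathcal P$ determines some $\lambda\in\Lambda$, and conversely each $\lambda\in\Lambda$ comes from a triangle. Moreover, that triangle is determined up to relabeling as the root set of $z^3-\sigma_1z^2+\sigma_2z-\sigma_3$, and a symmetric quantity does not see the labeling. Hence the map from triangles to $\Lambda$ is surjective, and the quantity is a well-defined function of $\lambda$ on $\Lambda$. The quantity is therefore independent of the triangle if and only if the rational function is constant on $\Lambda$. One must also exclude parameters at which the denominator vanishes; these are finitely many points unless the denominator vanishes identically, and the latter cannot happen for a quantity defined on nondegenerate triangles. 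If $\Lambda$ is infinite, which is the case here since it contains an arc, constancy on $\Lambda$ is equivalent to the rational function being identically constant. This gives a practical test: check that the derivative in $\lambda$ vanishes.
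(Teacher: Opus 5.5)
Your proposal is correct and takes the same route as the paper. The paper justifies the principle only by noting that $\sigma_1,\sigma_2,\sigma_3$ generate the symmetric polynomials and that substituting Theorem~\ref{thm:sympara} makes them affine in $\lambda$; your symmetrized denominator, your use of $\overline{z_j}=1/z_j$ and $\overline{\lambda}=1/\lambda$, and your remark that constancy on the infinite set $\Lambda$ forces identical constancy supply details the paper leaves implicit. One small correction: the actual formula is $\sigma_2=a_1a_2+(\overline{a_1}+\overline{a_2})\lambda$, which follows from your own relation $\overline{\sigma_1}=\sigma_2/\sigma_3$, although your argument only needs the affine dependence on $\lambda$.
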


This reduction transforms many geometric problems into elementary algebraic ones. Rather than proving each invariant separately, one first expresses the relevant geometric quantity as a symmetric polynomial or rational function of the vertices and then substitutes the symmetric parametrization. The problem of deciding whether the quantity remains invariant is thereby reduced to determining whether the resulting rational function is constant on $\Lambda$. In this sense, the symmetric parametrization serves as a general computational tool for the invariants of Poncelet triangles.

The effectiveness of this approach is illustrated through a variety of applications. We recover several previously known invariants by short and uniform proofs, including the invariance of the sum of the squares of the side lengths and the total area of the power circles established in \cite{Dragovic-Murad2026}. More importantly, we obtain several new results for orthic triangles, midpoint configurations, angle identities, tangential triangles, polar triangles, power circles, and de Longchamps circles. These applications demonstrate that the symmetric parametrization is not merely a description of Poncelet triangles but a practical framework for discovering and proving geometric invariants.

A special case of one of the recovered invariants, originally observed by Dan Reznik, was recently established in \cite{CelikDuguinGuoLuoSpinelliZeytuncuZhu2026} using degree-$3$ Blaschke products under the assumption that both foci of the associated ellipse lie inside the circumcircle. Our approach provides a considerably shorter proof of a stronger form of the observation and applies uniformly to all central conics arising from $3$-Poncelet pairs, including ellipses with both foci lying outside the circumcircle and hyperbolas with one focus lying  outside the circumcircle. See the proof in Subsection \ref{sec:powcirc}.

The paper is organized as follows. Section \ref{sec:sympolfra} recalls the symmetric parametrization and the \,necessary preliminaries from \cite{Murad2026b}. Section \ref{sec:geomapp} and the remaining sections apply this framework to \,derive and study a broad collection of related geometric invariants associated with Poncelet \,triangles, including orthic and tangential triangles; polar, power and de Longchamps circles.

\section{The Symmetric Polynomial Framework}\label{sec:sympolfra}
\noindent
Throughout the paper, we use
\[
\mathbb D=\{z\in\mathbb C:|z|<1\},\qquad
\mathbb T=\{z\in\mathbb C:|z|=1\}=\partial\mathbb D,
\]
and $z_1,z_2,z_3\in\mathbb T$ denote the vertices of a Poncelet triangle. We write
\[
\sigma_1=z_1+z_2+z_3,\qquad
\sigma_2=z_1z_2+z_2z_3+z_3z_1,\qquad
\sigma_3=z_1z_2z_3
\]
for the elementary symmetric polynomials of the vertices.

$\mathcal P$ will denote the family of triangles inscribed in a circle and circumscribed about a central conic. In sections where the circumcircle is normalized to the unit circle, we write $\mathbb T$ for the circumcircle.

The following theorem was proved using Marden's theorem in \cite{Murad2026b}.

\begin{theorem}[Parametrization of Poncelet Triangles]
\label{thm:sympara}
Let $z_1,z_2,z_3\in\mathbb T$ be the vertices of a triangle circumscribed about a central conic with foci $a_1, a_2 \in\mathbb C$. Then there exists $\lambda\in\mathbb T$ such that
\begin{align}
\sigma_1
&=
a_1+a_2+\overline{a_1}\,\overline{a_2}\lambda,
\label{eq:sigma1}
\\
\sigma_2
&=
a_1a_2+
(\overline{a_1}+\overline{a_2})\lambda,
\label{eq:sigma2}
\\
\sigma_3
&=
\lambda.
\label{eq:sigma3}
\end{align}
\end{theorem}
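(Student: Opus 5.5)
Set $\lambda:=\sigma_3=z_1z_2z_3$; since each $z_j\in\mathbb T$, we get $\lambda\in\mathbb T$ and \eqref{eq:sigma3} holds by definition. The content of the theorem is therefore \eqref{eq:sigma1}--\eqref{eq:sigma2}. These would follow from Marden's theorem together with the reflection symmetry that unimodularity of the vertices induces on symmetric functions.

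First step, via Marden's theorem. Consider the cubic $p(z)=(z-z_1)(z-z_2)(z-z_3)=z^3-\sigma_1z^2+\sigma_2z-\sigma_3$. Marden's theorem (in its extended form, which covers conics tangent to the three side lines, possibly on their extensions, including hyperbolas) says that the foci of the inconic are the zeros of $p'$. If the inconic is merely tangent to the three lines and not to the segments, one must invoke the generalized version of Siebeck's theorem rather than the classical statement; I expect this justification to be the main point of care. Granting it,
\[
3z^2-2\sigma_1 z+\sigma_2=3(z-a_1)(z-a_2),
\]
which gives the two relations
\[
2\sigma_1=3(a_1+a_2),\qquad \sigma_2=3a_1a_2 .
\]
These do not match \eqref{eq:sigma1}--\eqref{eq:sigma2} directly. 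So I would instead use the Poncelet/Blaschke formulation in which the foci satisfy $a_1+a_2=\sigma_1-\overline{\sigma_2}\,\sigma_3$ type relations. A careful version of Siebeck's theorem for inscribed triangles gives the foci as zeros of $\sigma_2 z^2\ldots$.

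Second step, via the conjugation identities. More robustly, use $\overline{z_j}=1/z_j$, which gives
\[
\overline{\sigma_1}=\sigma_2/\sigma_3,\qquad \overline{\sigma_2}=\sigma_1/\sigma_3 .
\]
The correct Marden-type statement for the Blaschke product $B(z)=z\,\frac{(z-a_1)(z-a_2)}{(1-\overline{a_1}z)(1-\overline{a_2}z)}$ then applies: the Poncelet triangle vertices are the solutions of $B(z)=\mu$ for some $\mu\in\mathbb T$. Clearing denominators in $B(z)=\mu$ gives
\[
z(z-a_1)(z-a_2)-\mu(1-\overline{a_1}z)(1-\overline{a_2}z)=0 .
\]
This polynomial is monic with constant term $-\mu$. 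Comparing coefficients with $p$ yields
\[
\sigma_3=\mu,\qquad \sigma_1=a_1+a_2+\overline{a_1}\,\overline{a_2}\mu,\qquad \sigma_2=a_1a_2+(\overline{a_1}+\overline{a_2})\mu,
\]
exactly \eqref{eq:sigma1}--\eqref{eq:sigma2} with $\lambda=\mu$.

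Main obstacle. The key step is therefore to establish that a triangle inscribed in $\mathbb T$ and circumscribed about the central conic with foci $a_1,a_2$ has vertices forming a level set of this $B$. This is the Daepp et al.\ result when $a_1,a_2\in\mathbb D$. For the general case (foci outside, hyperbolas), I would prove it by an algebraic-continuation argument, since the coefficient identities are polynomial in $(a_1,\overline{a_1},a_2,\overline{a_2},z_j)$. Alternatively, I would verify directly that, for the three roots of the cubic above, each side line $z_i z_j$ is tangent to the conic. The tangency condition for the chord $z_iz_j$ of the unit circle is symmetric. It reduces, via the chord equation $z+z_iz_j\bar z=z_i+z_j$ and the focal-distance characterization of the conic, to the identity $(z_k-a_1)(z_k-a_2)\cdots$ that follows from the coefficient relations. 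That verification is where I expect the real work to lie.
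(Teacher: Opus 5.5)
Your coefficient comparison for $z(z-a_1)(z-a_2)-\mu(1-\overline{a_1}z)(1-\overline{a_2}z)$ is correct, but it only restates the theorem. The entire content is that the vertices of an \emph{arbitrary} triangle inscribed in $\mathbb T$ and circumscribed about the given conic are the roots of this cubic, and you never prove that step.

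Daepp et al.\ give the opposite direction: level sets of a Blaschke product are Poncelet triangles for the specific ellipse $|w-a_1|+|w-a_2|=|1-\overline{a_1}a_2|$. They do so only for $a_1,a_2\in\mathbb T$'s interior, and even there you would still need Poncelet uniqueness and an argument that the given conic is that ellipse. For foci outside $\mathbb D$ or for hyperbolas, $B$ is no longer a Blaschke product, and, as the paper stresses, $B(z)=\mu$ need not have three unimodular roots. Your ``algebraic continuation'' would need an irreducibility statement you do not have. Your alternative check (that the roots of the cubic span chords tangent to the conic) again runs in the wrong direction.

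Your first step also fails for a concrete reason you did not diagnose. The zeros of $p'$ are the foci of the Steiner inellipse only, the one tangent at the midpoints. That is why you obtained $2\sigma_1=3(a_1+a_2)$.

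The missing idea is the weighted (Siebeck) form of that same theorem, which is the Marden-type route the paper relies on. Take a nondegenerate conic tangent to the three side lines, at interior or exterior points of the sides. There are real weights $m_1,m_2,m_3$, determined by the ratios in which the tangency points divide the sides (their existence follows from Brianchon's and Ceva's theorems), such that $a_1,a_2$ are the zeros of $\sum_j m_j/(z-z_j)$. Moreover $M=m_1+m_2+m_3\neq0$, since $M=0$ corresponds to a parabola and the conic is central. Thus
\[
\sum_{j=1}^3 m_j\prod_{i\neq j}(z-z_i)=M(z-a_1)(z-a_2).
\]
Comparing coefficients and using $\sigma_3=\lambda$ and $1/z_j=\overline{z_j}$ gives
\[
\sum_j m_jz_j=M(\sigma_1-a_1-a_2),\qquad \lambda\sum_j m_j\overline{z_j}=Ma_1a_2.
\]
Since the $m_j$ and $M$ are real, conjugating the second relation yields $\sum_j m_jz_j=M\,\overline{a_1}\,\overline{a_2}\,\lambda$, hence \eqref{eq:sigma1}. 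Then \eqref{eq:sigma2} follows from the identity $\sigma_2=\sigma_3\overline{\sigma_1}$ that you already wrote down. This argument starts from the given triangle, so it needs no converse or uniqueness statement. It also treats ellipses with foci inside or outside $\mathbb T$ and hyperbolas uniformly.
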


The importance of Theorem \ref{thm:sympara} is that, given the foci of the central conic, each elementary symmetric polynomial depends on a single unimodular complex parameter. This simple observation forms the basis of the unified \,approach developed in this paper.

We begin with a simple illustration involving the orthocenter. Readers may consult \cite{Hahn2019} for an introduction to the complex-number approach to Euclidean geometry.

\begin{theorem}[Orthocenter Criterion]
\label{thm:orthocentercriterion}
Let $\mathcal P$ be the family of triangles inscribed in the unit circle $\mathbb T$ and circumscribed about a central conic. Let $\triangle z_1z_2z_3\in\mathcal P$ and let
\[
z_H=z_1+z_2+z_3
\]
denote the complex coordinate of its orthocenter. Then the following statements are equivalent.
\begin{enumerate}
\item[(i)]
For every function $\Phi:\mathbb R_{>0}\to\mathbb R$, the quantity
\[
\Phi(|z_H|)
\]
remains invariant throughout the family $\mathcal P$.
\item[(ii)]
The center of $\mathbb T$ coincides either with the center of the conic or with one of its foci.
\end{enumerate}
\end{theorem}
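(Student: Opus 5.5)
By Theorem \ref{thm:sympara}, the orthocenter of $\triangle z_1z_2z_3\in\mathcal P$ is
\[
z_H=\sigma_1=a_1+a_2+\overline{a_1}\,\overline{a_2}\,\lambda ,\qquad \lambda\in\Lambda\subseteq\mathbb T .
\]
So $z_H$ moves on the circle with center $s=a_1+a_2$ and radius $|a_1a_2|$. The plan is to reduce (i) to the statement that $|z_H|$ is constant on $\Lambda$, and then analyse $|s+p\lambda|$ with $p=\overline{a_1a_2}$. For the reduction, note that (i) with $\Phi=\mathrm{id}$ gives constancy of $|z_H|$. Conversely, if $|z_H|$ is constant, then $\Phi(|z_H|)$ is constant for every $\Phi$. (When $z_H=0$ the expression $\Phi(0)$ is not formally defined; I would treat that degenerate case as constant, or extend $\Phi$ to $0$.) Thus (i) is equivalent to $|z_H|$ being independent of $\lambda\in\Lambda$.

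Next, expand the modulus:
\[
|z_H|^2=|s|^2+|p|^2+2\,\mathrm{Re}\bigl(\overline{s}\,p\,\lambda\bigr).
\]
This is constant on $\Lambda$ if and only if $\mathrm{Re}(\overline{s}p\lambda)$ is constant there. If $\overline{s}p\neq 0$, the map $\lambda\mapsto\mathrm{Re}(\overline{s}p\lambda)$ takes each value at no more than two points of $\mathbb T$. Since $\Lambda$ is infinite (a Poncelet family is a one-parameter family, and distinct triangles give distinct $\sigma$'s, hence distinct $\lambda$'s), constancy forces $\overline{s}p=0$. So $|z_H|$ is invariant exactly when $a_1+a_2=0$ or $a_1a_2=0$.

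Finally, interpret these conditions geometrically. The condition $a_1+a_2=0$ says the center $(a_1+a_2)/2$ of the conic is $0$, the center of $\mathbb T$. The condition $a_1a_2=0$ says one focus is at $0$. This gives (ii), and the converse is immediate from the formula: either $z_H=p\lambda$ with $|z_H|=|a_1a_2|$, or $z_H=a_1+a_2$ is constant.

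The main obstacle is justifying that $\Lambda$ contains enough points, so that a function constant on $\Lambda$ is forced to vanish identically. When $a_1,a_2\in\mathbb D$, the result of Daepp et al.\ gives $\Lambda=\mathbb T$. In general I would argue that the Poncelet family is parametrised continuously and non-constantly by a starting vertex on $\mathbb T$. Since $\lambda=z_1z_2z_3$ cannot be constant along the whole family (otherwise all three $\sigma_i$ would be constant and the triangle fixed), $\Lambda$ contains an arc and is therefore infinite. That is all the argument needs.
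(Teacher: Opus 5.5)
Your proof is correct and follows the paper's argument: you substitute $z_H=a_1+a_2+\overline{a_1a_2}\,\lambda$ from Theorem~\ref{thm:sympara}, expand $|z_H|^2$, and observe that the $\lambda$-dependent term $2\operatorname{Re}\bigl(\overline{(a_1+a_2)}\,\overline{a_1a_2}\,\lambda\bigr)$ is constant exactly when $a_1+a_2=0$ or $a_1a_2=0$, with $\Phi=\mathrm{id}$ giving the reduction from (i). Two of your additions go beyond the paper: the check that $\Lambda$ is infinite (distinct triangles give distinct $\lambda$, because $\lambda$ determines $\sigma_1,\sigma_2,\sigma_3$) supplies a step the paper leaves implicit, and your remark about $z_H=0$ correctly flags a small domain issue with $\Phi:\mathbb R_{>0}\to\mathbb R$, which arises in the equilateral case $a_1=a_2=0$.
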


\begin{proof}
By Theorem~\ref{thm:sympara},
\begin{equation}\label{eq:zHrelation}
z_H
=
a_1+a_2+\overline{a_1a_2}\lambda,
\qquad
\lambda\in\Lambda.
\end{equation}
Hence
\[
|z_H|^2
=
|a_1+a_2|^2
+
|a_1a_2|^2
+
2\operatorname{Re}\left((a_1+a_2)a_1a_2\,\overline{\lambda}\right).
\]
It follows that $|z_H|$ is independent of $\lambda$ if and only if either
\[
a_1+a_2=0
\]
or
\[
a_1a_2=0.
\]
These conditions are equivalent to the circumcenter of the triangle coinciding either with the center of the conic or with one of its foci.

Finally, every quantity of the form $\Phi(|z_H|)$ is invariant throughout $\mathcal P$ if and only if $|z_H|$ itself is invariant, since the identity function is obtained by taking $\Phi(t)=t$. This establishes the equivalence of (i) and (ii).
\end{proof}

For convenience, we shall say that a geometric quantity associated with a triangle in $\mathcal P$ satisfies the \emph{orthocenter criterion} if it is invariant throughout $\mathcal P$ precisely when the distance between the circumcenter and the orthocenter of the triangle is constant, that is, does not depend on the choice of the triangle in $\mathcal P$. Equivalently, this condition holds if and only if the circumcenter of the triangle coincides either with the center of the conic or with one of its foci.

The next corollary illustrates how Theorem \ref{thm:orthocentercriterion} immediately produces invariant quantities.

\begin{corollary}
Let
\[
F(z_1,z_2,z_3)
=
c_1\sigma_1\sigma_2+c_2\sigma_3,
\]
where $c_1,c_2\in\mathbb C$ are independent of the triangle. Then
\[
|F(z_1,z_2,z_3)|
\]
satisfies the orthocenter criterion.
\end{corollary}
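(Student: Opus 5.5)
The plan is to substitute the parametrization of Theorem~\ref{thm:sympara} into $F$ and read off the dependence on $\lambda$, in the same way as in the proof of Theorem~\ref{thm:orthocentercriterion}. Write $s=a_1+a_2$ and $p=a_1a_2$, so that $\sigma_1=s+\overline{p}\lambda$, $\sigma_2=p+\overline{s}\lambda$ and $\sigma_3=\lambda$. Expanding gives
\[
F=c_1 sp+\bigl(c_1(|s|^2+|p|^2)+c_2\bigr)\lambda+c_1\overline{sp}\,\lambda^2 .
\]
Since $|\lambda|=1$, we have $|F|=|F\overline{\lambda}|$. Set $w=sp$ and $B=c_1(|s|^2+|p|^2)+c_2$. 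Then
\[
F\overline{\lambda}=B+c_1\bigl(w\overline{\lambda}+\overline{w}\lambda\bigr)=B+2c_1\,t,
\qquad t=\operatorname{Re}(\overline{w}\lambda)\in\mathbb R .
\]

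\textbf{Direction (ii)$\Rightarrow$ invariance.} If the circumcenter coincides with the center of the conic or with a focus, then $s=0$ or $p=0$, so $w=0$. In that case $|F|=|B|$, which does not depend on $\lambda$.

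\textbf{Converse.} Suppose $w\neq 0$. Then
\[
|F|^2=|B|^2+4\operatorname{Re}(\overline{B}c_1)\,t+4|c_1|^2t^2 .
\]
This is a real quadratic polynomial in $t$ with nonzero leading coefficient, provided $c_1\neq0$. Such a polynomial takes any given value at no more than two values of $t$.

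The parameter set $\Lambda$ is infinite, because a Poncelet family is a continuum of triangles and distinct triangles give distinct $\lambda=\sigma_3$ (the $\sigma_j$ determine the vertex set). Since $w\neq0$, the map $\lambda\mapsto\operatorname{Re}(\overline{w}\lambda)$ is at most two-to-one on $\mathbb T$. Hence $t$ takes infinitely many values on $\Lambda$, and $|F|$ cannot be constant on $\Lambda$.

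By Theorem~\ref{thm:orthocentercriterion}, the condition $w=0$ is exactly the orthocenter criterion, which completes the equivalence.

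\textbf{Main obstacle.} The delicate step is the converse. It requires the nondegeneracy hypothesis $c_1\neq 0$: if $c_1=0$, then $|F|=|c_2|$ is always invariant, so that case must be excluded or set aside as trivial. It also requires the mild fact that $\Lambda$ contains infinitely many points. I would state $c_1\neq0$ explicitly and justify the infinitude of $\Lambda$ from the porism, since every point of the circle is a vertex of some triangle in the family.
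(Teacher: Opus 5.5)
Your proof is correct and is the paper's computation in different clothing: your $F\overline{\lambda}=B+2c_1t$ is exactly $c_1|z_H|^2+c_2$, which the paper gets in one line from $\sigma_2=\overline{\sigma_1}\,\sigma_3=\overline{z_H}\lambda$, so that $|F|=\bigl|c_1|z_H|^2+c_2\bigr|$. The paper then simply cites Theorem~\ref{thm:orthocentercriterion}, which for the particular function $\Phi(x)=|c_1x^2+c_2|$ only gives the direction ``criterion $\Rightarrow$ invariance''; your quadratic-in-$t$ argument over the infinite set $\Lambda$ supplies the missing converse and rightly flags that $c_1\neq0$ must be assumed, since for $c_1=0$ the ``only if'' part of the statement fails.
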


\begin{proof}
From Theorem \ref{thm:sympara}, it follows that
\begin{equation}\label{eq:sigma1sigma2}
\sigma_1=z_H, \qquad    \sigma_2=\overline{z_H}\,\lambda,
\qquad
\sigma_3=\lambda.
\end{equation}
Hence
\[
F(z_1,z_2,z_3)
=
\left(c_1|z_H|^2+c_2\right)\lambda.
\]
Since $|\lambda|=1$,
\[
|F(z_1,z_2,z_3)|
=
\left|c_1|z_H|^2+c_2\right|.
\]
The conclusion follows immediately from Theorem \ref{thm:orthocentercriterion}.
\end{proof}

\begin{example}\label{ex:sympol}
Consider the symmetric polynomial
\[
F(z_1,z_2,z_3)
=
(z_1+z_2)(z_2+z_3)(z_3+z_1).
\]
Using the elementary symmetric polynomials, $F(z_1,z_2,z_3)$ can be written as 
\[
F(z_1,z_2,z_3)
=
\sigma_1\sigma_2-\sigma_3
\]
where $c_1=1, c_2=-1$ are independent of $z_1,z_2,z_3$. 

Therefore
\[
|F(z_1,z_2,z_3)|
=
\bigl||z_H|^2-1\bigr|.
\]
Consequently, $|F(z_1,z_2,z_3)|$ satisfies the orthocenter criterion. In other words, $|F(z_1,z_2,z_3)|$ \,remains invariant throughout the family $\mathcal P$ if and only if the circumcenter of the triangle coincides either with the center of the conic or with one of its foci.
\end{example}

\begin{remark}
    The modulus of the symmetric polynomial in Example \ref{ex:sympol} admits two natural \,geometric interpretations. Indeed,
\[
|F(z_1,z_2,z_3)|
=
8\,|OM_A|\,|OM_B|\,|OM_C|,
\]
where $M_A$, $M_B$, and $M_C$ are the midpoints of the sides of $\triangle z_1z_2z_3$, and
\[
|F(z_1,z_2,z_3)|
=
|AH|\,|BH|\,|CH|,
\]
where $H$ is the orthocenter of $\triangle z_1z_2z_3$. Thus the same symmetric polynomial can be used to study whether the product of distances from the circumcenter to the side midpoints, or a product of distances from the orthocenter to the vertices satisfy orthocenter criterion. See Subsection \ref{sec:orthictri}.
\end{remark}

\section{Geometric Applications}\label{sec:geomapp}

In this section we illustrate the effectiveness of the symmetric parametrization by deriving several geometric consequences for Poncelet triangles. Throughout, the principal tool is the orthocenter parametrization \eqref{eq:zHrelation}. As we shall see, many geometric properties follow immediately from this simple identity.

\subsection{The Orthocenter Locus}
\begin{theorem}[Orthocenter Locus]\label{thm:orthcirc}
Let a triangle be inscribed in $\mathbb T$ and circumscribed about a central conic with foci $a_1,a_2\in\mathbb C$. Then the orthocenter of the triangle lies on the circle centered at $a_1+a_2$ with radius $|a_1a_2|$.
\end{theorem}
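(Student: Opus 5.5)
The plan is to read the result off the orthocenter parametrization \eqref{eq:zHrelation}, which was obtained from Theorem~\ref{thm:sympara}. First, I would recall the standard fact from the complex-number model of Euclidean geometry (see \cite{Hahn2019}): for a triangle inscribed in the unit circle centered at the origin, the orthocenter is $z_H=z_1+z_2+z_3=\sigma_1$. Applying \eqref{eq:sigma1} gives, for some $\lambda\in\Lambda\subseteq\mathbb T$,
\[
z_H=a_1+a_2+\overline{a_1}\,\overline{a_2}\,\lambda .
\]

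Next, I would subtract the proposed center:
\[
z_H-(a_1+a_2)=\overline{a_1a_2}\,\lambda .
\]
Taking moduli and using $|\lambda|=1$ gives
\[
|z_H-(a_1+a_2)|=|\overline{a_1a_2}|\,|\lambda|=|a_1a_2| .
\]
So the orthocenter lies on the circle with center $a_1+a_2$ and radius $|a_1a_2|$, which is the claim.

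There is no genuine obstacle. The only points worth a remark concern degenerate or incomplete cases.
\begin{enumerate}
\item[(1)] If $a_1a_2=0$, that is, one focus is at the circumcenter, the circle degenerates to the single point $a_1+a_2$. The orthocenter is then fixed, which is consistent with Theorem~\ref{thm:orthocentercriterion}.
\item[(2)] The statement only asserts containment in the circle, not that every point of the circle is attained. That is correct, since $\Lambda$ may be a proper subset of $\mathbb T$. When $\Lambda=\mathbb T$, for instance when $a_1,a_2\in\mathbb D$, the map $\lambda\mapsto a_1+a_2+\overline{a_1a_2}\lambda$ sweeps out the whole circle. I would note this as a remark rather than prove it.
\end{enumerate}
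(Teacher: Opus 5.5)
Your proposal is correct and is essentially the paper's own proof: identify $z_H=\sigma_1$, use the parametrization \eqref{eq:sigma1} to get $z_H-(a_1+a_2)=\overline{a_1a_2}\,\lambda$, and conclude from $|\lambda|=1$. Your side remarks on the degenerate case $a_1a_2=0$ and on $\Lambda$ possibly being a proper subset of $\mathbb T$ are accurate, but the containment claim does not need them.
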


\begin{figure}
  \begin{subfigure}[b]{0.4\textwidth}
    \centering
\begin{tikzpicture}[scale=2.1]
\clip(-1.2,-1.4) rectangle (1.2,1.2);
\draw [line width=1.pt] (0.,0.) circle (1.cm);
\draw [rotate around={8.130102354155984:(0.05,-0.45)},line width=1.pt,color=red] (0.05,-0.45) ellipse (0.4870318280275454cm and 0.33496268674563245cm);
\draw [line width=1.pt,color=blue] (-0.29701349243427755,0.954873282332265)-- (-0.4734897603533804,-0.8807993226839462);
\draw [line width=1.pt,color=blue] (-0.4734897603533804,-0.8807993226839462)-- (0.742165894161016,-0.6702162229789574);
\draw [line width=1.pt,color=blue] (0.742165894161016,-0.6702162229789574)-- (-0.29701349243427755,0.954873282332265);
\draw [line width=1.pt,color=zzttqq] (0.1,-0.9) circle (0.32984845004941277cm);
\draw [line width=1.pt,color=qqwuqq] (0.20015045541751703,-1.2142767670058732)-- (0.4010075633955934,-0.7651132075455577);
\draw [line width=1.pt,color=qqwuqq] (0.4010075633955934,-0.7651132075455577)-- (-0.21532202560606453,-0.803190805355646);
\draw [line width=1.pt,color=qqwuqq] (-0.21532202560606453,-0.803190805355646)-- (0.20015045541751703,-1.2142767670058732);
\begin{scriptsize}
\draw [fill=wewdxt] (0.,0.) circle (0.6pt);
\draw[color=wewdxt] (0.03514880746677769,0.07945618393525569) node {$O$};
\draw[color=black] (-0.66188243730359,0.8576486010419004) node {$\mathbb T$};
\draw [fill=black] (-0.3,-0.5) circle (0.6pt);
\draw[color=black] (-0.25368948231820343,-0.40989794338640745) node {$a_1$};
\draw [fill=black] (0.4,-0.4) circle (0.6pt);
\draw[color=black] (0.42,-0.3) node {$a_2$};
\draw[color=red] (-0.23220564258213047,-0.10196290716936088) node {$\mathcal D$};
\draw [fill=wewdxt] (-0.29701349243427755,0.954873282332265) circle (0.6pt);
\draw[color=wewdxt] (-0.3,1.05) node {$A$};
\draw [fill=wewdxt] (-0.4734897603533804,-0.8807993226839462) circle (0.6pt);
\draw[color=wewdxt] (-0.54,-0.96) node {$B$};
\draw [fill=wewdxt] (0.742165894161016,-0.6702162229789574) circle (0.6pt);
\draw[color=wewdxt] (0.8,-0.75) node {$C$};
\draw [fill=blue] (-0.02833735862664194,-0.5961422633306387) circle (0.6pt);
\draw[color=blue] (0,-0.5) node {$H$};
\draw[color=zzttqq] (0.2,-0.67) node {$\Gamma$};
\draw [fill=wewdxt] (0.20015045541751703,-1.2142767670058732) circle (0.6pt);
\draw[color=wewdxt] (0.20,-1.31) node {$D$};
\draw [fill=wewdxt] (0.4010075633955934,-0.7651132075455577) circle (0.6pt);
\draw[color=wewdxt] (0.48,-0.78) node {$E$};
\draw [fill=wewdxt] (-0.21532202560606453,-0.803190805355646) circle (0.6pt);
\draw[color=wewdxt] (-0.3,-0.78) node {$F$};
\draw [fill=qqwuqq] (0.185835993207046,-0.982580779907077) circle (0.6pt);
\draw[color=qqwuqq] (0.22,-0.88) node {$H'$};
\end{scriptsize}
\end{tikzpicture}
    \caption{$\mathcal D$ is an ellipse}
    \label{fig:orthcirc(A)}
    \end{subfigure}
  \begin{subfigure}[b]{0.45\textwidth}
    \centering
\definecolor{qqwuqq}{rgb}{0.,0.39215686274509803,0.}
\begin{tikzpicture}[scale=2.]
\clip(-1.5,-2) rectangle (3,2);
\draw [line width=1.pt] (0.,0.) circle (1.cm);
\draw [samples=50,domain=-0.99:0.99,rotate around={35.83765295427828:(0.6,-0.15)},xshift=0.6cm,yshift=-0.15cm,line width=1.pt,color=red] plot ({1.0636023712341884*(1+(\x)^2)/(1-(\x)^2)},{0.3181980450992955*2*(\x)/(1-(\x)^2)});
\draw [samples=50,domain=-0.99:0.99,rotate around={35.83765295427828:(0.6,-0.15)},xshift=0.6cm,yshift=-0.15cm,line width=1.pt,color=red] plot ({1.0636023712341884*(-1-(\x)^2)/(1-(\x)^2)},{0.3181980450992955*(-2)*(\x)/(1-(\x)^2)});
\draw [line width=1.pt,color=zzttqq] (1.2,-0.3) circle (1.3509256086106296cm);
\draw [line width=1.pt,color=qqwuqq] (0.7255680024340596,0.9648771796840954)-- (-0.10223656529286185,-0.6594160931486652);
\draw [line width=1.pt,color=qqwuqq] (-0.10223656529286185,-0.6594160931486652)-- (2.5506403813511196,-0.272241032733447);
\draw [line width=1.pt,color=qqwuqq] (2.5506403813511196,-0.272241032733447)-- (0.7255680024340596,0.9648771796840954);
\draw [line width=1.pt,color=blue] (-0.944776431597904,0.32771556920160333)-- (-0.12588168256050736,-0.9920452620701012);
\draw [line width=1.pt,color=blue] (-0.12588168256050736,-0.9920452620701012)-- (0.9200135880789069,0.3918864602792184);
\draw [line width=1.pt,color=blue] (0.9200135880789069,0.3918864602792184)-- (-0.944776431597904,0.32771556920160333);
\begin{scriptsize}
\draw [fill=wewdxt] (0.,0.) circle (0.7pt);
\draw[color=wewdxt] (0.04318848061106703,0.10748315747039414) node {$O$};
\draw[color=black] (-0.5995676432836226,0.9076489443596993) node {$\mathbb T$};
\draw [fill=black] (-0.3,-0.8) circle (0.7pt);
\draw[color=black] (-0.4,-0.81) node {$a_1$};
\draw [fill=black] (1.5,0.5) circle (0.7pt);
\draw[color=black] (1.59,0.6) node {$a_2$};
\draw[color=red] (1.899310756755936,1.5700812761287144) node {$\mathcal D$};
\draw [fill=wewdxt] (-0.944776431597904,0.32771556920160333) circle (0.7pt);
\draw[color=wewdxt] (-1.04,0.37) node {$A$};
\draw [fill=wewdxt] (-0.12588168256050736,-0.9920452620701012) circle (0.7pt);
\draw[color=wewdxt] (-0.15,-1.09) node {$B$};
\draw [fill=wewdxt] (0.9200135880789069,0.3918864602792184) circle (0.7pt);
\draw[color=wewdxt] (1,0.43) node {$C$};
\draw [fill=blue] (-0.1506445260795046,-0.2724432325892794) circle (0.7pt);
\draw[color=blue] (-0.25,-0.25) node {$H$};
\draw[color=zzttqq] (1.1,1.15) node {$\Gamma$};
\draw [fill=wewdxt] (0.7255680024340596,0.9648771796840954) circle (0.7pt);
\draw[color=wewdxt] (0.75,1.09) node {$D$};
\draw [fill=wewdxt] (-0.10223656529286185,-0.6594160931486652) circle (0.7pt);
\draw[color=wewdxt] (-0.19,-0.65) node {$E$};
\draw [fill=wewdxt] (2.5506403813511196,-0.272241032733447) circle (0.7pt);
\draw[color=wewdxt] (2.64,-0.27) node {$F$};
\draw [fill=qqwuqq] (0.773971818492316,0.6332200538019839) circle (0.7pt);
\draw[color=qqwuqq] (0.8367955315422653,0.7436805454069728) node {$H'$};
\end{scriptsize}
\end{tikzpicture}
    \caption{$\mathcal D$ is a hyperbola}
    \label{fig:orthcirc(B)}
    \end{subfigure}
        \caption{$\triangle ABC$ is inscribed in $\mathbb T$ and circumscribed about $\mathcal D$. The orthocenter $H$ of $\triangle ABC$ lies on the circle $\Gamma$ with center $a_1+a_2$ and radius $|a_1a_2|$. $(\Gamma,\mathcal D)$ is also a 3-Poncelet pair. The orthocenter $H'$ of the triangle $\triangle DEF$ inscribed in $\Gamma$ and circumscribed about $\mathcal D$ lies on $\mathbb T$. See Corollary \ref{cor:loc3pons}.}
    \label{fig:orthcirc}
\end{figure}

\begin{proof}
By \eqref{eq:zHrelation},
\[
z_H-(a_1+a_2)=\overline{a_1a_2}\lambda.
\]
Since $|\lambda|=1$, $z_H$ lies on the circle
\[
\Gamma: |z-(a_1+a_2)|=|a_1a_2|,
\]
which proves the theorem.
\end{proof}

The two most important special cases occur when the circumcenter of the triangle
coincides with either the center or a focus of the inscribed conic.

\begin{corollary}\label{cor:orthcirccen}
Let a triangle be inscribed in a circle and circumscribed about an ellipse. Assume that the circumcenter of the triangle coincides with the center of the ellipse. Then the orthocenter of the triangle lies on a circle concentric with the circumcircle of the triangle.
\end{corollary}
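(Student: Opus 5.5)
The plan is to normalize the configuration and then read the corollary off directly from Theorem~\ref{thm:orthcirc}. Since the statement is invariant under similarities of the plane, we may translate and scale so that the circumcircle of the triangle is the unit circle $\mathbb T$ with center $O=0$. Every triangle of the family is then inscribed in $\mathbb T$ and circumscribed about the given ellipse with foci $a_1,a_2$, so Theorem~\ref{thm:sympara} and Theorem~\ref{thm:orthcirc} apply.

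The key step is to translate the hypothesis into a condition on the foci. The center of a central conic is the midpoint $\tfrac12(a_1+a_2)$ of its foci. So the circumcenter coinciding with the center of the ellipse means exactly $a_1+a_2=0$, that is, $a_2=-a_1$.

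Substituting this into Theorem~\ref{thm:orthcirc}, the orthocenter lies on the circle centered at $a_1+a_2=0$ with radius $|a_1a_2|=|a_1|^2$. Equivalently, by \eqref{eq:zHrelation},
\[
z_H=\overline{a_1a_2}\,\lambda=-\overline{a_1}^{\,2}\lambda ,
\]
so $|z_H|=|a_1|^2$ because $|\lambda|=1$. This circle is centered at $O$, hence concentric with $\mathbb T$. Undoing the normalization, the radius becomes $|OF|^2/R$, where $F$ is either focus and $R$ is the circumradius.

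The only point needing a remark is the degenerate case $a_1=0$. Then the ellipse is a circle centered at $O$, the triangle is equilateral, and the "circle" has radius $0$: $H=O$. This is still a (degenerate) concentric circle, so I would either note it explicitly or allow radius zero. Otherwise the argument has no real obstacle; it is a direct specialization of Theorem~\ref{thm:orthcirc}.
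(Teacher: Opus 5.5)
Your proposal is correct and follows essentially the same route as the paper: normalize to $\mathbb T$, use that the conic's center is the midpoint of the foci so $a_2=-a_1$, and specialize Theorem~\ref{thm:orthcirc} to get the circle $|z|=|a_1|^2$. Your extra remarks (the unnormalized radius $|OF|^2/R$ and the degenerate equilateral case $a_1=0$ with $H=O$) are accurate additions that the paper does not spell out.
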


\begin{proof}
We may assume that $f \in \mathbb C$ and $-f\in \mathbb C$ be the foci of the conic. By Theorem
\ref{thm:orthcirc}, the locus circle $\Gamma$ reduces to
\[
|z|=|f|^2,
\]
which proves the claim.
\end{proof}

\begin{corollary}\label{cor:loc3pons}
Let $\mathcal P$ be the family of triangles inscribed in a circle $\mathcal C$ and circumscribed about a central conic $\mathcal D$. Let $\Gamma$ denote the locus  of the orthocenters of the triangles in $\mathcal P$. Assume that the center of $\mathcal C$ is distinct from the foci of $\mathcal D$. Then  $(\Gamma, \mathcal D)$ forms a 3-Poncelet pair if and only if $(\mathcal C,\mathcal D)$ forms a 3-Poncelet pair.

Moreover, the orthocenters of the triangles inscribed in $\Gamma$ and circumscribed about $\mathcal D$ lies on $\mathcal C$. 
\end{corollary}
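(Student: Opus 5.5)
The plan is to reduce both assertions to Theorem~\ref{thm:orthcirc}, applied once to $\mathcal C$ and once to $\Gamma$. The key observation is that the construction $\mathcal C\mapsto\Gamma$ is an involution: applying it to $\Gamma$ returns $\mathcal C$. First normalize $\mathcal C=\mathbb T$ and let $a_1,a_2$ be the foci of $\mathcal D$. By Theorem~\ref{thm:orthcirc}, $\Gamma$ is the circle with center $s=a_1+a_2$ and radius $r=|a_1a_2|$. The hypothesis that the center of $\mathcal C$ is not a focus gives $r>0$, so $\Gamma$ is a genuine circle.

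Next, renormalize $\Gamma$ to the unit circle by the similarity $w=(z-s)/r$. Under this map the foci of $\mathcal D$ become
\[
b_1=\frac{a_1-s}{r}=-\frac{a_2}{r},\qquad b_2=\frac{a_2-s}{r}=-\frac{a_1}{r}.
\]
Hence $b_1+b_2=-s/r$ and $|b_1b_2|=|a_1a_2|/r^2=1/r$. Suppose $(\Gamma,\mathcal D)$ is a $3$-Poncelet pair. Then Theorem~\ref{thm:orthcirc}, applied in the $w$-plane, says the orthocenters of the corresponding triangles lie on the circle with center $-s/r$ and radius $1/r$. Pulling back by $z=s+rw$, this circle has center $s-s=0$ and radius $1$, which is exactly $\mathcal C$. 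This proves the \emph{moreover} part, once the Poncelet property of $(\Gamma,\mathcal D)$ is known. It also shows that the map ``circle $\mapsto$ orthocenter-locus circle'' for the fixed conic $\mathcal D$ interchanges $\mathcal C$ and $\Gamma$. Note that the new foci $b_1,b_2$ are again distinct from the center $0$ of the normalized $\Gamma$, since $a_1a_2\neq 0$.

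For the equivalence, it therefore suffices by this symmetry to prove one direction: if $(\mathcal C,\mathcal D)$ is $3$-Poncelet, then so is $(\Gamma,\mathcal D)$. The reverse direction follows by exchanging the roles of $\mathcal C$ and $\Gamma$. To prove this direction, I would use a closed-form criterion for a circle and a central conic to form a $3$-Poncelet pair. The natural choice is the generalized Chapple--Euler relation of \cite{Dragovic-Murad2026}, which expresses the Poncelet condition in terms of:
\begin{itemize}
\item the circle's center and radius,
\item the foci,
\item one further parameter of the conic, for instance its semi-major axis (or its real semi-axis for a hyperbola).
\end{itemize}
The steps are then:
\begin{enumerate}
\item Write this relation for the normalized pair $(\mathbb T,\mathcal D)$ with foci $a_1,a_2$.
\item Write the same relation for $(\Gamma,\mathcal D)$ in the $w$-coordinates, with foci $-a_2/r$, $-a_1/r$ and all lengths of $\mathcal D$ scaled by $1/r$.
\item Check that the two relations are equivalent, using $r=|a_1a_2|$.
\end{enumerate}

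An alternative, closer to the spirit of this paper, is to use the converse direction of the Marden-type construction of \cite{Murad2026b}. One would show that, for $\lambda$ in a suitable set, the cubic with coefficients given by \eqref{eq:sigma1}--\eqref{eq:sigma3} for the foci $b_1,b_2$ has all its roots on $\mathbb T$, and that its Marden inconic is the image of $\mathcal D$.

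I expect the main obstacle to be this step: verifying that the Poncelet condition itself, and not merely the locus, is preserved. In particular, one must confirm that the conic inscribed in the new triangles is $\mathcal D$ itself and not just some conic with the same foci. This requires tracking the third parameter of $\mathcal D$ under the rescaling by $r$. I would first try the Chapple--Euler route, because there the check reduces to an algebraic identity in $|a_1|$, $|a_2|$, $|a_1+a_2|$ and the axis length.
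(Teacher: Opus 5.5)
Your proposal is correct and follows the paper's proof. The paper likewise maps $\Gamma$ onto $\mathbb T$ by a similarity, namely $z\mapsto (z-(a_1+a_2))/(a_1a_2)$, which sends the foci to $-1/a_1,-1/a_2$. It then checks that the image of $\mathcal D$ is exactly the conic $\bigl||w-b_1|\pm|w-b_2|\bigr|=|1-\overline{b_1}b_2|$ required by the Poncelet criterion of \cite{Murad2026b}, applies Theorem~\ref{thm:orthcirc}, and pulls back to get $|z|=1$. That criterion is your Chapple--Euler check, and it goes through, since $r\,|1-\overline{b_1}b_2|=|1-\overline{a_1}a_2|$. The remaining differences are cosmetic: you use a real homothety instead of a complex similarity, and you obtain the converse from the involution rather than from the paper's appeal to $T^{-1}$.
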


\begin{proof}
Without loss of generality, assume that $\mathcal C=\mathbb T$, and let
$a_1,a_2$ denote the foci of $\mathcal D$. Since the circumcenter of
$\mathbb T$ is distinct from the foci of $\mathcal D$, we have
$a_1a_2\neq0$.

Suppose first that $(\mathbb T,\mathcal D)$ is a $3$-Poncelet pair.
By Theorem~\ref{thm:orthcirc}, the locus of the orthocenters of the
triangles in $\mathcal P$ is the circle
\[
\Gamma:
|z-(a_1+a_2)|=|a_1a_2|.
\]

Consider the affine transformation $T:\mathbb C\to\mathbb C$, defined by
\[
w=T(z)=\frac{z}{a_1a_2}-\frac{a_1+a_2}{a_1a_2}.
\]
It maps $\Gamma$ onto the unit circle and transforms $\mathcal D$ into the
central conic
\[
T(\mathcal D):
\left|
\left|w+\frac1{a_1}\right|
\pm
\left|w+\frac1{a_2}\right|
\right|
=
\left|
1-\frac1{\overline{a_1}a_2}
\right|,
\]
where the positive sign corresponds to an ellipse and the negative sign to a
hyperbola.

By \cite[Theorem~2.1]{Murad2026b}, the pair
$(T(\Gamma),T(\mathcal D))$ is a $3$-Poncelet pair. Since affine
transformations preserve the Poncelet property, it follows that
$(\Gamma,\mathcal D)$ is also a $3$-Poncelet pair. The converse follows by
applying the same argument to the inverse affine transformation $T^{-1}$.

Finally, let $w_H$ be the orthocenter of a triangle inscribed in $\Gamma$
and circumscribed about $\mathcal D$. Since $(T(\Gamma),T(\mathcal D))$
is a $3$-Poncelet pair with circumcircle $\mathbb T$, Theorem~\ref{thm:orthcirc}
shows that
\[
\left|
w_H+\frac1{a_1}+\frac1{a_2}
\right|
=
\left|
\frac1{a_1a_2}
\right|.
\]
Applying the inverse transformation
\[
z=T^{-1}(w)=a_1a_2\,w+(a_1+a_2)
\]
yields
\[
|z|=1.
\]
Hence the orthocenters of the triangles inscribed in $\Gamma$ and
circumscribed about $\mathcal D$ lie on the original circumcircle
$\mathcal C$.
\end{proof}

\begin{corollary}\label{cor:focusorthocenter}
Let a triangle be circumscribed about a central conic. Then the circumcenter of the triangle coincides with one focus of the conic if and only if the orthocenter coincides with the other
focus.
\end{corollary}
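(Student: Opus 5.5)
We normalize the circumcircle to $\mathbb T$, which we may do by a similarity. Similarities preserve circumcenters, orthocenters and foci, and they map central conics to central conics. Let $a_1,a_2$ be the foci of the inscribed conic. The circumcenter is then the origin $0$, and by \eqref{eq:zHrelation} the orthocenter of any triangle of the family is
\[
z_H=a_1+a_2+\overline{a_1a_2}\,\lambda,\qquad \lambda\in\Lambda\subseteq\mathbb T .
\]
The whole argument reads off both directions of the equivalence from this single identity.

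\emph{Forward direction.} Suppose the circumcenter coincides with a focus, say $a_1=0$ after relabeling. Then $\overline{a_1a_2}=0$, and the identity collapses to $z_H=a_2$. So the orthocenter is exactly the other focus, and this holds for every $\lambda$. In the language of Theorem~\ref{thm:orthcirc}, the locus circle $\Gamma$ degenerates to the point $a_2$, with radius $|a_1a_2|=0$.

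\emph{Converse.} Suppose $z_H=a_2$, so that $a_1+\overline{a_1a_2}\lambda=0$. Taking moduli gives $|a_1|=|a_1|\,|a_2|$, hence either $a_1=0$ or $|a_2|=1$. Only the second case needs work. There $\lambda=-a_1/\overline{a_1a_2}=-a_1a_2/|a_1|^2$, and we must rule it out.

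The plan is to show that a focus on the circumcircle is incompatible with a 3-Poncelet pair inscribed in $\mathbb T$, or at least with the triangle being nondegenerate. The natural tool is Marden's theorem, which underlies Theorem~\ref{thm:sympara}. With $\sigma_3=\lambda$ and $p(z)=(z-z_1)(z-z_2)(z-z_3)$, that parametrization gives
\[
p'(z)=3z^2-2\sigma_1 z+\sigma_2 .
\]
Substituting $|a_2|=1$ and the value of $\lambda$ above, I expect the relations to force one of the following:
\begin{itemize}
\item two vertices coincide;
\item the polynomial factors so that $a_2$ is itself a vertex.
\end{itemize}
Either outcome contradicts a genuine triangle with a tangent inconic. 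If this direct route stalls, a fallback is available: a focus on $\mathbb T$ would make the conic meet or touch the circle, which is impossible for an inconic of a triangle inscribed in $\mathbb T$ when that conic is an ellipse.

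\textbf{Main obstacle.} The hard step is exactly this exclusion of $|a_2|=1$, especially in the hyperbola case where the conic is not contained in the disk. Failing a clean algebraic contradiction, I would add the nondegeneracy hypothesis that no focus lies on the circumcircle, which the paper's central-conic setting appears to assume implicitly. Once that case is excluded, $a_1=0$ follows, and the converse is complete by symmetry in $a_1,a_2$.
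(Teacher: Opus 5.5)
Your argument is the paper's argument. The forward direction is read off from $z_H=a_1+a_2+\overline{a_1a_2}\lambda$, and the converse takes moduli to reach ``$a_1=0$ or $|a_2|=1$''. The paper disposes of $|a_2|=1$ in one line, by citing the fact that a focus of a nondegenerate central conic cannot lie on the circumcircle. That is exactly the step you leave open, and you propose to assume it as an extra hypothesis, so as written your converse is incomplete. Your fallback does not close it. Containment in the disk only handles ellipses lying inside the triangle. The families considered here also contain hyperbolas, and ellipses with both foci outside the circumcircle, which are therefore not inside the triangle.

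The step does close along the lines of your second bullet, by evaluating $p$ itself (not $p'$) at the focus. Substituting \eqref{eq:sigma1}--\eqref{eq:sigma3} gives
\[
p(a_2)=a_2^3-\sigma_1a_2^2+\sigma_2a_2-\lambda=(1-|a_2|^2)\,\lambda\,(\overline{a_1}a_2-1),
\]
so $|a_2|=1$ forces $a_2$ to be a vertex. The two sidelines through that vertex would then be tangent lines of the conic passing through a focus. This is impossible for a nondegenerate ellipse or hyperbola, because the product of the distances from the two foci to any tangent line equals $b^2>0$. Geometrically, \eqref{eq:sigma1} is the isogonal-conjugacy relation $a_1+a_2+\sigma_3\overline{a_1}\,\overline{a_2}=\sigma_1$ for the foci, and a non-vertex point of the circumcircle has its isogonal conjugate at infinity. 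With this, no extra hypothesis is needed.

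There is also a minor slip: for $|a_2|=1$ the relation gives $\lambda=-a_1a_2/\overline{a_1}$, not $-a_1a_2/|a_1|^2$. The latter is not unimodular unless $|a_1|=1$.
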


\begin{proof}
If $a_2=0$, then
\[
z_H=a_1.
\]
Conversely, assume $z_H=a_1$. Equation
\eqref{eq:zHrelation} gives
\[
a_2+\overline{a_1}\,\overline{a_2}\lambda=0.
\]
If $a_2\neq0$, then
\[
\overline{a_1}\lambda\frac{\overline{a_2}}{a_2}=-1,
\]
whose modulus yields
\[
|a_1|=1,
\]
contrary to the fact that a focus of a nondegenerate central conic
cannot lie on the circumcircle. Hence $a_2=0$.
\end{proof}

\begin{corollary}\label{cor:eccen}
Let a triangle be inscribed in a unit circle and circumscribed about a central conic. Assume that the circumcenter of the triangle coincides with one of the foci of the conic. Then the eccentricity of the conic equals the distance from the circumcenter to the other focus. Equivalently,
\[
e=|OH|,
\]
where $e$ denotes the eccentricity, and $O$ and $H$ denote the circumcenter and orthocenter of the triangle, respectively.
\end{corollary}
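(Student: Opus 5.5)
Normalize the circumcircle to $\mathbb T$ and suppose the circumcenter is the focus $a_2=0$. Write $a_1=f$ for the other focus. By Corollary~\ref{cor:focusorthocenter} (or directly from \eqref{eq:zHrelation} with $a_2=0$), the orthocenter is $z_H=f$, so $|OH|=|f|$. The claim $e=|OH|$ therefore reduces to the purely conic-theoretic statement $e=|f|$. That is, the distance between the foci, $2c=|f|$, should equal $e$ times the major axis $2a$. Equivalently, we need to show that the semi-major axis (the real semi-axis, in the hyperbola case) satisfies $a=\tfrac12$.

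To find $a$, I will use the pedal-circle property of a focus. The feet of the perpendiculars from a focus onto the tangent lines of a central conic lie on the auxiliary circle, which is centered at the conic's center $f/2$ with radius $a$. Each side of the triangle is tangent to the conic, so the foot from the focus $0$ to each side line lies on this auxiliary circle.

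Next I compute those feet. The side through $z_j,z_k\in\mathbb T$ has foot from $0$ equal to the side midpoint $(z_j+z_k)/2$, because $0$ is the circumcenter. So the three midpoints $(z_j+z_k)/2=(\sigma_1-z_i)/2$ lie on the circle $|w-f/2|=a$. Since $\sigma_1=z_H=f$ by Theorem~\ref{thm:sympara}, we get
\[
\left|\frac{f-z_i}{2}-\frac f2\right|=\frac{|z_i|}{2}=\frac12 .
\]
This forces $a=\tfrac12$, which is exactly the nine-point circle radius. Hence $c=|f|/2$ and $e=c/a=|f|=|OH|$.

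The main point requiring care is the hyperbola case. I must confirm that the pedal curve of a focus with respect to tangents of a hyperbola is again the circle of radius equal to the real semi-axis $a$, centered at the center. This is classical. I also need $e=c/a$ with $2c$ the focal distance in both cases, and $e>1$ for the hyperbola is then consistent with $|f|>1$. As a cross-check via the paper's own framework, I could instead compare with the conic equation $\bigl||z-a_1|\pm|z-a_2|\bigr|=2a$ used in Corollary~\ref{cor:loc3pons}. Its constant for the transformed pair suggests $2a=|1-\overline{a_1}a_2|$, which equals $1$ when $a_2=0$, giving the same conclusion $a=\tfrac12$.
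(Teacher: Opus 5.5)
Your argument is correct, but it reaches $e=|f|$ by a different route than the paper. The paper substitutes $a_2=0$ and $z_H=a_1$ into the eccentricity formula $e=|a_1-a_2|/|1-\overline{a_1}a_2|$ quoted from \cite[Corollary 1]{Murad2026b}. That formula rests on the earlier paper's identification of the major (transverse) axis length, $2a=|1-\overline{a_1}a_2|$, for every inscribed central conic. You instead derive $a=\tfrac12$ directly in the focal case. Because the circumcenter is a focus, the feet of the perpendiculars from it to the side lines are the side midpoints, and the classical pedal property of a focus places these midpoints on the auxiliary circle.

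You could even drop the appeal to Theorem~\ref{thm:sympara}. The three midpoints are non-collinear, so the auxiliary circle (center $f/2$, radius $a$) must coincide with the nine-point circle (center $z_H/2$, radius $\tfrac12$). This gives $z_H=f$ and $a=\tfrac12$ at once.

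The paper's route buys brevity and generality, since the cited formula gives $e$ for arbitrary foci. Yours is self-contained apart from a classical fact, and it treats ellipses and hyperbolas uniformly. It also proves, along the way, the observation in the Remark following Theorem~\ref{thm:midptdist}: when the circumcenter is a focus, the nine-point circle coincides with the auxiliary circle.
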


\begin{proof}
The eccentricity of the conic with foci $a_1,a_2 \in \mathbb C$ inscribed in a triangle inscribed in $\mathbb T$ is given by (\cite[Corollary 1]{Murad2026b})
\[
e=\frac{|a_1-a_2|}{1-\overline{a_1}a_2}.
\]
Assuming $a_2=0$ and using $z_H=a_1$ (Corollary \ref{cor:focusorthocenter}), we find
\[
e=|a_1|=|z_H|.
\]
This completes the proof.
\end{proof}

\subsection{Families with Constant Orthocenter Modulus}

The orthocenter locus immediately yields information about the
possible types of triangles occurring in a particular Poncelet family.

\begin{proposition}\label{prop:obltri}
The Poncelet family $\mathcal P$ contains at most two right triangles.

Moreover, if the circumcenter of a triangle in $\mathcal P$ coincides either with the center of the conic or with one of its foci, then the triangle is oblique.
\end{proposition}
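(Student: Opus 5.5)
A triangle inscribed in $\mathbb T$ with vertices $z_1,z_2,z_3$ is right-angled exactly when one side is a diameter, i.e.\ when $z_j=-z_k$ for some $j\neq k$. So the plan is to detect right triangles with the symmetric polynomial of Example~\ref{ex:sympol} and then evaluate it through the parametrization.

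\textbf{Algebraic criterion.} The triangle is right-angled if and only if
\[
F(z_1,z_2,z_3)=(z_1+z_2)(z_2+z_3)(z_3+z_1)=0.
\]
By Example~\ref{ex:sympol} and \eqref{eq:sigma1sigma2}, $F=\sigma_1\sigma_2-\sigma_3=(|z_H|^2-1)\lambda$. Since $|\lambda|=1$, the triangle is right exactly when $|z_H|=1$, that is, when its orthocenter lies on $\mathbb T$. This is just the classical fact that the orthocenter of a right triangle is the vertex at the right angle.

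\textbf{At most two right triangles.} By Theorem~\ref{thm:orthcirc} (equivalently \eqref{eq:zHrelation}), $z_H$ ranges over the circle $\Gamma$ with center $a_1+a_2$ and radius $|a_1a_2|$. The map $\lambda\mapsto z_H=a_1+a_2+\overline{a_1a_2}\lambda$ is injective whenever $a_1a_2\neq0$. In that case I will show that $\Gamma\neq\mathbb T$, so that $\Gamma\cap\mathbb T$ has at most two points and hence at most two parameters $\lambda\in\Lambda$ give right triangles.

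\textbf{The main obstacle.} It remains to exclude $\Gamma=\mathbb T$, which would require $a_1+a_2=0$ and $|a_1a_2|=1$, i.e.\ $|a_1|=|a_2|=1$. This contradicts the fact, already used in Corollary~\ref{cor:focusorthocenter}, that a focus of a nondegenerate central conic inscribed in the triangle cannot lie on the circumcircle.

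\textbf{Degenerate case $a_1a_2=0$.} Then $z_H$ is constant, equal to the surviving focus. Its modulus is not $1$, again because a focus is never on $\mathbb T$, so there are no right triangles at all.

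A further subtlety is that one parameter $\lambda$ should correspond to only one triangle. This holds because $\sigma_1,\sigma_2,\sigma_3$ determine the unordered vertex set as the roots of the cubic $z^3-\sigma_1z^2+\sigma_2z-\sigma_3$. Hence there are at most two right triangles.

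\textbf{The special case.} Suppose the circumcenter coincides with the center of the conic ($a_1+a_2=0$) or with a focus ($a_1a_2=0$). By Theorem~\ref{thm:orthocentercriterion}, $|z_H|$ is then constant on the family: it equals $|a_1a_2|=|a_1|^2$ in the first case and $|a_1+a_2|$ (the surviving focus) in the second. In the first case $|a_1|\neq1$ forces $|z_H|\neq1$. In the second case $|z_H|\neq1$ because a focus is not on $\mathbb T$. So $F\neq0$ for every triangle in $\mathcal P$, and every triangle is non-right, i.e.\ oblique.
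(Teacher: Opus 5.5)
Your proof is correct and follows the paper's route: a triangle is right exactly when $|z_H|=1$, the orthocenter moves on the circle $\Gamma$ of Theorem~\ref{thm:orthcirc}, and in the two special cases $|z_H|$ equals $|a_1|^2$ or $|a_1|$, neither of which can be $1$. You are more careful than the paper in three places: you explicitly exclude $\Gamma=\mathbb T$, you check via injectivity of $\lambda\mapsto z_H$ and Vieta that each point of $\Gamma\cap\mathbb T$ comes from at most one triangle, and you obtain the right-angle criterion algebraically from $F=(|z_H|^2-1)\lambda$ instead of quoting the classical fact.
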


\begin{proof}
A triangle is right if and only if its orthocenter lies on its circumcircle. Since the orthocenter moves on a circle, there are at most two intersections with the circumcircle.

Assume that the triangles in $\mathcal P$ are inscribed in $\mathbb T$ and circumscribed about a central conic $\mathcal D$ with foci $a_1,a_2\in \mathbb C$. Suppose $\triangle ABC \in \mathcal P$. By Theorem \ref{thm:orthcirc},
\begin{itemize}
    \item if the circumcenter of $\triangle ABC$ coincides with the center of $\mathcal D$, then
\[
|z_H|=|a_1|^2;
\]
in this case, $|z_H| \neq 1$, since
$|a_1|\neq 1$;
    \item if the circumcenter of $\triangle ABC$ coincides with one focus of $\mathcal D$, say $a_2$, then 
\[
|z_H|=|a_1|;
\]
in this case, $|z_H| \neq 1$, since $|a_1|<1$ if $\mathcal D$ is an ellipse and $|a_1|>1$ if $\mathcal D$ is a hyperbola.
\end{itemize}
In either case, $|z_H| \neq 1$, which proves the claim.
\end{proof}

The preceding proposition immediately gives the following geometric
characterizations.

\begin{corollary}
Let a triangle be circumscribed about an ellipse. Assume that the circumcenter of the triangle coincides with the center of the ellipse. Then the triangle is acute if the foci of the ellipse lie inside the circumcircle of the triangle and obtuse if the foci lie outside the circumcircle.
\end{corollary}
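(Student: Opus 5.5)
Normalize the circumcircle to $\mathbb T$ and place the center of the ellipse at the origin, so that its foci are $f$ and $-f$ with $f\neq 0$ (otherwise the ellipse is a circle and both foci sit at the circumcenter; the same argument still applies). Proposition~\ref{prop:obltri} and Corollary~\ref{cor:orthcirccen} show that the orthocenter satisfies $|z_H|=|f|^2$, which is constant on the family and different from $1$. So the plan is to link the acute/obtuse type of a triangle inscribed in $\mathbb T$ to whether its orthocenter lies inside or outside $\mathbb T$.

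The classical fact we need is this: for a triangle inscribed in a circle, the orthocenter lies strictly inside the circumcircle if and only if the triangle is acute. It lies on the circle if and only if the triangle is right, and strictly outside if and only if it is obtuse. To keep the argument within the complex-number framework, we justify this as follows. For $z_1,z_2,z_3\in\mathbb T$ we have $z_H=z_1+z_2+z_3$, and
\[
|z_H|^2 = 3 + 2\operatorname{Re}(z_1\bar z_2+z_2\bar z_3+z_3\bar z_1)=3+2(\cos 2C+\cos 2A+\cos 2B),
\]
using the inscribed-angle relation between central and inscribed angles. The identity $\cos 2A+\cos 2B+\cos 2C=-1-4\cos A\cos B\cos C$ then gives
\[
|OH|^2=1-8\cos A\cos B\cos C,
\]
which is the Euler-type formula with $R=1$. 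Since at most one angle of a triangle can be non-acute, the sign of $\cos A\cos B\cos C$ decides the type. Hence $|z_H|<1$ exactly for acute triangles and $|z_H|>1$ exactly for obtuse ones.

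Now combine the two facts. If the foci lie inside the circumcircle, then $|f|<1$, so $|z_H|=|f|^2<1$ and the triangle is acute. If the foci lie outside, then $|f|>1$, so $|z_H|>1$ and the triangle is obtuse. The case $|f|=1$ cannot occur, as noted in Proposition~\ref{prop:obltri}.

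The only step requiring care is the orthocenter-position criterion. It is classical, and the trigonometric identity above handles it. The rest follows directly from Corollary~\ref{cor:orthcirccen}.
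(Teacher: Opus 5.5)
Your proof is correct and follows the paper's route. The paper likewise takes $|z_H|=|f|^2$ from Proposition~\ref{prop:obltri} and then cites the classical fact that the orthocenter lies inside (outside) the circumcircle exactly for acute (obtuse) triangles; the only difference is that you actually prove this fact via $|z_H|^2=1-8\cos A\cos B\cos C$, which is the same relation the paper derives later in Theorem~\ref{thm:sina}.
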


\begin{proof}
Since, from Proposition \ref{prop:obltri},
\[
|z_H|=|a_1|^2,
\]
the orthocenter lies inside the circumcircle precisely when
$|a_1|<1$, and outside when $|a_1|>1$. The conclusion follows from
the classical characterization of acute and obtuse triangles.
\end{proof}

\begin{corollary}\label{cor:focus}
Let a triangle be circumscribed about a central conic having the circumcenter of the triangle as one of its foci. Then the triangle is acute if the conic is an ellipse and obtuse if it is a hyperbola.
\end{corollary}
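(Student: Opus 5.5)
We normalize the circumcircle to $\mathbb T$ and put the circumcenter $O$ at one focus of the conic, say $a_2=0$. The plan has three steps: locate the orthocenter exactly, compare $|z_H|$ with the circumradius $1$, and then apply the classical characterization used in the preceding corollary. That characterization says a non-right triangle is acute exactly when its orthocenter lies strictly inside the circumcircle, and obtuse exactly when it lies strictly outside.

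First, by Corollary~\ref{cor:focusorthocenter}, or directly from \eqref{eq:zHrelation} with $a_2=0$, we get $z_H=a_1$. So the orthocenter coincides with the other focus, independently of $\lambda\in\Lambda$. By Proposition~\ref{prop:obltri} the triangle is oblique, so $|z_H|\neq 1$ and only the two strict cases can occur.

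Next, we decide on which side of $\mathbb T$ the point $a_1$ lies, according to the type of conic. Here I would use the geometric facts already invoked in the proof of Proposition~\ref{prop:obltri}: if $\mathcal D$ is an ellipse, then $|a_1|<1$, and if $\mathcal D$ is a hyperbola, then $|a_1|>1$.

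This step is the only real content, and I would justify it rather than merely cite it. For the ellipse, note that an ellipse inscribed in a triangle lies in the closed triangle, and hence in the closed disk bounded by $\mathbb T$. Its foci lie in the interior of the ellipse, so both foci lie in $\mathbb D$, and in particular $|a_1|<1$.

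For the hyperbola, I would argue via the focal-distance definition. The conic is $\bigl||z-a_1|-|z|\bigr|=2a$, where $2a<|a_1|$ because for a hyperbola the difference of focal distances is less than the distance between the foci. The three sides are tangent to it, and the two foci lie on opposite sides of every tangent line. If $a_1$ were inside or on $\mathbb T$, we would have a point $O$ and a point $a_1$ in the closed disk that are separated by all three side lines of an inscribed triangle. That is impossible, since a point of the disk strictly separated from the circumcenter $O$ by a side line must lie in the circular segment cut off by that side, and these three segments are disjoint.

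Alternatively, I would use the eccentricity formula from Corollary~\ref{cor:eccen}, namely $e=|a_1|$. Since $e<1$ for an ellipse and $e>1$ for a hyperbola, this settles both cases at once. I expect this to be the cleanest route, provided that formula is accepted for hyperbolas as well.

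Combining the steps: for an ellipse the orthocenter lies inside the circumcircle, so the triangle is acute; for a hyperbola it lies outside, so the triangle is obtuse.
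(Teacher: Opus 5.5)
Your main line matches the paper's proof. With $a_2=0$ you get $z_H=a_1$, and the corollary then follows from $|a_1|<1$ for an ellipse and $|a_1|>1$ for a hyperbola, together with the inside/outside characterization of acute and obtuse triangles. The paper simply asserts these two focal inequalities. Your eccentricity route is a sound way to justify them. With $a_2=0$, the inconic equation $\bigl||z-a_1|\pm|z-a_2|\bigr|=|1-\overline{a_1}a_2|$ used in the paper becomes $\bigl||z-a_1|\pm|z|\bigr|=1$. Hence $2a=1$ and $2c=|a_1|$, which settles ellipses and hyperbolas at once.

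Your separation argument for the hyperbola, however, contains a false step. The three circular segments are \emph{not} disjoint when the triangle is obtuse, which is exactly the case you are trying to reach. Suppose the angle at $z_1$ is obtuse. Then $O$ and $z_1$ lie on opposite sides of the line $z_2z_3$, so the part of the disk separated from $O$ by that line is the minor segment containing $z_1$. That segment contains the other two segments.

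The conclusion can be rescued. At least two angles, say at $z_2$ and $z_3$, are acute, so the segments attached to the sides $z_3z_1$ and $z_1z_2$ are the outer caps beyond those sides. A common point of these two caps would lie in the wedge of points $w=z_1+s(z_1-z_2)+t(z_1-z_3)$ with $s,t>0$. There $\operatorname{Re}(\overline{z_1}w)=1+s\bigl(1-\operatorname{Re}(\overline{z_1}z_2)\bigr)+t\bigl(1-\operatorname{Re}(\overline{z_1}z_3)\bigr)>1$, so the wedge misses the closed disk.

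Likewise, your ellipse argument presumes the ellipse lies inside the triangle. The paper's setting only requires tangency to the side lines, and it explicitly admits ellipses with both foci outside $\mathbb T$. The equation-based route avoids both issues.
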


\begin{proof}
Since, from Proposition \ref{prop:obltri}
\[
z_H=a_1,
\]
the conclusion follows immediately from $|a_1|<1$ for ellipses and
$|a_1|>1$ for hyperbolas.
\end{proof}

\subsection{Bounds on the Focal Parameters}
The orthocenter parametrization also yields several \,relations between the focal parameters and the geometry of the conic.

The following identity (see Corollary \ref{cor:sumsqdis})
\[
|z_1-z_2|^2+|z_2-z_3|^2+|z_3-z_1|^2
=
9-|z_H|^2
\]
immediately yields bounds on the focal parameters.

\begin{proposition}\label{prop:bounds}
Let a triangle be inscribed in the unit circle and circumscribed about
a central conic.

\begin{enumerate}
\item[(a)]
If the circumcenter coincides with the center of the conic, then
\[
|f|<\sqrt3.
\]
where $f\in \mathbb C$ is a focus of the conic.
\item[(b)]
If the circumcenter coincides with one focus of the conic, then
\[
|f|<3,
\]
where $f\in \mathbb C$ is the other focus of the conic.
\end{enumerate}
\end{proposition}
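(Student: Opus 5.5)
The plan is to combine the identity quoted just before the statement,
\[
|z_1-z_2|^2+|z_2-z_3|^2+|z_3-z_1|^2=9-|z_H|^2,
\]
with the orthocenter formula \eqref{eq:zHrelation}, specialized to each of the two configurations. First I check the identity directly, since it only uses $|z_j|=1$. Expanding $|z_i-z_j|^2=2-2\operatorname{Re}(z_i\overline{z_j})$ gives
\[
\sum|z_i-z_j|^2=6-2\operatorname{Re}\sum_{i<j}z_i\overline{z_j}.
\]
On the other hand,
\[
|\sigma_1|^2=3+2\operatorname{Re}\sum_{i<j}z_i\overline{z_j}.
\]
Adding the two yields $\sum|z_i-z_j|^2=9-|\sigma_1|^2=9-|z_H|^2$. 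The key observation is that the left side is strictly positive for a nondegenerate triangle, so $|z_H|^2<9$, that is, $|z_H|<3$.

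For (a), the circumcenter is the center of the conic, so the foci are $f$ and $-f$. Then $a_1+a_2=0$ and $a_1a_2=-f^2$, and \eqref{eq:zHrelation} gives $z_H=\overline{f}^{\,2}\lambda$. Hence $|z_H|=|f|^2$, as already used in Corollary~\ref{cor:orthcirccen} and Proposition~\ref{prop:obltri}. Combined with $|z_H|<3$, this gives $|f|^2<3$, i.e.\ $|f|<\sqrt3$.

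For (b), take $a_2=0$. Corollary~\ref{cor:focusorthocenter} then gives $z_H=a_1=f$, so $|f|=|z_H|<3$.

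The only delicate point is the strictness of the inequality. It comes from the fact that a triangle in $\mathcal P$ has three distinct vertices, so $\sum|z_i-z_j|^2>0$. Indeed, the sum vanishes only if all three vertices coincide, which is impossible for a triangle circumscribed about a nondegenerate conic. I will state this explicitly. No further obstacle is expected, since both parts reduce to the single estimate $|z_H|<3$.
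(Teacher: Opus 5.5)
Your proposal is correct and follows the paper's proof: both get $|z_H|<3$ from $|z_1-z_2|^2+|z_2-z_3|^2+|z_3-z_1|^2=9-|z_H|^2>0$, then substitute $|z_H|=|f|^2$ in case (a) and $z_H=f$ in case (b). The only slip is a dropped sign in (a), where $a_1a_2=-f^2$ gives $z_H=-\overline{f}^{\,2}\lambda$; this is harmless since only $|z_H|$ is used.
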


\begin{proof}
Since
\[
9-|z_H|^2>0,
\]
we have
\[
|z_H|<3.
\]

In case (a),
\[
|z_H|=|f|^2,
\]
which gives
\[
|f|<\sqrt3.
\]

In case (b),
\[
z_H=f,
\]
and therefore
\[
|f|<3.
\]
\end{proof}

\begin{corollary}
Let a triangle be circumscribed about a hyperbola $\mathcal D$. Assume that the circumcenter of the triangle coincides with one of the foci of $\mathcal D$. Then the eccentricity $e$ of $\mathcal D$ satisfies
\[
1<e<3.
\]
\end{corollary}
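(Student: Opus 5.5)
Place the circumcircle as $\mathbb T$ and put the focus at the circumcenter, so $a_2=0$, and write $a_1=f$ for the other focus. Since $\mathcal D$ is a hyperbola, $|f|>1$; this is the fact already used in Proposition~\ref{prop:obltri} and Corollary~\ref{cor:focus}. The plan is to express the eccentricity in terms of $|f|$ via Corollary~\ref{cor:eccen}, and then apply the bound of Proposition~\ref{prop:bounds}(b).

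First, Corollary~\ref{cor:eccen} (equivalently, \cite[Corollary 1]{Murad2026b} with $a_2=0$) gives
\[
e=\frac{|a_1-a_2|}{|1-\overline{a_1}a_2|}=|f|=|z_H|.
\]
Here the denominator reduces to $1$ because $a_2=0$. The lower bound $e>1$ then follows directly from $|f|>1$. This is consistent with the general fact that every hyperbola has eccentricity exceeding $1$, so no further work is needed there.

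Second, for the upper bound I would invoke Proposition~\ref{prop:bounds}(b). It gives $|f|<3$, which rests on
\[
|z_1-z_2|^2+|z_2-z_3|^2+|z_3-z_1|^2=9-|z_H|^2>0
\]
together with $z_H=f$ from Corollary~\ref{cor:focusorthocenter}. Combining the two steps yields $1<e=|f|<3$.

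The main point needing care is the identification $e=|f|$ when the conic is a hyperbola. One must check that the formula from \cite{Murad2026b} applies to hyperbolas as well as ellipses, since Corollary~\ref{cor:eccen} is stated for central conics. The strict inequality in the upper bound also needs justification. It comes from the triangle being nondegenerate, so the sum of squared side lengths is strictly positive; more concretely, $9-|z_H|^2=0$ would force all three vertices to coincide.
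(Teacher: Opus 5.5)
Your proposal is correct and follows essentially the same route as the paper: $e=|f|$ from Corollary~\ref{cor:eccen}, the upper bound $|f|<3$ from Proposition~\ref{prop:bounds}(b), and $e>1$ because $\mathcal D$ is a hyperbola. Your remarks that the eccentricity formula also holds for hyperbolas and that $9-|z_H|^2>0$ is strict only spell out what the paper's one-line proof leaves implicit.
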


\begin{proof}
Since $e=|f|$ (Corollary \ref{cor:eccen}), the result follows
immediately from Proposition \ref{prop:bounds}.
\end{proof}

We conclude with one final application concerning the congruence between the circumcircle of a triangle circumscribed about a central conic and the circle traced by its orthocenter.

\begin{proposition}
Let $\Gamma$ denote the locus of the orthocenters of the triangles in the Poncelet family $\mathcal P$. If $\Gamma$ is congruent to the common circumcircle of the triangles in $\mathcal P$, then the associated central conic is necessarily a hyperbola.
\end{proposition}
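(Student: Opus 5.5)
Normalize the common circumcircle to $\mathbb T$. Let $a_1,a_2$ be the foci of the central conic. By Theorem~\ref{thm:orthcirc}, $\Gamma$ is the circle with center $a_1+a_2$ and radius $|a_1a_2|$. Congruence with $\mathbb T$ means equal radii, so the hypothesis becomes
\[
|a_1a_2|=1,\qquad\text{i.e.}\qquad |a_1|\,|a_2|=1 .
\]
It remains to show that this condition is incompatible with the conic being an ellipse inscribed in a Poncelet triangle with circumcircle $\mathbb T$. I also need to dispose of the degenerate possibility that a focus lies on $\mathbb T$.

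First, neither focus lies on $\mathbb T$, as already used in the proof of Corollary~\ref{cor:focusorthocenter}. Together with $|a_1||a_2|=1$, this forces one focus strictly inside $\mathbb D$ and the other strictly outside $\overline{\mathbb D}$. The key step is then to invoke the classification of central conics arising from $3$-Poncelet pairs with $\mathbb T$, as summarized in the introduction and \cite{Murad2026b}. For an ellipse, either both foci lie in $\mathbb D$ (the Daepp et al.\ case) or both lie outside the circumcircle. For a hyperbola, exactly one focus lies outside. Hence one focus inside and one outside is possible only for a hyperbola, which proves the proposition.

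The main obstacle is justifying the ellipse half of this dichotomy rigorously, rather than quoting it. My first attempt uses the eccentricity formula from \cite[Corollary 1]{Murad2026b}, $e=|a_1-a_2|/|1-\overline{a_1}a_2|$. The identity
\[
|1-\overline{a_1}a_2|^2-|a_1-a_2|^2=(1-|a_1|^2)(1-|a_2|^2)
\]
shows that $e<1$ exactly when $(1-|a_1|^2)(1-|a_2|^2)>0$, that is, when both foci lie on the same side of $\mathbb T$. Likewise $e>1$ exactly when they lie on opposite sides. Under $|a_1||a_2|=1$ with neither focus on $\mathbb T$, the product is negative, so $e>1$ and the conic is a hyperbola. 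This avoids any case analysis of the Poncelet classification.
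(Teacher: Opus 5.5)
Your proof is correct and shares the paper's reduction: by Theorem~\ref{thm:orthcirc}, congruence of $\Gamma$ with $\mathbb T$ is exactly $|a_1a_2|=1$, so the foci lie on opposite sides of $\mathbb T$.

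The difference is in the final step. The paper simply cites \cite[Theorem~2.2]{Dragovic-Murad2026} for the fact that foci separated by the circumcircle force a hyperbola. Your second paragraph's appeal to the ``classification'' is the same move. Your eccentricity computation, by contrast, actually proves that fact. You use the formula $e=|a_1-a_2|/|1-\overline{a_1}a_2|$ from Corollary~\ref{cor:eccen}, read with the modulus in the denominator as the inconic equation $\bigl||z-a_1|\pm|z-a_2|\bigr|=|1-\overline{a_1}a_2|$ requires. Combined with the identity $|1-\overline{a_1}a_2|^2-|a_1-a_2|^2=(1-|a_1|^2)(1-|a_2|^2)$, this gives the full dichotomy (foci on the same side of $\mathbb T$ iff ellipse) from a formula already in use in this paper.

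It also makes your separate appeal to ``no focus on $\mathbb T$'' unnecessary. Under $|a_1||a_2|=1$ the product equals $-(1-|a_1|^2)^2/|a_1|^2\le 0$, and equality would give $e=1$, which no central conic has.

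The paper's version is shorter but places the geometric content in an external reference. Yours is self-contained and shows exactly why the conic must be a hyperbola.
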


\begin{proof}
The congruence assumption gives
\[
|a_1a_2|=1.
\]
Hence
\[
|a_1||a_2|=1,
\]
so one focus lies inside the unit circle if and only if the other lies
outside. Therefore the conic is a hyperbola. See \cite[Theorem 2.2]{Dragovic-Murad2026}.
\end{proof}

\section{Some Invariants from Symmetric Parametrization}\label{sec:invsympar}

In \cite{Dragovic-Murad2026}, we proved that if a triangle $\triangle ABC$ is circumscribed about a central conic, then the sum of the squared side lengths,
\[
|AB|^{2}+|BC|^{2}+|CA|^{2},
\]
remains invariant throughout the associated Poncelet family if and only if the circumcenter of $\triangle ABC$ coincides either with the center of the conic or with one of its foci. Equivalently, the sum satisfies the orthocenter criterion.

In this section we show that many other geometric quantities associated with the triangles in the Poncelet family, such as squared distances, angles, ratio of areas, and circles associated with orthic triangles, tangential triangles, polar circles satisfy the orthocenter criterion as well.

\subsection{Length Invariants}\label{sec:lenghtinv}

\begin{lemma}
Let $z_1,z_2,z_3\in\mathbb C$. Then
\begin{equation}\label{eq:z1z2z3id}
|z_1-z_2|^2+|z_2-z_3|^2+|z_3-z_1|^2
=
3\bigl(|z_1|^2+|z_2|^2+|z_3|^2\bigr)
-|z_1+z_2+z_3|^2.
\end{equation}
\end{lemma}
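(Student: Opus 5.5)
The identity is a purely algebraic statement about three complex numbers, so the plan is to expand both sides using $|w|^2=w\overline{w}$ and compare them. No property of Poncelet triangles is needed; the lemma holds for arbitrary $z_1,z_2,z_3\in\mathbb C$.

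\textbf{Left-hand side.} For each pair of indices $j\neq k$ we write
\[
|z_j-z_k|^2=|z_j|^2+|z_k|^2-2\operatorname{Re}(z_j\overline{z_k}).
\]
Each $|z_j|^2$ occurs in exactly two of the three pairs. Summing over the pairs $\{1,2\},\{2,3\},\{3,1\}$ therefore gives
\[
2\bigl(|z_1|^2+|z_2|^2+|z_3|^2\bigr)-2\operatorname{Re}\bigl(z_1\overline{z_2}+z_2\overline{z_3}+z_3\overline{z_1}\bigr).
\]

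\textbf{Right-hand side.} Expanding the square of the sum gives
\[
|z_1+z_2+z_3|^2=|z_1|^2+|z_2|^2+|z_3|^2+2\operatorname{Re}\bigl(z_1\overline{z_2}+z_2\overline{z_3}+z_3\overline{z_1}\bigr).
\]
Here we use that $z_j\overline{z_k}+z_k\overline{z_j}=2\operatorname{Re}(z_j\overline{z_k})$. Subtracting this from $3\bigl(|z_1|^2+|z_2|^2+|z_3|^2\bigr)$ yields exactly the expression obtained for the left-hand side, which proves \eqref{eq:z1z2z3id}.

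The only point needing care is keeping the mixed terms consistent on both sides. One must check that each unordered pair $\{j,k\}$ contributes $2\operatorname{Re}(z_j\overline{z_k})$ once, with a minus sign on the left and, after the subtraction, also with a minus sign on the right.

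When $z_1,z_2,z_3\in\mathbb T$, we have $|z_j|=1$, so the right-hand side becomes $9-|z_H|^2$. This is the form used in Proposition \ref{prop:bounds}.
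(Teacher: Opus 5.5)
Your proof is correct and is essentially the paper's argument. Both expand the three squared distances and $|z_1+z_2+z_3|^2$ in terms of $\sum|z_j|^2$ and the same cross term $2\operatorname{Re}(z_1\overline{z_2}+z_2\overline{z_3}+z_3\overline{z_1})$ (the paper writes its conjugate, which has the same real part); the paper then adds the two expansions where you subtract, and your specialization to $\mathbb T$ is the paper's Corollary \ref{cor:sumsqdis}.
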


\begin{proof}
Summing the identity
\[
|z_i-z_j|^2
=
|z_i|^2+|z_j|^2
-2\operatorname{Re}(\overline{z_i} z_j)
\]
for the pair of indices $(i,j)=(1,2)$, $(2,3)$ and $(3,1)$ gives
\begin{equation}\label{eq:z1z2z3expans}
|z_1-z_2|^2+|z_2-z_3|^2+|z_3-z_1|^2
=
2(|z_1|^2+|z_2|^2+|z_3|^2)
-
2\operatorname{Re}
\left(
\overline{z_1}z_2+\overline{z_2}z_3+\overline{z_3}z_1
\right).
\end{equation}
On the other hand,
\begin{equation}\label{eq:z1z2bar}
|z_1+z_2+z_3|^2
=
|z_1|^2+|z_2|^2+|z_3|^2
+
2\operatorname{Re}
\left(
\overline{z_1}z_2+\overline{z_2}z_3+\overline{z_3}z_1
\right).
\end{equation}
Adding \eqref{eq:z1z2z3expans} and \eqref{eq:z1z2bar} yields
\[
|z_1-z_2|^2+|z_2-z_3|^2+|z_3-z_1|^2
+
|z_1+z_2+z_3|^2
=
3(|z_1|^2+|z_2|^2+|z_3|^2),
\]
which is equivalent to \eqref{eq:z1z2z3id}.
\end{proof}

\begin{corollary}\label{cor:sumsqdis}
If $z_1,z_2,z_3\in\mathbb T$, then
\begin{align}
|z_1-z_2|^2+|z_2-z_3|^2+|z_3-z_1|^2
&=9-|z_H|^2,\label{eq:sumsq}\\
|z_1+z_2|^2+|z_2+z_3|^2+|z_3+z_1|^2
&=3+|z_H|^2.\label{eq:possumsq}
\end{align}
\end{corollary}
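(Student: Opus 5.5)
The plan is to apply the preceding Lemma (identity \eqref{eq:z1z2z3id}) directly for \eqref{eq:sumsq}, and to derive \eqref{eq:possumsq} from a companion identity obtained by the same bookkeeping.

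\emph{First identity.} Since $z_1,z_2,z_3\in\mathbb T$, we have $|z_1|^2+|z_2|^2+|z_3|^2=3$. We also identify $z_1+z_2+z_3$ with the orthocenter $z_H$, as fixed in Theorem~\ref{thm:orthocentercriterion}. Substituting both facts into \eqref{eq:z1z2z3id} gives $9-|z_H|^2$ at once.

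\emph{Second identity.} Expanding $|z_i+z_j|^2=|z_i|^2+|z_j|^2+2\operatorname{Re}(\overline{z_i}z_j)$ and summing over the three pairs gives
\[
|z_1+z_2|^2+|z_2+z_3|^2+|z_3+z_1|^2=2\bigl(|z_1|^2+|z_2|^2+|z_3|^2\bigr)+2\operatorname{Re}\left(\overline{z_1}z_2+\overline{z_2}z_3+\overline{z_3}z_1\right).
\]
By \eqref{eq:z1z2bar}, the real-part term equals $|z_1+z_2+z_3|^2-(|z_1|^2+|z_2|^2+|z_3|^2)$. Substituting, the left side becomes $|z_1|^2+|z_2|^2+|z_3|^2+|z_H|^2=3+|z_H|^2$.

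As a cross-check, adding the two displayed identities should give $\sum_{i<j}\bigl(|z_i-z_j|^2+|z_i+z_j|^2\bigr)=\sum_{i<j}2\bigl(|z_i|^2+|z_j|^2\bigr)=12$, which is the parallelogram law, and indeed $(9-|z_H|^2)+(3+|z_H|^2)=12$.

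There is no real obstacle here. The only care needed is keeping the conjugate cross-term sums consistent, so that the quantity $\operatorname{Re}(\overline{z_1}z_2+\overline{z_2}z_3+\overline{z_3}z_1)$ appearing in \eqref{eq:z1z2z3expans} and \eqref{eq:z1z2bar} is matched exactly, using $\operatorname{Re}(\overline{z_i}z_j)=\operatorname{Re}(\overline{z_j}z_i)$.
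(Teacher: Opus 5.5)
Your proof is correct, and for \eqref{eq:sumsq} it is exactly the paper's argument. For \eqref{eq:possumsq} the paper's proof is what you present as the cross-check: the summed parallelogram law gives $12$, and it subtracts \eqref{eq:sumsq}. Your direct expansion via \eqref{eq:z1z2bar} instead proves the slightly more general identity $\sum_{i<j}|z_i+z_j|^2=\sum_k|z_k|^2+|z_1+z_2+z_3|^2$ for arbitrary complex $z_k$, so either route works.
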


\begin{proof}
Since $|z_1|=|z_2|=|z_3|=1$ and $z_H=z_1+z_2+z_3$, identity \eqref{eq:z1z2z3id} immediately gives \eqref{eq:sumsq}. Moreover,
\[
|z_1+z_2|^2+|z_2+z_3|^2+|z_3+z_1|^2
+
|z_1-z_2|^2+|z_2-z_3|^2+|z_3-z_1|^2
=12,
\]
which together with \eqref{eq:sumsq} proves \eqref{eq:possumsq}.
\end{proof}

The preceding corollary immediately yields an alternative proof of the invariance of the sum of the squared lengths established in \cite[Theorem 3.5]{Dragovic-Murad2026}.

\begin{proposition}\label{prop:abcoh}
Let $l\in\mathbb R$ and let $z_1,z_2,z_3\in\mathbb C$. Then
\begin{equation}\label{eq:zlzh}
|z_1+lz_H|^2+|z_2+lz_H|^2+|z_3+lz_H|^2
=
|z_1|^2+|z_2|^2+|z_3|^2
+l(3l+2)|z_H|^2.
\end{equation}
\end{proposition}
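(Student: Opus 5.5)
The identity \eqref{eq:zlzh} is purely algebraic. Here $z_H=z_1+z_2+z_3$ is simply the sum of the three complex numbers, so no Poncelet hypothesis is needed. I will expand each term with the elementary identity
\[
|u+v|^2=|u|^2+|v|^2+2\operatorname{Re}(u\overline{v}),
\]
used already in the proof of \eqref{eq:z1z2z3id}, taking $u=z_k$ and $v=lz_H$ for $k=1,2,3$.

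Since $l$ is real, $\overline{lz_H}=l\,\overline{z_H}$ and $|lz_H|^2=l^2|z_H|^2$. Summing over $k$ therefore gives
\[
\sum_{k=1}^3|z_k+lz_H|^2
=
\sum_{k=1}^3|z_k|^2+3l^2|z_H|^2+2l\operatorname{Re}\Bigl(\overline{z_H}\sum_{k=1}^3 z_k\Bigr).
\]
The key observation is that $\sum_k z_k=z_H$. The cross term thus collapses to $2l\operatorname{Re}(\overline{z_H}z_H)=2l|z_H|^2$. Collecting the last two terms gives $3l^2+2l=l(3l+2)$ as the coefficient of $|z_H|^2$, which is \eqref{eq:zlzh}.

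The only step requiring care is pulling $l$ out of the real part and out of the conjugation, and this is exactly where the hypothesis $l\in\mathbb R$ is used. For complex $l$ the cross term would instead be $2\operatorname{Re}(\overline{l})|z_H|^2$ and the quadratic term would be $3|l|^2|z_H|^2$. There is no genuine obstacle.

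I would close by noting the intended application. For $z_k\in\mathbb T$ the right-hand side becomes $3+l(3l+2)|z_H|^2$. For $l\neq0,-\tfrac23$, this sum therefore satisfies the orthocenter criterion of Theorem~\ref{thm:orthocentercriterion}.
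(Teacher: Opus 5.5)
Your proof is correct and follows the paper's argument exactly: you expand each $|z_k+lz_H|^2$, use $l\in\mathbb R$ to pull $l$ out of the conjugate, and collapse the summed cross term to $2l|z_H|^2$ via $\sum_k z_k=z_H$. Your closing caveat is also correct, and the paper leaves it implicit: the orthocenter criterion needs $l\neq0,-\tfrac23$, since for those two values the sum is identically $3$.
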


\begin{proof}
For each $k=1,2,3$,
\[
|z_k+lz_H|^2
=
|z_k|^2+l^2|z_H|^2
+
2l\operatorname{Re}(\overline{z_k}z_H).
\]
Summing over $k$ gives
\[
\sum_{k=1}^3|z_k+lz_H|^2
=
\sum_{k=1}^3|z_k|^2
+
3l^2|z_H|^2
+
2l\operatorname{Re}\left(\overline{z_H}z_H\right).
\]
Since
\[
\operatorname{Re}(\overline{z_H}z_H)
=
|z_H|^2,
\]
the last sum is equivalent to \eqref{eq:zlzh}.
\end{proof}

\begin{corollary}\label{cor:mediansqsum}
Let $z_1,z_2,z_3\in\mathbb T$. Then
\begin{equation}\label{eq:mediansqsum}
\left|z_1-\frac{z_2+z_3}{2}\right|^2
+
\left|z_2-\frac{z_3+z_1}{2}\right|^2
+
\left|z_3-\frac{z_1+z_2}{2}\right|^2
=
\frac{27}{4}-\frac34|z_H|^2.
\end{equation}
\end{corollary}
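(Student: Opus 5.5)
The plan is to read the left-hand side of \eqref{eq:mediansqsum} as an instance of Proposition~\ref{prop:abcoh}, and then use that $|z_k|=1$. Since $z_H=z_1+z_2+z_3$, each median vector can be rewritten as
\[
z_k-\frac{z_H-z_k}{2}=\frac{3}{2}z_k-\frac12 z_H
=\frac32\left(z_k-\frac13 z_H\right),\qquad k=1,2,3.
\]
So the left-hand side of \eqref{eq:mediansqsum} equals
\[
\frac94\sum_{k=1}^3\left|z_k-\tfrac13 z_H\right|^2 .
\]
This is exactly the setting of Proposition~\ref{prop:abcoh} with $l=-\tfrac13$.

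Next we apply \eqref{eq:zlzh} with $l=-\tfrac13$. The coefficient is $l(3l+2)=-\tfrac13\cdot 1=-\tfrac13$, which gives
\[
\sum_{k=1}^3\left|z_k-\tfrac13 z_H\right|^2=|z_1|^2+|z_2|^2+|z_3|^2-\tfrac13|z_H|^2 .
\]
Because $z_1,z_2,z_3\in\mathbb T$, the first sum equals $3$. Multiplying by $\tfrac94$ then yields
\[
\frac94\left(3-\frac13|z_H|^2\right)=\frac{27}{4}-\frac34|z_H|^2,
\]
which is \eqref{eq:mediansqsum}.

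As a cross-check, the classical identity says the sum of the squared medians equals $\tfrac34$ times the sum of the squared sides. Combined with \eqref{eq:sumsq}, this gives the same result, $\tfrac34(9-|z_H|^2)$.

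There is no real obstacle here. The only step needing care is the algebraic rewriting of the median vector so that it matches the form $z_k+lz_H$ with the correct $l$ and overall factor.
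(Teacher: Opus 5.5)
Your proposal is correct and matches the paper's proof: rewrite each median vector as $\frac32\bigl(z_k-\frac13 z_H\bigr)$, apply Proposition~\ref{prop:abcoh} with $l=-\frac13$, and use $|z_k|=1$. Your cross-check through the classical median identity and \eqref{eq:sumsq} is also valid.
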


\begin{proof}
Since
\[
z_1-\frac{z_2+z_3}{2}
=
\frac32\left(z_1-\frac{z_H}{3}\right),
\]
we obtain
\[
\left|z_1-\frac{z_2+z_3}{2}\right|^2
=
\frac94
\left|z_1-\frac{z_H}{3}\right|^2.
\]
Summing cyclically and applying Proposition \ref{prop:abcoh} with $l=-\frac13$ yields \eqref{eq:mediansqsum}.
\end{proof}

\begin{corollary}
The sum of the squares of the median lengths of a triangle in the family $\mathcal P$ satisfies the orthocenter criterion.
\end{corollary}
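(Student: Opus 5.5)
The plan is to read off the sum of the squared medians directly from Corollary~\ref{cor:mediansqsum} and then invoke Theorem~\ref{thm:orthocentercriterion}. After normalizing the common circumcircle of the family $\mathcal P$ to $\mathbb T$, let $\triangle z_1z_2z_3\in\mathcal P$. The median from $z_1$ joins $z_1$ to the midpoint $\tfrac{z_2+z_3}{2}$ of the opposite side, so its length is $\bigl|z_1-\tfrac{z_2+z_3}{2}\bigr|$, and similarly for the other two. Thus the sum $m_1^2+m_2^2+m_3^2$ of the squared median lengths is exactly the left-hand side of \eqref{eq:mediansqsum}. Corollary~\ref{cor:mediansqsum} then gives
\[
m_1^2+m_2^2+m_3^2=\frac{27}{4}-\frac34|z_H|^2 .
\]

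Next, define $\Phi:\mathbb R_{>0}\to\mathbb R$ by $\Phi(t)=\tfrac{27}{4}-\tfrac34 t^2$. The median sum is $\Phi(|z_H|)$, and $\Phi$ is strictly monotone on $\mathbb R_{>0}$. Hence the median sum is independent of $\lambda\in\Lambda$ if and only if $|z_H|$ is. If $z_H=0$ for some member, one simply works with $|z_H|^2$ directly. Even then, the computation in the proof of Theorem~\ref{thm:orthocentercriterion} shows that $|z_H|^2$ is constant on $\Lambda$ exactly when $a_1+a_2=0$ or $a_1a_2=0$, so the domain restriction causes no trouble.

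It remains to combine these. By Theorem~\ref{thm:orthocentercriterion}, $|z_H|$ is invariant throughout $\mathcal P$ if and only if the circumcenter coincides with the center of the conic or with one of its foci. The median sum therefore satisfies the orthocenter criterion. Alternatively, one can combine \eqref{eq:sumsq} with the classical relation $\sum m_i^2=\tfrac34\sum(\text{sides})^2$.

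The only step needing any care is the converse direction: if the median sum is invariant, one must conclude that $|z_H|$ is invariant. This rests on the injectivity of $t\mapsto t^2$ on $t\ge0$ and on the fact, implicit in the proof of Theorem~\ref{thm:orthocentercriterion}, that $\Lambda$ contains enough values of $\lambda$ for $\operatorname{Re}\bigl((a_1+a_2)a_1a_2\overline{\lambda}\bigr)$ to be nonconstant unless $(a_1+a_2)a_1a_2=0$. I will take that fact from the earlier theorem as given.
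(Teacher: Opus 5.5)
Your proposal is correct and matches the paper's proof: write each median length as $\bigl|z_k-\tfrac{z_{k+1}+z_{k+2}}{2}\bigr|$, apply Corollary~\ref{cor:mediansqsum}, and conclude with Theorem~\ref{thm:orthocentercriterion}. Your remark that $t\mapsto \tfrac{27}{4}-\tfrac34 t^2$ is injective on $t\ge 0$ makes explicit the converse direction (invariance of the median sum forces invariance of $|z_H|$), which the paper leaves implicit.
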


\begin{proof}
The median from the vertex $z_1$ has length
\[
\left|z_1-\frac{z_2+z_3}{2}\right|,
\]
and similarly for the other two vertices. The result therefore follows immediately from Corollary \ref{cor:mediansqsum} together with Theorem \ref{thm:orthocentercriterion}.
\end{proof}

\begin{corollary}
The sum of the areas of the regular convex polygons on the sides of a triangle in $\mathcal P$ satisfies the orthocenter criterion.
\end{corollary}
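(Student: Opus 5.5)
The plan is to reduce the statement to Corollary~\ref{cor:sumsqdis} together with Theorem~\ref{thm:orthocentercriterion}. Fix an integer $n\ge 3$ and erect, on each side of $\triangle z_1z_2z_3\in\mathcal P$, a regular convex $n$-gon having that side as one of its edges. The area of a regular $n$-gon with side length $s$ is
\[
\frac{n s^2}{4}\cot\frac{\pi}{n},
\]
so it is $\kappa_n s^2$ with the constant $\kappa_n=\frac n4\cot\frac\pi n>0$ depending only on $n$. The orientation (outward or inward) does not affect the area, so the sum of the three areas is
\[
\kappa_n\bigl(|z_1-z_2|^2+|z_2-z_3|^2+|z_3-z_1|^2\bigr)=\kappa_n\bigl(9-|z_H|^2\bigr)
\]
by \eqref{eq:sumsq}, after normalizing the circumcircle to $\mathbb T$. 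Scaling the circumcircle multiplies everything by a fixed positive constant, so this normalization is harmless for questions of invariance.

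Next, the sum equals $\Phi(|z_H|)$ with $\Phi(t)=\kappa_n(9-t^2)$. The function $t\mapsto 9-t^2$ is strictly monotone on $\mathbb R_{>0}$, and $\kappa_n\neq 0$. Hence the sum is constant on $\Lambda$ if and only if $|z_H|$ is constant on $\Lambda$. By Theorem~\ref{thm:orthocentercriterion}, this happens exactly when the circumcenter coincides with the center of the conic or with one of its foci. This is precisely the orthocenter criterion.

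There is no real obstacle here; the only care needed is in the final equivalence. The implication from (ii) to constancy of $\Phi(|z_H|)$ follows from Theorem~\ref{thm:orthocentercriterion} for any $\Phi$. The converse, that constancy of the area sum forces (ii), uses the injectivity of $t\mapsto\kappa_n(9-t^2)$ on $t>0$. Without it, the theorem's statement for all $\Phi$ would not directly give the converse. One should also remark that if regular polygons with different numbers of sides are allowed on different sides, the argument breaks down. In that case one only gets a weighted sum of squared sides, so the statement should be read with the same $n$ on all three sides.
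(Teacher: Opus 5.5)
Your proposal is correct and follows the paper's own argument: the formula $\frac{n}{4}s^2\cot\frac{\pi}{n}$ makes the total area a fixed multiple of $|AB|^2+|BC|^2+|CA|^2=9-|z_H|^2$ by \eqref{eq:sumsq}, and Theorem~\ref{thm:orthocentercriterion} then finishes the proof. You also point out that the converse direction needs injectivity of $t\mapsto 9-t^2$ on $t\ge 0$, which the ``for every $\Phi$'' form of Theorem~\ref{thm:orthocentercriterion} does not supply by itself; this makes precise a step the paper leaves implicit.
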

\begin{proof} Let $\triangle ABC \in \mathcal P$. For each vertex $X \in \{A,B,C\}$, let $P_X$ denote the regular convex $n$-gon on the side containing the remaining two vertices of $\triangle ABC$. Then
\[
\operatorname{Area}(P_A)+\operatorname{Area}(P_B)+\operatorname{Area}(P_C)=\frac{1}{4}n\left(|AB|^2+|BC|^2+|CA|^2\right)\cot\left(\frac{\pi}{n}\right)
\]
In particular, the sum of the areas of the outer (equivalently, inner) Napoleon triangles of $\triangle ABC$ satisfies the orthocenter criterion. For further background on Napoleon triangles, see, for example, \cite{Hahn2019}.
\end{proof}

\subsection{Angular Invariants}\label{sec:anginv}

The identities established in the previous subsection immediately yield several invariant angular quantities. We begin with a characterization involving the sums of squared sines and the product of the cosines of the angles.

\begin{theorem}\label{thm:sina}
Let $\triangle ABC \in \mathcal P$. Then the sum
\[
\sin^2A+\sin^2B+\sin^2C
\]
and the product
\[
\cos A\cos B\cos C
\]
satisfy the orthocenter criterion.
\end{theorem}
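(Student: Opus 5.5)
The plan is to express both quantities in terms of $|z_H|^2$ alone. Once that is done, Theorem~\ref{thm:orthocentercriterion} finishes the proof, as in the corollaries of Subsection~\ref{sec:lenghtinv}.

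\textbf{Sum of squared sines.} Normalize the circumcircle to $\mathbb T$, so the circumradius is $R=1$. By the law of sines, $|BC|=2\sin A$, and similarly for the other sides. Hence
\[
\sin^2A+\sin^2B+\sin^2C=\tfrac14\left(|z_1-z_2|^2+|z_2-z_3|^2+|z_3-z_1|^2\right).
\]
By Corollary~\ref{cor:sumsqdis} this equals
\[
\tfrac14\left(9-|z_H|^2\right).
\]
This is a strictly monotone function of $|z_H|$ on the admissible range, since $|z_H|<3$. So the quantity is invariant on $\mathcal P$ exactly when $|z_H|$ is. By Theorem~\ref{thm:orthocentercriterion}, that happens precisely when the circumcenter is the center or a focus of the conic.

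\textbf{Product of cosines.} I will use the classical identity
\[
\sin^2A+\sin^2B+\sin^2C=2+2\cos A\cos B\cos C,
\]
valid whenever $A+B+C=\pi$. It follows from $\cos^2A+\cos^2B+\cos^2C+2\cos A\cos B\cos C=1$. Combining it with the formula above gives
\[
\cos A\cos B\cos C=\frac{9-|z_H|^2}{8}-1=\frac{1-|z_H|^2}{8}.
\]
As a check, this agrees with the known identity $|OH|^2=9R^2-8R^2\cos A\cos B\cos C$. The quantity is again an injective function of $|z_H|$, so it is invariant if and only if $|z_H|$ is. Theorem~\ref{thm:orthocentercriterion} then yields the orthocenter criterion.

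The only delicate point is the converse direction of the criterion. To deduce that $|z_H|$ is constant from invariance of the angular quantity, I need the maps $t\mapsto (9-t^2)/4$ and $t\mapsto(1-t^2)/8$ to be injective on $t\ge 0$. They are, so invariance of either quantity forces constancy of $|z_H|$ on $\Lambda$. The argument in the proof of Theorem~\ref{thm:orthocentercriterion} then applies: $|z_H|^2$ is a nonconstant function of $\lambda$ unless $(a_1+a_2)a_1a_2=0$. I expect no real obstacle beyond verifying the trigonometric identity, which is a standard computation using $C=\pi-A-B$.
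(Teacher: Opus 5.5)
Your proof is correct and follows the paper's route essentially verbatim. You use the law of sines with Corollary~\ref{cor:sumsqdis} to get $\frac{9-|z_H|^2}{4}$, then the identity $\sin^2A+\sin^2B+\sin^2C=2+2\cos A\cos B\cos C$ to get $\frac{1-|z_H|^2}{8}$; your injectivity remark just makes explicit the converse direction that the paper leaves implicit.

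One harmless slip: the ``check'' identity you cite is misremembered. The correct classical formula is $|OH|^2=R^2(1-8\cos A\cos B\cos C)=9R^2-(|AB|^2+|BC|^2+|CA|^2)$, which is exactly what your formula gives. The version $9R^2-8R^2\cos A\cos B\cos C$ already fails for the equilateral triangle.
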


\begin{proof}
We may assume that the common circumcircle of $\mathcal P$ is the unit circle $\mathbb T$. By the law of sines,
\[
\frac{|BC|}{\sin A}
=
\frac{|CA|}{\sin B}
=
\frac{|AB|}{\sin C}
=
2,
\]
and hence
\[
\sin^2A+\sin^2B+\sin^2C
=
\frac{|AB|^2+|BC|^2+|CA|^2}{4}.
\]
Let $z_1,z_2,z_3 \in \mathbb T$ be the complex coordinates of the vertices $A,B,C$, respectively. By Corollary \ref{cor:sumsqdis},
\[
|AB|^2+|BC|^2+|CA|^2
=
9-|z_H|^2,
\]
so that
\[
\sin^2A+\sin^2B+\sin^2C
=
\frac{9-|z_H|^2}{4}.
\]
The trigonometric identity 
\[
\sin^2A+\sin^2B+\sin^2C
=
2+2\cos A\cos B\cos C
\]
for triangles gives
\[
\cos A \cos B \cos C
=
\frac{1-|z_H|^2}{8}.
\]
Therefore both quantities satisfy the orthocenter criterion.
\end{proof}

\begin{corollary}
Let $\triangle ABC$ be inscribed in $\mathbb T$ and circumscribed about a central conic. Assume that the circumcenter of $\triangle ABC$ coincides with one of the foci of the conic. Then
\begin{align*}
    \sin^2A+\sin^2B+\sin^2C&=\frac{9-e^2}{4}\\
    \cos A \cos B \cos C&=\frac{1-e^2}{8}.
\end{align*}
where $e$ is the eccentricity of the conic.
\end{corollary}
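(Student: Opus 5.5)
This corollary is a direct substitution into the two formulas obtained in the proof of Theorem~\ref{thm:sina}. That proof shows that every triangle inscribed in $\mathbb T$ satisfies
\[
\sin^2A+\sin^2B+\sin^2C=\frac{9-|z_H|^2}{4},\qquad
\cos A\cos B\cos C=\frac{1-|z_H|^2}{8}.
\]
These identities use only the law of sines with circumradius $1$ and Corollary~\ref{cor:sumsqdis}. So it suffices to show that $|z_H|=e$ under the present hypothesis.

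Normalize the circumcircle to $\mathbb T$ and label the foci so that the focus at the circumcenter is $a_2=0$. Corollary~\ref{cor:focusorthocenter} then gives $z_H=a_1$; this also follows directly from \eqref{eq:zHrelation}, since $a_2=0$ kills the $\lambda$-term. Corollary~\ref{cor:eccen} gives $e=|a_1|=|z_H|=|OH|$, since setting $a_2=0$ in the eccentricity formula $e=|a_1-a_2|/(1-\overline{a_1}a_2)$ leaves $e=|a_1|$. Substituting $|z_H|^2=e^2$ into the two displayed formulas yields both claimed identities.

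There is essentially no obstacle. The only point to check is that the normalization $a_2=0$ is compatible with the eccentricity formula quoted from \cite{Murad2026b}. That formula is stated for an arbitrary labeling of the foci and for both ellipses and hyperbolas, so no case split is needed. For a hyperbola we get $e=|a_1|>1$, which makes $\cos A\cos B\cos C<0$. This is consistent with Corollary~\ref{cor:focus}, which says the triangle is obtuse in that case, and it serves as a sanity check rather than an additional step.
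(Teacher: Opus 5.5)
Your proof is correct and follows the paper's approach exactly. The paper likewise combines the identities $\sin^2A+\sin^2B+\sin^2C=(9-|z_H|^2)/4$ and $\cos A\cos B\cos C=(1-|z_H|^2)/8$ from the proof of Theorem~\ref{thm:sina} with $e=|z_H|$ from Corollary~\ref{cor:eccen}.
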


\begin{proof}
The result follows immediately from Theorem \ref{thm:sina} together with Corollary \ref{cor:eccen}.
\end{proof}

The next result provides another example of angular invariants associated with the family $\mathcal P$.

\begin{theorem}\label{thm:focanginv}
Let $\triangle ABC \in \mathcal P$ and $\ell$ be a line through the circumcenter $O$ of $\triangle ABC$. Let
\[
\alpha=\measuredangle(\ell,OA),\qquad
\beta=\measuredangle(\ell,OB),\qquad
\gamma=\measuredangle(\ell,OC)
\]
denote the corresponding angles measured counterclockwise. Then the quantity
\[
\cos(\alpha-\beta)+\cos(\beta-\gamma)+\cos(\gamma-\alpha)
\]
satisfy the orthocenter criterion.
\end{theorem}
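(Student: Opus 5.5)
We place the circumcircle as the unit circle $\mathbb T$ with $O$ at the origin, and rotate so that $\ell$ is the real axis. Rotation changes neither the angles $\alpha,\beta,\gamma$ (up to a common additive constant, which cancels in the differences) nor the invariance question. The vertices are then
\[
z_1=e^{i\alpha},\qquad z_2=e^{i\beta},\qquad z_3=e^{i\gamma}.
\]
Observe first that the differences $\alpha-\beta$, $\beta-\gamma$, $\gamma-\alpha$ are independent of the choice of $\ell$. Hence the quantity in question depends only on the triangle, and the line $\ell$ plays no real role.

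The key step is to express the cosine sum through the vertices. For $z_j,z_k\in\mathbb T$ we have
\[
\cos(\theta_j-\theta_k)=\operatorname{Re}\bigl(\overline{z_j}z_k\bigr).
\]
Summing over the three pairs gives
\[
\cos(\alpha-\beta)+\cos(\beta-\gamma)+\cos(\gamma-\alpha)
=\operatorname{Re}\bigl(\overline{z_1}z_2+\overline{z_2}z_3+\overline{z_3}z_1\bigr).
\]
By \eqref{eq:z1z2bar} with $|z_k|=1$, this real part equals $\frac{|z_H|^2-3}{2}$. Equivalently, one can use \eqref{eq:sumsq} together with $|z_j-z_k|^2=2-2\cos(\theta_j-\theta_k)$: summing yields $6-2S=9-|z_H|^2$, where $S$ denotes the cosine sum. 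Either route gives
\[
S=\frac{|z_H|^2-3}{2}.
\]

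Since $S$ is a strictly increasing function of $|z_H|$, it is constant on $\Lambda$ if and only if $|z_H|$ is constant. By Theorem~\ref{thm:orthocentercriterion}, via \eqref{eq:zHrelation}, this happens exactly when the circumcenter coincides with the center or a focus of the conic, which is the orthocenter criterion.

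There is no real obstacle here. The only point needing care is the bookkeeping of orientation: the angles are measured counterclockwise from $\ell$, and cosine is even, so the sign conventions for the differences do not matter.
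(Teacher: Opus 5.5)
Your proof is correct and follows the paper's own argument essentially step for step: you take $\ell$ as the real axis of the unit circle, write the cosine sum as $\operatorname{Re}(\overline{z_1}z_2+\overline{z_2}z_3+\overline{z_3}z_1)$, use \eqref{eq:z1z2bar} to obtain $(|z_H|^2-3)/2$, and conclude by Theorem~\ref{thm:orthocentercriterion}. Your added remarks, that $\ell$ plays no real role and that the sum is strictly increasing in $|z_H|$ (which is what justifies the ``if and only if''), are small clarifications that the paper leaves implicit.
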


\begin{proof}
We may assume that $\triangle ABC$ is inscribed in $\mathbb T$. Choose coordinates so that $\ell$ is the real axis and the perpendicular through $O$ is the imaginary axis. 

Let $z_1,z_2,z_3$ and $z_H$ denote the complex coordinates of the vertices and the orthocenter of $\triangle ABC$, respectively. Then
\[
z_1=e^{i\alpha},\qquad
z_2=e^{i\beta},\qquad
z_3=e^{i\gamma}.
\]
By \eqref{eq:z1z2bar},
\[
|z_1+z_2+z_3|^2
=
3+
2\operatorname{Re}\left(
\overline z_1z_2+
\overline z_2z_3+
\overline z_3z_1
\right).
\]
Since
\[
\operatorname{Re}(\overline z_1z_2)=\cos(\beta-\alpha),\quad
\operatorname{Re}(\overline z_2z_3)=\cos(\gamma-\beta),\quad
\operatorname{Re}(\overline z_3z_1)=\cos(\alpha-\gamma),
\]
we obtain
\[
\cos(\alpha-\beta)+
\cos(\beta-\gamma)+
\cos(\gamma-\alpha)
=
\frac{|z_H|^2-3}{2}.
\]
This proves the claim. 
\end{proof}

\begin{corollary}
    Let $\triangle ABC$ be inscribed in $\mathbb T$ and circumscribed about a central conic $\mathcal D$. Let $\ell$ be the major axis of $\mathcal D$ and $\alpha,\beta,\gamma$ be the angles defined as in Theorem \ref{thm:focanginv}. 
\begin{itemize}
    \item[(a)] If the circumcenter of $\triangle ABC$ coincides with the center of $\mathcal D$, then
\begin{equation}\label{eq:alphabetgammcen}
    \cos(\alpha-\beta)+\cos(\beta-\gamma)+\cos(\gamma-\alpha)
=
\frac{|f|^4-3}{2},
\end{equation}
where $f \in \mathbb C$ is either focus of $\mathcal D$.
\item[(b)] If the circumcenter of $\triangle ABC$ coincides with one of the foci of $\mathcal D$, then
\begin{equation}\label{eq:alphabetgammfoc}
    \cos(\alpha-\beta)+\cos(\beta-\gamma)+\cos(\gamma-\alpha)
=
\frac{|f|^2-3}{2},
\end{equation} 
where $f \in \mathbb C$ is the focus of $\mathcal D$ distinct from $O$.
\end{itemize}
\end{corollary}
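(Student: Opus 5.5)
The corollary is a direct specialization of Theorem \ref{thm:focanginv}. I will substitute the two special positions of the circumcenter into the identity obtained in its proof,
\[
\cos(\alpha-\beta)+\cos(\beta-\gamma)+\cos(\gamma-\alpha)=\frac{|z_H|^2-3}{2}.
\]
That identity holds for any line $\ell$ through $O$, so I may take $\ell$ to be the major axis of $\mathcal D$. The only caveat is that $\ell$ should pass through $O$. This holds automatically in both cases: $O$ is either the center of $\mathcal D$ or one of its foci, and both lie on the focal (major) axis.

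For (a), I normalize the circumcircle to $\mathbb T$ and write the foci as $a_1=f$, $a_2=-f$, as in Corollary \ref{cor:orthcirccen}. Equation \eqref{eq:zHrelation} then gives
\[
z_H=a_1+a_2+\overline{a_1a_2}\lambda=-\overline{f}^{\,2}\lambda,
\]
so $|z_H|=|f|^2$ because $|\lambda|=1$. Substituting $|z_H|^2=|f|^4$ into the identity yields \eqref{eq:alphabetgammcen}. The result does not depend on which focus is called $f$, since $|-f|=|f|$.

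For (b), I take $a_2=0$ and $a_1=f$. Corollary \ref{cor:focusorthocenter} gives $z_H=f$, hence $|z_H|^2=|f|^2$, and substituting this gives \eqref{eq:alphabetgammfoc}.

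There is no real obstacle in either case. The one point that needs care is that the argument must not depend on the choice of coordinate axis. I would note explicitly that each difference of angles, such as $\alpha-\beta$, is the angle between the rays $OA$ and $OB$ modulo $2\pi$, so the sum of cosines is independent of $\ell$. The particular choice of the major axis therefore affects only the presentation, not the value.
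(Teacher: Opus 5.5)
Your proposal is correct and follows essentially the same route as the paper. You specialize the identity $\cos(\alpha-\beta)+\cos(\beta-\gamma)+\cos(\gamma-\alpha)=\frac{|z_H|^2-3}{2}$ from Theorem~\ref{thm:focanginv} using $|z_H|=|f|^2$ in the center case, via \eqref{eq:zHrelation} with $a_2=-a_1$, and $z_H=f$ in the focus case, via Corollary~\ref{cor:focusorthocenter}. Your added observation that the sum of cosines does not depend on the choice of $\ell$ at all is correct and worthwhile, and the paper leaves it implicit.
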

\begin{proof}
If $\ell$ is the major axis of $\mathcal D$, then  
\[
|z_H|=|f|^2
\]
when the circumcenter of $\triangle ABC$ coincides with the center of $\mathcal D$, and
\[
|z_H|=|f|
\]
when the circumcenter of $\triangle ABC$ coincides with one of the foci of $\mathcal D$ (cf. Theorem \ref{thm:orthcirc} and \,Corollary \ref{cor:orthcirccen}). 

These prove \eqref{eq:alphabetgammcen} and \eqref{eq:alphabetgammfoc}, respectively.    
\end{proof}
\subsection{Orthic Triangles}\label{sec:orthictri}

Throughout this subsection, $H$ denotes the orthocenter of a triangle $\triangle ABC$, and
$H_A$, $H_B$, and $H_C$ denote the feet of the altitudes from $A$, $B$, and $C$, respectively.

\begin{proposition}\label{prop:orthiccoord}
Let $\triangle z_1z_2z_3$ be inscribed in the unit circle $\mathbb T$, and let $w_1,w_2,w_3\in \mathbb C$ be the feet of the altitudes from $z_1,z_2,z_3$ respectively. If $z_H=z_1+z_2+z_3$, then
\[
w_k
=
\frac12\left(z_H-\overline{z_k}z_{k+1}z_{k+2}\right), \qquad k=1,2,3
\]
where the indices are taken modulo $3$.
\end{proposition}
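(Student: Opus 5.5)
The plan is to use the standard complex-coordinate formula for the foot of a perpendicular onto a chord of the unit circle. Fix $k$ and write $z_k=z_1$, $z_{k+1}=z_2$, $z_{k+2}=z_3$ for readability. The foot $w_1$ of the altitude from $z_1$ is the orthogonal projection of $z_1$ onto the line through $z_2,z_3$.

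First, I will use the fact that for $z_2,z_3\in\mathbb T$ the chord through $z_2,z_3$ is the set
\[
z+z_2z_3\overline{z}=z_2+z_3 .
\]
This follows from $\overline{z_j}=1/z_j$ and the condition that $(z-z_2)/(z_3-z_2)$ is real. The direction of the chord is $z_3-z_2$, so the perpendicular direction is $i(z_3-z_2)$. The condition that $w-z_1$ is perpendicular to the chord reads
\[
\frac{w-z_1}{z_3-z_2}\in i\mathbb R .
\]
Conjugating and using $\overline{z_3}-\overline{z_2}=-(z_3-z_2)/(z_2z_3)$, this becomes
\[
w-z_1=z_2z_3(\overline{w}-\overline{z_1}) .
\]

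Next, I will eliminate $\overline{w}$. The chord equation gives $z_2z_3\overline{w}=z_2+z_3-w$. Substituting into the perpendicularity relation yields
\[
2w=z_1+z_2+z_3-z_2z_3\overline{z_1},
\]
that is,
\[
w_1=\tfrac12\bigl(z_H-\overline{z_1}z_2z_3\bigr).
\]
Cyclic relabeling then gives the formula for every $k$, with indices taken modulo $3$.

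The only step requiring care is the sign bookkeeping in the perpendicularity condition: one must check that "purely imaginary" translates to $w-z_1=+z_2z_3(\overline w-\overline{z_1})$ and not to the analogous relation with a minus sign, which would characterize parallelism. I will verify this by confirming that the parallel version, $w-z_2=-z_2z_3(\overline w-\overline{z_2})$, indeed reproduces the chord equation. As a final sanity check, in the equilateral case $z_H=0$ the formula gives $w_1=-\tfrac12\overline{z_1}z_2z_3=-\tfrac12 z_1$ (since $z_2z_3=z_1^2$), which is correctly the midpoint of the opposite side.
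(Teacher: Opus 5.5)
Your proof is correct and follows essentially the paper's route: both combine the perpendicularity condition $\frac{w_1-z_1}{z_3-z_2}\in i\mathbb R$, rewritten using $\overline{z_k}=1/z_k$, with the requirement that $w_1$ lies on the line through $z_2,z_3$. The only difference is how the second condition is encoded: the paper uses a real parameter $w_1=(1-t)z_2+tz_3$, while you use the chord equation $z+z_2z_3\overline{z}=z_2+z_3$, which lets you eliminate $\overline{w}$ explicitly where the paper only says ``solving''.
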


\begin{proof}
Since $w_1$ lies on the side $[z_2,z_3]$, there exists a real
parameter $t$ such that
\begin{equation}\label{eq:wtz}
    w_1=(1-t)z_2+tz_3.
\end{equation}
Moreover, $[z_1,w_1]$ is perpendicular to $[z_2,z_3]$, so
\[
\frac{w_1-z_1}{z_3-z_2}\in i\mathbb R.
\]
Equivalently,
\[
\frac{w_1-z_1}{z_3-z_2}
=
-\frac{\overline{w_1}-\overline{z_1}}
{\overline{z_3}-\overline{z_2}}.
\]
Using $|z_k|=1$, so that $\overline{z_k}=1/z_k$, and solving together with the linear relation \eqref{eq:wtz}, we obtain
\[
w_1=\frac12\left(z_1+z_2+z_3-\overline{z_1}z_2z_3\right).
\]
Since $z_H=z_1+z_2+z_3$, this becomes
\[
w_1=\frac12\left(z_H-\overline{z_1}z_2z_3\right).
\]
The remaining formulas follow by cyclic permutation of the indices.
\end{proof}

\begin{theorem}\label{thm:AH_HHA}
Let $\triangle ABC\in\mathcal P$, and let $H$ denote its orthocenter. Then
\[
|AH|\,|HH_A|
=
|BH|\,|HH_B|
=
|CH|\,|HH_C|.
\]
Moreover, this common value satisfies the orthocenter criterion.
\end{theorem}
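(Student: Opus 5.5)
We prove the equality of the three products and identify the common value by direct computation in complex coordinates. The plan is to use Proposition \ref{prop:orthiccoord} to write each foot of an altitude explicitly. We normalize the circumcircle to $\mathbb T$ and write $z_1,z_2,z_3$ for $A,B,C$. Then $H=z_H=\sigma_1$, and
\[
H_A=w_1=\tfrac12\bigl(z_H-\overline{z_1}z_2z_3\bigr).
\]
Hence
\[
z_H-w_1=\tfrac12\bigl(z_H+\overline{z_1}z_2z_3\bigr)=\tfrac12\,\overline{z_1}\bigl(z_1z_H+z_2z_3\bigr),
\]
using $\overline{z_1}z_1=1$. Since $|\overline{z_1}|=1$, we get
\[
|HH_A|=\tfrac12\,\bigl|z_1z_H+z_2z_3\bigr|.
\]

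The key algebraic observation is a factorization. We have
\[
z_1z_H+z_2z_3=z_1^2+z_1z_2+z_1z_3+z_2z_3=(z_1+z_2)(z_1+z_3).
\]
Similarly,
\[
|AH|=|z_H-z_1|=|z_2+z_3|.
\]
Therefore
\[
|AH|\,|HH_A|=\tfrac12\,|z_1+z_2|\,|z_2+z_3|\,|z_3+z_1|=\tfrac12\,|F(z_1,z_2,z_3)|,
\]
where $F$ is the symmetric polynomial of Example \ref{ex:sympol}. This expression is symmetric in the indices, so the same computation for $B$ and $C$ (by cyclic permutation) gives the identical value. That proves the three products are equal.

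For the invariance statement, we invoke Example \ref{ex:sympol}, which gives $|F|=\bigl||z_H|^2-1\bigr|$. The common value is therefore
\[
\tfrac12\bigl|\,|z_H|^2-1\bigr|.
\]
This is a function $\Phi(|z_H|)$ of $|z_H|$, so by Theorem \ref{thm:orthocentercriterion} it is invariant whenever the circumcenter is the center or a focus of the conic.

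The converse direction needs care. Theorem \ref{thm:orthocentercriterion}(i) concerns all $\Phi$, so it does not directly apply to our particular $\Phi(t)=\tfrac12|t^2-1|$. We argue directly instead. If $\bigl||z_H|^2-1\bigr|$ is constant on $\Lambda$, then $|z_H|^2$ takes at most two values $c$ and $2-c$. By the expansion in the proof of Theorem \ref{thm:orthocentercriterion}, $|z_H|^2$ is a real-analytic function of $\lambda$ on $\Lambda$. Since $\Lambda$ is an arc, or a set containing infinitely many points in a connected component, $|z_H|^2$ must in fact be constant. By Theorem \ref{thm:orthocentercriterion} this forces $a_1+a_2=0$ or $a_1a_2=0$.

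The main obstacle is this last step: justifying that $\Lambda$ is large enough (infinite, with accumulation) for continuity to rule out switching between the two values. We would argue that $\Lambda$ is the image of the Poncelet family under a continuous map, so it is a nondegenerate connected arc of $\mathbb T$.
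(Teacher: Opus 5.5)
Your proof is correct and is essentially the paper's argument: the same formula for $H_A$ from Proposition \ref{prop:orthiccoord}, the same factorization $z_1z_H+z_2z_3=(z_1+z_2)(z_1+z_3)$ giving the common value $\tfrac12|z_1+z_2|\,|z_2+z_3|\,|z_3+z_1|=\tfrac12\bigl||z_H|^2-1\bigr|$, and then Example \ref{ex:sympol} together with Theorem \ref{thm:orthocentercriterion}. Your extra care with the converse fills a step the paper skips, since it only cites Theorem \ref{thm:orthocentercriterion}. The obstacle you flag is milder than you fear, and you do not need $\Lambda$ to be connected. When $B=(a_1+a_2)a_1a_2\neq0$, the function $|z_H|^2=A+2\operatorname{Re}(B\overline{\lambda})$ takes each value at most twice on $\mathbb T$. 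So it suffices that $\Lambda$ has at least five points. This holds because $\lambda=\sigma_3$ determines $\sigma_1,\sigma_2$ and hence the triangle, while the Poncelet family is infinite.
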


\begin{proof}
We may assume that $\triangle ABC$ is inscribed in the unit circle $\mathbb T$. Let the complex coordinates of the vertices be $z_1,z_2,z_3$. Then
\[
z_H-z_1=z_2+z_3,
\]
so that
\[
|AH|=|z_2+z_3|.
\]
By Proposition \ref{prop:orthiccoord},
\[
w_1
=
\frac12\left(z_H-\overline{z_1}z_2z_3\right).
\]
Hence
\[
z_H-w_1
=
\frac12\left(z_H+\overline{z_1}z_2z_3\right).
\]
Since $|z_1|=1$, we have $\overline{z_1}=1/z_1$, and therefore
\[
z_H+\overline{z_1}z_2z_3
=
\frac{(z_1+z_2)(z_1+z_3)}{z_1}.
\]
Taking absolute values gives
\[
|HH_A|
=
\frac12|z_1+z_2|\,|z_1+z_3|.
\]
Consequently,
\[
|AH|\,|HH_A|
=
\frac12
|z_1+z_2|
|z_2+z_3|
|z_3+z_1|.
\]
Similarly,
\[
|BH|\,|HH_B|=|CH|\,|HH_C|=
\frac12
|z_1+z_2|
|z_2+z_3|
|z_3+z_1|.
\] 
By Example \ref{ex:sympol},
\[
|z_1+z_2|
|z_2+z_3|
|z_3+z_1|
=
\bigl||z_H|^2-1\bigr|,
\]
and the conclusion follows from Theorem \ref{thm:orthocentercriterion}.
\end{proof}

\begin{theorem}
Let the triangle $\triangle z_1z_2z_3$ be inscribed in $\mathbb T$ and circumscribed about a central conic. Then the centroid of the orthic triangle of $\triangle z_1z_2z_3$ lies on the circle
\[
\left|z-\frac56z_H\right|
=
\frac{|z_H|^2}{6},
\]
where $z_H$ denotes the complex coordinate of the orthocenter of $\triangle z_1z_2z_3$.
\end{theorem}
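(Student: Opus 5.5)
The plan is to compute the centroid of the orthic triangle explicitly from Proposition~\ref{prop:orthiccoord}, then rewrite it with the symmetric parametrization in the form \eqref{eq:sigma1sigma2}. Adding the three formulas for the feet of the altitudes gives
\[
\frac{w_1+w_2+w_3}{3}
=
\frac16\Bigl(3z_H-\sum_{k=1}^3\overline{z_k}\,z_{k+1}z_{k+2}\Bigr).
\]
Since $|z_k|=1$, we have $\overline{z_k}\,z_{k+1}z_{k+2}=\sigma_3/z_k^2=\sigma_3\,\overline{z_k}^{\,2}$. The sum therefore equals
\[
\sigma_3\,\overline{z_1^2+z_2^2+z_3^2}=\sigma_3\,\overline{\sigma_1^2-2\sigma_2}.
\]

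Next I substitute \eqref{eq:sigma1sigma2}, namely $\sigma_1=z_H$, $\sigma_2=\overline{z_H}\,\lambda$ and $\sigma_3=\lambda$. Using $|\lambda|=1$, so that $\lambda\overline{\lambda}=1$, this gives
\[
\sigma_3\,\overline{\sigma_1^2-2\sigma_2}
=\lambda\bigl(\overline{z_H}^{\,2}-2z_H\overline{\lambda}\bigr)
=\lambda\,\overline{z_H}^{\,2}-2z_H .
\]
Inserting this into the expression for the centroid $G'$ yields
\[
G'=\frac{3z_H+2z_H-\lambda\,\overline{z_H}^{\,2}}{6}
=\frac56 z_H-\frac{\lambda\,\overline{z_H}^{\,2}}{6}.
\]

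Taking moduli, and using $|\lambda|=1$ again, gives $\bigl|G'-\tfrac56 z_H\bigr|=|z_H|^2/6$, which is the claim. Combined with Theorem~\ref{thm:orthcirc}, this also describes how the centroid moves with the family.

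The only step needing care is the identity $\sum_k \overline{z_k}\,z_{k+1}z_{k+2}=\sigma_3\,\overline{\sigma_1^2-2\sigma_2}$ and the correct conjugation of $\sigma_2=\overline{z_H}\lambda$. I would check this against a sample triangle, for example an equilateral one, where $z_H=0$ and the centroid of the orthic triangle is the origin.
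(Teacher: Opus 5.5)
Your proposal is correct and matches the paper's proof essentially step for step. Both arguments sum the altitude-foot formulas from Proposition~\ref{prop:orthiccoord}, rewrite the sum as $\sigma_3\,\overline{\sigma_1^2-2\sigma_2}$, and substitute $\sigma_1=z_H$, $\sigma_2=\overline{z_H}\lambda$, $\sigma_3=\lambda$ to obtain the centroid $\frac16\bigl(5z_H-\lambda\,\overline{z_H}^{\,2}\bigr)$. Note that $\sigma_2=\sigma_3\overline{\sigma_1}$ holds for every triangle inscribed in $\mathbb T$, so neither your argument nor the paper's actually uses the inconic hypothesis here.
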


\begin{proof}
Let $w_k$ be the complex coordinate of the foot of altitude from $z_k$, $k=1,2,3$. 

By Proposition \ref{prop:orthiccoord},
\[
w_k
=
\frac12
\left(
z_H-\overline{z_k}z_{k+1}z_{k+2}
\right),
\]
where the indices are taken modulo $3$. Therefore,
\[
w_1+w_2+w_3
=
\frac12
\left(
3z_H-
\left(
\overline{z_1}z_2z_3
+\overline{z_2}z_3z_1
+\overline{z_3}z_1z_2
\right)
\right).
\]

Let $\lambda=z_1z_2z_3$. Then
\[
\overline{z_1}z_2z_3
+\overline{z_2}z_3z_1
+\overline{z_3}z_1z_2
=
\left(\overline{z_1}^{\,2}
+\overline{z_2}^{\,2}
+\overline{z_3}^{\,2}\right) \lambda.
\]
Using the identity
\[
\overline{z_1}^{\,2}
+\overline{z_2}^{\,2}
+\overline{z_3}^{\,2}
=
\overline{\sigma_1}^{\,2}
-
2\overline{\sigma_2}
=
\overline{z_H}^{\,2}
-
2z_H\overline{\lambda},
\]
we obtain
\[
\overline{z_1}z_2z_3
+\overline{z_2}z_3z_1
+\overline{z_3}z_1z_2
=
\overline{z_H}^{\,2}\lambda
-
2z_H.
\]
Hence the centroid $w$ of the orthic triangle satisfies
\[
w
=
\frac{w_1+w_2+w_3}{3}
=
\frac16
\left(
5z_H-\overline{z_H}^{\,2}\lambda
\right).
\]
Hence,
\[
w-\frac56z_H
=
-\frac16\overline{z_H}^{\,2}\lambda.
\]
Since $|\lambda|=1$, it follows that
\[
\left|w-\frac56z_H\right|
=
\frac16|\overline{z_H}|^2
=
\frac16|z_H|^2,
\]
which proves the theorem.
\end{proof}

In \cite{Dragovic-Murad2026}, we established that the area of a Poncelet triangle is invariant precisely when the associated central conic is a circle. The next theorem shows an invariance for the ratio of the areas of a triangle and its orthic triangle. This invariance is characterized by the same orthocenter criterion developed in Theorem \ref{thm:orthocentercriterion}.

\begin{theorem}\label{thm:orthicarearatio}
The ratio of the area of a triangle in $\mathcal P$ to the area of its orthic triangle satisfies the orthocenter criterion.
\end{theorem}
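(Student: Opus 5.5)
The plan is to express the area ratio through the angles of the triangle and then invoke Theorem~\ref{thm:sina}. Classically, the orthic triangle of $\triangle ABC$ with circumradius $R$ has area
\[
\operatorname{Area}(H_AH_BH_C)=2\,|\cos A\cos B\cos C|\,\operatorname{Area}(ABC),
\]
with the absolute value covering the obtuse case, where the orthic triangle is the orthic triangle of an acute triangle having $H$ as a vertex. Consequently
\[
\frac{\operatorname{Area}(ABC)}{\operatorname{Area}(H_AH_BH_C)}=\frac{1}{2|\cos A\cos B\cos C|}.
\]
By Theorem~\ref{thm:sina}, $\cos A\cos B\cos C=(1-|z_H|^2)/8$. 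The ratio is therefore
\[
\frac{4}{\bigl|1-|z_H|^2\bigr|},
\]
which is well defined for non-right triangles. By Proposition~\ref{prop:obltri}, at most two triangles of $\mathcal P$ are right, and none are when the criterion holds.

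To stay within the paper's framework I would verify the area formula directly with complex coordinates rather than cite it. Place the triangle in $\mathbb T$ and use Proposition~\ref{prop:orthiccoord}:
\[
w_k=\tfrac12\bigl(z_H-\lambda\overline{z_k}^{\,2}\bigr),\qquad \lambda=z_1z_2z_3 .
\]
Up to translation by $z_H/2$, the orthic triangle is thus the image of the triangle with vertices $\overline{z_k}^{\,2}$, scaled by $-\lambda/2$. The signed area of $\triangle w_1w_2w_3$ is
\[
\tfrac{1}{4}\operatorname{Im}\sum_k \overline{w_k}w_{k+1}\cdot(\text{const}),
\]
which reduces to $\tfrac14$ times the signed area of $\triangle \overline{z_1}^{2}\overline{z_2}^{2}\overline{z_3}^{2}$. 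The signed area of a triangle inscribed in $\mathbb T$ is $\frac{i}{4}\,\frac{(u_1-u_2)(u_2-u_3)(u_3-u_1)}{u_1u_2u_3}$. Applying this with $u_k=z_k^2$ (conjugation only flips the sign) gives a factor
\[
\frac{(z_1+z_2)(z_2+z_3)(z_3+z_1)}{z_1z_2z_3}
\]
times the corresponding expression for $\triangle z_1z_2z_3$.

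Taking absolute values and using Example~\ref{ex:sympol}, the ratio of the orthic area to the original area becomes
\[
\tfrac14\,|(z_1+z_2)(z_2+z_3)(z_3+z_1)|=\tfrac14\bigl||z_H|^2-1\bigr|,
\]
matching the trigonometric computation. The conclusion then follows from Theorem~\ref{thm:orthocentercriterion}: $4/\bigl||z_H|^2-1\bigr|$ is invariant if and only if $|z_H|$ is, since $t\mapsto 4/|t^2-1|$ is injective on each component of $[0,\infty)\setminus\{1\}$. If $|z_H|$ crosses the value $1$ within the family, non-injectivity could in principle occur. However, the ratio being constant forces $|z_H|^2$ to take at most two values $1\pm c$. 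By the formula $|z_H|^2=|a_1+a_2|^2+|a_1a_2|^2+2\operatorname{Re}(\cdots\overline\lambda)$, $|z_H|^2$ varies continuously in $\lambda$, so on a connected arc of $\Lambda$ it must be constant.

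The main obstacle is the bookkeeping in the signed-area computation for the squared vertices, in particular getting the factor $\tfrac14$ right. A second difficulty is handling the degenerate case $|z_H|=1$ (right triangles) and the injectivity and connectedness point about $\Lambda$. For the latter I would note that $\Lambda$ consists of arcs on which the Poncelet family varies continuously, or else simply argue that $|z_H|$ is real-analytic in $\lambda$.
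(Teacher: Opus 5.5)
Your proof is correct, but it reaches the key identity by a different route than the paper. The paper stays with Proposition~\ref{prop:orthiccoord}. It computes the orthic side lengths $|w_1-w_2|=\tfrac12|z_1+z_2|\,|z_1-z_2|$ (and cyclically), then compares areas through $abc/(4R)$, implicitly using that the orthic circumradius is the nine-point radius $\tfrac12$. This gives $\operatorname{Area}(H_AH_BH_C)/\operatorname{Area}(ABC)=\tfrac14|z_1+z_2|\,|z_2+z_3|\,|z_3+z_1|$, and Example~\ref{ex:sympol} finishes.

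Your main line instead quotes $\operatorname{Area}(H_AH_BH_C)=2|\cos A\cos B\cos C|\operatorname{Area}(ABC)$ and feeds it into Theorem~\ref{thm:sina}. This is shorter, but the classical formula is itself proved from the paper's ingredients (orthic sides $a|\cos A|$, $b|\cos B|$, $c|\cos C|$ and circumradius $R/2$). So it outsources the geometry rather than replacing it.

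Your complex check is the more interesting part. Writing $w_k=\tfrac12 z_H-\tfrac{\lambda}{2}\overline{z_k}^{\,2}$ shows that the orthic triangle is the image of $\triangle\overline{z_1}^{\,2}\overline{z_2}^{\,2}\overline{z_3}^{\,2}$ under an orientation-preserving similarity of ratio $\tfrac12$. That is all your display with ``$(\text{const})$'' needs to assert; as written it is garbled, since the signed area is $\tfrac12\operatorname{Im}\sum_k\overline{w_k}w_{k+1}$. Since $(z_1+z_2)(z_2+z_3)(z_3+z_1)/(z_1z_2z_3)=\sigma_1\overline{\sigma_1}-1$, the signed-area formula on $\mathbb T$ then gives the \emph{signed} identity $\operatorname{Area}_{\pm}(H_AH_BH_C)=\tfrac14\bigl(1-|z_H|^2\bigr)\operatorname{Area}_{\pm}(ABC)=2\cos A\cos B\cos C\,\operatorname{Area}_{\pm}(ABC)$. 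This avoids the nine-point circle and removes the absolute value. It also shows that the orientation flips exactly for obtuse triangles, so no acute/obtuse case split is needed.

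Finally, you handle a point the paper passes over. Since $t\mapsto 4/|t^2-1|$ is not injective, Theorem~\ref{thm:orthocentercriterion} by itself only yields ``criterion $\Rightarrow$ invariance'', and the converse needs an extra argument. Your continuity argument works, but you can drop the claim that $\Lambda$ contains arcs. If $B=(a_1+a_2)a_1a_2\neq 0$, then $|z_H|^2=|a_1+a_2|^2+|a_1a_2|^2+2\operatorname{Re}(B\bar\lambda)$ takes each value at most twice on $\mathbb T$. Taking only the two values $1\pm c$ would then force $|\Lambda|\le 4$. But $\Lambda$ is infinite, because by Theorem~\ref{thm:sympara} $\lambda$ determines $\sigma_1,\sigma_2,\sigma_3$ and hence the triangle.
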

\begin{proof} Let $\triangle z_1z_2z_3\in\mathcal P$, and let $w_1,w_2,w_3$ denote the feet of altitudes from $z_1,z_2,z_3$.

By Proposition \ref{prop:orthiccoord}
\[
w_k=\frac{1}{2}(z_H-\overline{z_k}z_{k+1}z_{k+2})
\]
where the indices are taken modulo 3. This gives
    \begin{align*}
    |w_1-w_2|=\frac{1}{2}|z_1+z_2||z_1-z_2|.
\end{align*}
The remaining two identities follow cyclically.

This gives
\begin{align*}
    \frac{\operatorname{Area}(\triangle H_AH_BH_C)}{\operatorname{Area}(\triangle ABC)}&=2\frac{|w_1-w_2||w_2-w_3||w_3-w_1|}{|z_1-z_2||z_2-z_3||z_3-z_1|}\\
    &=\frac{1}{4}|z_1+z_2||z_2+z_3||z_3+z_1|\\
    &=\frac{1}{4} \left||z_H|^2-1\right|.
\end{align*}
Now Example \ref{ex:sympol} and Theorem \ref{thm:orthocentercriterion} complete the proof.
\end{proof}

\begin{corollary}
Let $\triangle ABC$ be circumscribed about an ellipse $\mathcal{D}$. Assume that the circumcenter of $\triangle ABC$ coincides with the center of $\mathcal D$. Then
    \begin{equation*}
    \frac{\mathrm{Area}(\triangle ABC)}{\mathrm{Area}(\triangle H_A H_B H_C)}=\frac{\mathrm{Area}(\mathcal{R}_S)}{\mathrm{Area}(\mathcal{R}_R)}=\frac{\mathrm{Area}(\mathcal{C})}{\mathrm{Area}(\mathcal{D})}.
\end{equation*}
where $\mathcal{R}_S$ and $\mathcal{R}_R$ are the circumscribed square of the circumcircle of $\triangle ABC$ and the circumscribed rectangle of the ellipse $\mathcal{D}$.
\end{corollary}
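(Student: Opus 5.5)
Normalize the circumcircle $\mathcal C$ of $\triangle ABC$ to the unit circle $\mathbb T$, so its radius is $R=1$. Since the circumcenter coincides with the center of $\mathcal D$, the foci are $f$ and $-f$ for some $f\in\mathbb C$. The proof then reduces to evaluating each of the three ratios in terms of $|f|$ and checking that they agree.

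\emph{First ratio.} By Theorem \ref{thm:orthicarearatio},
\[
\frac{\mathrm{Area}(\triangle ABC)}{\mathrm{Area}(\triangle H_AH_BH_C)}=\frac{4}{\bigl||z_H|^2-1\bigr|}.
\]
Corollary \ref{cor:orthcirccen} gives $|z_H|=|f|^2$. Hence this ratio equals $4/\bigl|1-|f|^4\bigr|$. For an ellipse with center $O$ inscribed in a triangle inscribed in $\mathbb T$, the foci lie in $\mathbb D$, so $|f|<1$ and the ratio is $4/(1-|f|^4)$.

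\emph{Second ratio.} The circumscribed square of $\mathbb T$ has side $2$, so $\mathrm{Area}(\mathcal R_S)=4$. The axis-parallel circumscribed rectangle of $\mathcal D$ has sides $2a$ and $2b$, where $a,b$ are the semi-axes, so $\mathrm{Area}(\mathcal R_R)=4ab$. The second ratio is therefore $1/(ab)$. Likewise $\mathrm{Area}(\mathcal C)/\mathrm{Area}(\mathcal D)=\pi/(\pi ab)=1/(ab)$. So the last two ratios agree trivially.

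\emph{Key step: computing $ab$ in terms of $|f|$.} It remains to show $ab=(1-|f|^4)/4$. Put $c=|f|$, so $a^2-b^2=c^2$. I would use the circle-centered case of the Poncelet (Chapple--Euler type) condition from \cite{Dragovic-Murad2026}, which for concentric circle and ellipse reads $a+b=R=1$. If I prefer to rely only on this paper, I would derive it as follows:
\begin{itemize}
\item By Marden's theorem the foci are the critical points of $(z-z_1)(z-z_2)(z-z_3)$, and $\sigma_1=a_1+a_2+\overline{a_1a_2}\lambda$.
\item Alternatively, test a symmetric position: the degenerate/isosceles triangle with a vertex at the end of the minor axis.
\end{itemize}
Given $a+b=1$ and $a^2-b^2=c^2$, we get $a-b=c^2$. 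Then
\[
ab=\frac{(a+b)^2-(a-b)^2}{4}=\frac{1-c^4}{4},
\]
and so $1/(ab)=4/(1-|f|^4)$, which matches the first ratio.

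\emph{Main obstacle.} The hardest step is justifying $a+b=R$ within the present framework. My first attempt would be to apply Theorem \ref{thm:sympara} to the specific triangle with vertex $z_1=i$ on the minor-axis direction, taking $f$ real. From Theorem \ref{thm:sympara} with $a_1=f$, $a_2=-f$ we have $\sigma_1=-f^2\lambda$ and $\sigma_2=-f^2$. By symmetry the triangle is isosceles with $z_3=-\overline{z_2}$, and the tangency condition then forces $a+b=1$. Otherwise I would simply cite the known concentric Poncelet condition.
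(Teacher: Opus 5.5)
Your skeleton is the paper's. Theorem~\ref{thm:orthicarearatio} gives $4/\bigl||z_H|^2-1\bigr|$, the other two ratios equal $1/(ab)$ when $R=1$, and everything hinges on $4ab=\bigl||z_H|^2-1\bigr|$, which the paper simply quotes from \cite[Corollary 2.2]{Murad2026b}.

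\textbf{The gap} is in how you justify that identity. You assume the foci lie in $\mathbb D$ and then use $a+b=R$. In this paper ``circumscribed about'' is meant in the Poncelet sense: the three side \emph{lines} are tangent to the conic. Concentric ellipses with foci outside the circumcircle are explicitly included. The corollary right after Proposition~\ref{prop:obltri} says the triangle is obtuse in that case. The polar-circle corollary concerns an \emph{obtuse} triangle circumscribed about a concentric ellipse, which is impossible if $|f|<1$, since then $|z_H|=|f|^2<1$.

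For those ellipses $a+b=R$ is false; the concentric closure condition has a second branch, $a-b=R$. For instance, take $f=\sqrt2$. The inconic $|z-\sqrt2|+|z+\sqrt2|=3$ has $a=\tfrac32$ and $b=\tfrac12$, so $a+b=2\neq1$. Yet the triangle with vertices $i$ and $\pm\tfrac{\sqrt3}{2}+\tfrac{i}{2}$ has side lines $y=1\pm x/\sqrt3$ and $y=\tfrac12$, all tangent to it (two of them at points on the extensions of the sides). This triangle is obtuse with $z_H=2i$. The corollary still holds there, since $4/(|z_H|^2-1)=\tfrac43=1/(ab)$, but your argument never reaches this case.

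Your own test-triangle computation shows the same thing. With $f$ real, $z_1=i$, $z_2=e^{i\theta}$ and $z_3=-\overline{z_2}$, matching $\sigma_1$ gives $\sin\theta=(f^2-1)/2$. The horizontal side therefore touches a minor vertex, so $b=|f^2-1|/2$, and this yields $a+b=1$ only when $|f|<1$.

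\textbf{The repair} stays inside the paper. The inconic equation quoted in the proof of Proposition~\ref{prop:construction}, with foci $\pm f$, reads $|z-f|+|z+f|=|1-\overline{f}(-f)|=1+|f|^2$. Hence
\[
a=\frac{1+|f|^2}{2},\qquad b^2=a^2-|f|^2=\frac{(1-|f|^2)^2}{4},\qquad ab=\frac{\bigl|1-|f|^4\bigr|}{4}=\frac{\bigl||z_H|^2-1\bigr|}{4}.
\]
This holds for $|f|<1$ and $|f|>1$ alike, and the three ratios coincide. The paper writes this as $ab=\tfrac14(|z_H|^2-1)$; the absolute value is needed in the acute case you treated.

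If ``circumscribed'' is read strictly, as tangency to the side segments, your proof is complete. It then covers only the acute half of the configurations the paper intends.
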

\begin{proof} Let $a$ and $b$ denote the lengths of the semi-axes of the ellipse. It follows from \cite[Corollary 2.2]{Murad2026b} that
\[
ab=\frac14\left(|z_H|^2-1\right).
\]
Hence, Theorem \ref{thm:orthicarearatio} gives
    \begin{equation*}
    \frac{\mathrm{Area}(\triangle ABC)}{\mathrm{Area}(\triangle H_A H_B H_C)}=\frac{1}{ab}=\frac{\mathrm{Area}(\mathcal{R}_S)}{\mathrm{Area}(\mathcal{R}_R)}.
\end{equation*}
The same ratio can also be written as 
    \begin{equation*}
    \frac{\mathrm{Area}(\triangle ABC)}{\mathrm{Area}(\triangle H_A H_B H_C)}=\frac{\pi}{\pi ab}=\frac{\mathrm{Area}(\mathcal{C})}{\mathrm{Area}(\mathcal{D})}.
    \end{equation*}
This completes the proof.
\end{proof}

\begin{corollary}
Let $\triangle ABC$ be circumscribed about a central conic. Assume that the circumcenter of $\triangle ABC$ coincides with one of the foci of the conic. Then
    \begin{equation*}
    \frac{\mathrm{Area}(\triangle ABC)}{\mathrm{Area}(\triangle H_A H_B H_C)}=4\frac{a^2}{b^2}
\end{equation*}
where $a$ and $b$ $(a>b>0)$ are the semi-axes of the conic.
\end{corollary}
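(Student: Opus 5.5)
The plan is to combine Theorem \ref{thm:orthicarearatio} with Corollary \ref{cor:eccen}, and then translate the eccentricity into the ratio of the semi-axes.

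\textbf{Step 1 (normalization).} Normalize the circumcircle to $\mathbb T$. Place the focus that coincides with the circumcenter at the origin, so $a_2=0$, and write $f=a_1$ for the other focus. The computation in the proof of Theorem \ref{thm:orthicarearatio} is valid for any triangle inscribed in $\mathbb T$, so it gives
\[
\frac{\mathrm{Area}(\triangle ABC)}{\mathrm{Area}(\triangle H_AH_BH_C)}=\frac{4}{\bigl||z_H|^2-1\bigr|}.
\]
This ratio is finite because Proposition \ref{prop:obltri} shows the triangle is oblique, so $|z_H|\neq1$.

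\textbf{Step 2 (orthocenter and eccentricity).} By Corollary \ref{cor:focusorthocenter} the orthocenter is $z_H=f$. By Corollary \ref{cor:eccen} the eccentricity is $e=|z_H|=|f|$. Hence the ratio equals $4/|e^2-1|$.

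\textbf{Step 3 (semi-axes).} It remains to show $|e^2-1|=b^2/a^2$, with $a$ the semi-axis carrying the foci; I read ``$a>b$'' in the statement as meaning exactly this. Split into two cases.
\begin{itemize}
\item If the conic is an ellipse, then $c^2=a^2-b^2$ with $c=ae$, so $1-e^2=b^2/a^2$.
\item If the conic is a hyperbola, then $c^2=a^2+b^2$, so $e^2-1=b^2/a^2$.
\end{itemize}
In both cases $|e^2-1|=b^2/a^2$. Substituting into Step 2 gives the ratio $4a^2/b^2$.

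This also matches Corollary \ref{cor:focus}: for an ellipse $e<1$ and the triangle is acute, while for a hyperbola $e>1$ and the triangle is obtuse.

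\textbf{Main obstacle.} The only delicate point is the semi-axis convention for the hyperbola. There ``$a>b$'' need not hold geometrically, so the formula must be read with $a$ as the transverse (focal) semi-axis. I would state this convention explicitly before finishing the proof. Everything else is a direct substitution into results already established above.
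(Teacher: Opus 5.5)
Your argument is correct and is essentially the one the paper intends. The paper states this corollary without proof, but by analogy with the neighbouring corollaries the intended argument is Theorem~\ref{thm:orthicarearatio} combined with the focal-case data $R=2a$, $b^2=\tfrac14\bigl||z_H|^2-1\bigr|$ from \cite[Corollary~2.3]{Murad2026b}; for this ratio that amounts to the same thing as your $e=|z_H|$ together with $|1-e^2|=b^2/a^2$.

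Your caveat about the convention is also right, and it is in fact a correction to the statement as written. For the triangle with vertices $1,e^{i\pi/3},e^{2i\pi/3}$ one gets $|z_H|=2$ and a hyperbola with $a=\tfrac12$ and $b=\tfrac{\sqrt3}{2}>a$. The area ratio $4/3$ then equals $4a^2/b^2$ only when $a$ is the transverse (focal) semi-axis, not the larger one.
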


\begin{theorem}\label{thm:alphah}
Let $\triangle ABC$ be an oblique triangle. Assume that $\angle A$ is obtuse whenever $\triangle ABC$ is obtuse. Set
\begin{equation*}
    \alpha_H=\angle H_CH_AH_B,\qquad 
\beta_H=\angle H_AH_BH_C,\qquad 
\gamma_H=\angle H_BH_CH_A.
\end{equation*}
Then
\begin{equation*}
    \varepsilon\cos \alpha_H+\cos \beta_H+\cos \gamma_H=\varepsilon(-3+2(\sin^2 A+\sin^2 B+\sin^2 C)),
\end{equation*}
with $\varepsilon=1$ (respectively, $-1$) if $\triangle ABC$ is acute (respectively, obtuse).

Consequently,
\[
\varepsilon\cos \alpha_H+\cos \beta_H+\cos \gamma_H
\]
satisfies the orthocenter criterion.
\end{theorem}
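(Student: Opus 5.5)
We would rely on the classical description of the angles of the orthic triangle, and treat the acute and obtuse cases separately. For an acute triangle $\triangle ABC$ the orthic angles are $\alpha_H=\pi-2A$, $\beta_H=\pi-2B$ and $\gamma_H=\pi-2C$. This follows because $B,C,H_B,H_C$ are concyclic on the circle with diameter $BC$, and similarly for the other two pairs. The chasing shows $\angle H_CH_AB=A=\angle H_BH_AC$, so $\alpha_H=\pi-2A$. When $\angle A$ is obtuse, the same concyclicity arguments, now with $H$ outside the triangle, give
\[
\alpha_H=2A-\pi,\qquad \beta_H=2B,\qquad \gamma_H=2C .
\]
We would verify these by redoing the angle chase, using that $A$ is the orthocenter of $\triangle HBC$. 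The orthic triangle of $ABC$ is then the orthic triangle of the acute triangle $HBC$, whose angles are $\pi-A$, $\pi/2-C$ and $\pi/2-B$ (suitably assigned), and the acute formula applies to it.

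Next, convert to cosines. In the acute case,
\[
\cos\alpha_H+\cos\beta_H+\cos\gamma_H=-(\cos2A+\cos2B+\cos2C).
\]
In the obtuse case,
\[
-\cos\alpha_H+\cos\beta_H+\cos\gamma_H=\cos 2A+\cos2B+\cos2C .
\]
Thus in both cases the left side equals $-\varepsilon(\cos2A+\cos2B+\cos2C)$. Using $\cos2X=1-2\sin^2X$, we get $\cos2A+\cos2B+\cos2C=3-2(\sin^2A+\sin^2B+\sin^2C)$, which gives exactly the claimed identity.

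For the invariance statement, we would invoke Theorem \ref{thm:sina}: there $\sin^2A+\sin^2B+\sin^2C=(9-|z_H|^2)/4$, so the right side equals $\varepsilon(3-|z_H|^2)/2$. Within a family in which the center of $\mathbb T$ is the center or a focus of the conic, Proposition \ref{prop:obltri} shows every triangle is oblique. Moreover, $|z_H|$ is constant and $\neq1$, so the acute/obtuse type and hence $\varepsilon$ are also constant along the family. The quantity is therefore invariant. Conversely, if $|z_H|$ varies along $\Lambda$, then $\varepsilon(3-|z_H|^2)/2$ varies. It cannot stay constant, because $\varepsilon$ could only flip when $|z_H|$ crosses $1$, and a sign flip would require $(3-|z_H|^2)=-(3-|z_H'|^2)$ on a continuum of values, which is impossible since $\lambda\mapsto|z_H|$ is a nonconstant real-analytic function on an arc.

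The main obstacle is the obtuse-case angle chase. The configuration of the feet changes (two feet lie on extensions of sides), and one must carefully confirm which orthic angle equals $2A-\pi$ and which equal $2B$ and $2C$. A secondary point is handling $\varepsilon$ in the converse direction of the orthocenter criterion.
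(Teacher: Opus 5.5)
Your proposal is correct and follows the paper's proof. It uses the same orthic-angle formulas ($\pi-2A,\pi-2B,\pi-2C$ in the acute case; $2A-\pi,2B,2C$ when $A$ is obtuse), the same reduction via $\cos 2X=1-2\sin^2X$, and the same appeal to Theorem~\ref{thm:sina}. Your additions are sound and make explicit what the paper leaves implicit: deriving the obtuse case from the acute triangle $HBC$, and checking that $\varepsilon$ is constant in the distinguished configurations while a varying $|z_H|$ cannot be compensated by a sign flip.
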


\begin{proof} It follows from the property of orthic triangle $\triangle H_AH_BH_C$ of $\triangle ABC$ that
    \begin{equation*}
        \alpha_H=\pi-2\angle A,\qquad \beta_H=\pi-2\angle B\qquad \gamma_H=\pi-2\angle C
    \end{equation*}
if $\triangle ABC$ is acute; and
\begin{equation*}
        -\alpha_H=\pi-2\angle A,\qquad \beta_H=2\angle B\qquad \gamma_H=2\angle C
    \end{equation*}
    if $\triangle ABC$ is obtuse with obtuse angle $\angle A$.

Now, to prove (a), we first assume $\triangle ABC$ is acute. Then we have
\begin{align*}
    \cos \alpha_H+\cos \beta_H+\cos \gamma_H&=-\cos (2A)-\cos (2B )-\cos (2C)\\
    &=-3+2(\sin^2 A+\sin^2 B+\sin^2 C).
\end{align*}

Similarly, if $\triangle ABC$ is obtuse with obtuse angle $\angle A$, then we have
\begin{align*}
    -\cos \alpha_H+\cos \beta_H+\cos \gamma_H&=3-2(\sin^2 A+\sin^2 B+\sin^2 C).
\end{align*}
The conclusion follows from Theorem \ref{thm:sina}.
\end{proof}

\begin{theorem}\label{thm:congorth}
Let $\triangle ABC$ be inscribed in a circle and circumscribed about an ellipse. Assume that the foci of the ellipse lie inside the circumcircle of $\triangle ABC$. Let $\triangle H_AH_BH_C$ denote the orthic triangle of $\triangle ABC$. Then the area of the incircle of $\triangle H_AH_BH_C$ satisfies the orthocenter criterion.
\end{theorem}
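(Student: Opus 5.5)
The plan is to reduce the area of the incircle of the orthic triangle to the quantity $\cos A\cos B\cos C$, which Theorem~\ref{thm:sina} already controls. Normalize the circumcircle to $\mathbb T$, so $R=1$. For an acute triangle $\triangle ABC$, the orthocenter $H$ is the incenter of $\triangle H_AH_BH_C$: the altitudes bisect the angles of the orthic triangle, which are $\pi-2A,\pi-2B,\pi-2C$ (the same facts used in Theorem~\ref{thm:alphah}). The classical formula for the inradius of the orthic triangle is then
\[
r_H=|HH_A|\cos(\text{angle between } HH_A \text{ and } H_AH_B)=2R\cos A\cos B\cos C .
\]
I would check this formula directly with the tools of the paper rather than quote it. Using Proposition~\ref{prop:orthiccoord}, the inradius equals $2\,\mathrm{Area}(\triangle H_AH_BH_C)$ divided by the perimeter. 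The area ratio is given in Theorem~\ref{thm:orthicarearatio}. The side lengths are $|w_k-w_{k+1}|=\tfrac12|z_k+z_{k+1}||z_k-z_{k+1}|$, which are $\sin 2C$, $\sin 2A$, $\sin 2B$ up to the factor $R=1$, so the perimeter is $4\sin A\sin B\sin C=2\,\mathrm{Area}(\triangle ABC)$. Combining these gives $r_H=\tfrac14\,|\,|z_H|^2-1|$ in the acute case.

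With Theorem~\ref{thm:sina}, $\cos A\cos B\cos C=(1-|z_H|^2)/8$, so the area of the incircle of the orthic triangle is
\[
\pi r_H^2=\frac{\pi}{16}\bigl(1-|z_H|^2\bigr)^2 .
\]
Since $|z_H|\ne 1$ in the relevant situations (Proposition~\ref{prop:obltri}), and $t\mapsto (1-t^2)^2$ is injective on $[0,1)$, this quantity is constant on $\Lambda$ if and only if $|z_H|$ is constant. Theorem~\ref{thm:orthocentercriterion} then finishes the argument, with $\Phi(t)=\pi(1-t^2)^2/16$.

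The main obstacle is the acuteness of every triangle in the family, which is what the hypothesis on the foci must supply. For an obtuse triangle the incenter of the orthic triangle is the obtuse vertex rather than $H$. The inradius then acquires a factor such as $|\cos A|\sin B\sin C$, which is not a symmetric function of the vertices, so the argument above breaks down.

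To show acuteness I would first note that $\Lambda=\mathbb T$ when $a_1,a_2\in\mathbb D$ (Daepp et al.), so the family is a connected one-parameter loop. The orthocenter moves on the circle $\Gamma$ of Theorem~\ref{thm:orthcirc}. A triangle changes type only when $z_H$ crosses $\mathbb T$, that is, at a right triangle, and there are at most two such members (Proposition~\ref{prop:obltri}).

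I would then try to show that $\Gamma\subset\mathbb D$, i.e.\ $|a_1+a_2|+|a_1a_2|<1$, when both foci lie in $\mathbb D$ and the pair is $3$-Poncelet. I would attempt this through the eccentricity formula of Corollary~\ref{cor:eccen} and the $3$-Poncelet condition from \cite{Murad2026b}.

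If that inequality fails in general, the fallback is to restrict to the acute members of $\mathcal P$. For those the formula above holds verbatim. The statement then has to be read as asserting invariance along the acute part of the family, which is the case implicit in the hypothesis.
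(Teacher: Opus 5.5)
Your acute-case computation is correct and reaches the paper's formula by a different route. The paper applies $\cos\alpha_H+\cos\beta_H+\cos\gamma_H=1+r_H/R_H$ to the orthic triangle with $R_H=\tfrac12$ (the nine-point circle), and then invokes Theorem~\ref{thm:alphah} and Theorem~\ref{thm:sina}. You instead use $r_H=2\,\mathrm{Area}/\mathrm{perimeter}$, together with Theorem~\ref{thm:orthicarearatio}, the side lengths from Proposition~\ref{prop:orthiccoord}, and $\sin2A+\sin2B+\sin2C=4\sin A\sin B\sin C$. Both routes give $r_H=\tfrac14(1-|z_H|^2)$. Your worry about acuteness is legitimate: the paper's proof silently takes $\varepsilon=1$ in Theorem~\ref{thm:alphah}.

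\textbf{The gap.} The problem is how you propose to deal with acuteness.
\begin{itemize}
\item The inequality $|a_1+a_2|+|a_1a_2|<1$ is false for foci in $\mathbb D$. Take $a_1=\tfrac12$, $a_2=\tfrac34$ (so $\Lambda=\mathbb T$) and $\lambda=1$. The vertices are $1,e^{\pm i\theta}$ with $\cos\theta=\tfrac5{16}$, and $z_H=\tfrac{13}{8}$. This is an obtuse member of $\mathcal P$, so the family genuinely contains triangles where your formula fails.
\item Your fallback of restricting to acute members weakens the statement. It also leaves the converse open. To get $(a_1+a_2)a_1a_2=0$ from constancy of $|z_H|$, you need the acute parameters to form an infinite set, because $\operatorname{Re}(c\bar\lambda)$ can take equal values at two points of $\mathbb T$ with $c\neq0$. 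You never show that acute members exist at all.
\end{itemize}

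\textbf{How to close it.} You never need the whole family to be acute.
\begin{itemize}
\item For the ``if'' direction: in the center and focus cases, $|z_H|=|a_1|^2$ or $|z_H|=|a_1|$. Both are $<1$ because the foci lie in $\mathbb D$. So every member is acute, and $\pi r_H^2=\tfrac{\pi}{16}(1-|z_H|^2)^2$ is constant.
\item For the ``only if'' direction: $\min_{\lambda\in\mathbb T}|z_H|=\bigl||a_1+a_2|-|a_1a_2|\bigr|$. This is either at most $|a_1|+|a_2|-|a_1||a_2|=1-(1-|a_1|)(1-|a_2|)<1$, or at most $|a_1a_2|<1$. Hence the acute members fill a nonempty open arc of $\Lambda=\mathbb T$. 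Your formula holds on that arc. Constancy of $|z_H|^2=|a_1+a_2|^2+|a_1a_2|^2+2\operatorname{Re}\bigl((a_1+a_2)a_1a_2\bar\lambda\bigr)$ on an arc forces $(a_1+a_2)a_1a_2=0$.
\end{itemize}
This is where the hypothesis that the foci lie inside the circumcircle is actually used, and the obtuse members never need to be analyzed.
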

\begin{proof}
Let $r_H$ and $R_H$ be the inradius and circumradius of $\triangle H_AH_BH_C$, respectively. It is well known that the circumcircle of the orthic triangle is the nine-point circle of $\triangle ABC$, and hence
\[
R_H = \frac{1}{2}.
\]
We apply the identity
\begin{equation}\label{eq:sumcos}
\cos A + \cos B + \cos C = 1 + \frac{r}{R},
\end{equation}
valid for any triangle, to the orthic triangle $\triangle H_AH_BH_C$. This yields
\[
r_H = \frac12(\cos \alpha_H + \cos \beta_H + \cos \gamma_H - 1).
\]
Now $r_H$ satisfies the orthocenter criterion as the sum
\[
\cos \alpha_H + \cos \beta_H + \cos \gamma_H
\]
satisfies (Theorem \ref{thm:alphah}). This completes the proof.
\end{proof}

\begin{corollary}
Let $\triangle ABC$ be an acute triangle circumscribed about an ellipse whose center \,coincides with the circumcenter of $\triangle ABC$. Then the inradius of the orthic triangle of $\triangle ABC$ is equal to one-half of the harmonic mean of the semi-axes $a$ and $b$ of the ellipse.
\end{corollary}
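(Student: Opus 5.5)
The plan is to compute the orthic inradius $r_H$ in terms of $|z_H|$ using the identities already in the paper, then do the same for $a$, $b$ from the foci, and compare. Normalize the circumcircle to $\mathbb T$ with the center of the ellipse at $0$, so the foci are $\pm f$ with $|f|<1$. These are the hypotheses of Corollary~\ref{cor:orthcirccen} and of the corollary stating that such a triangle is acute when the foci lie inside the circumcircle.

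\emph{Step 1: the orthic inradius.} As in the proof of Theorem~\ref{thm:congorth}, $R_H=\tfrac12$ and
\[
r_H=\tfrac12\bigl(\cos\alpha_H+\cos\beta_H+\cos\gamma_H-1\bigr).
\]
Since the triangle is acute, Theorem~\ref{thm:alphah} applies with $\varepsilon=1$. Combined with Theorem~\ref{thm:sina}, which gives $\sin^2A+\sin^2B+\sin^2C=(9-|z_H|^2)/4$, this yields
\[
\cos\alpha_H+\cos\beta_H+\cos\gamma_H=-3+\tfrac{9-|z_H|^2}{2}=\tfrac{3-|z_H|^2}{2}.
\]
Hence $r_H=\tfrac14\bigl(1-|z_H|^2\bigr)$. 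By Corollary~\ref{cor:orthcirccen}, $|z_H|=|f|^2$, so
\[
r_H=\tfrac14\bigl(1-|f|^4\bigr).
\]

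\emph{Step 2: the semi-axes.} Take $a_1=f$, $a_2=-f$ in the eccentricity formula quoted in the proof of Corollary~\ref{cor:eccen}. This gives $e=2|f|/(1+|f|^2)$, and the focal half-distance is $c=|f|$. Therefore
\[
a=\frac ce=\frac{1+|f|^2}{2},\qquad b=\sqrt{a^2-c^2}=\frac{1-|f|^2}{2},
\]
where the positive root is taken because $|f|<1$. Consequently $a+b=1$ (a Chapple-type relation $a+b=R$) and $ab=\tfrac14(1-|f|^4)$. The value of $ab$ agrees in absolute value with \cite[Corollary 2.2]{Murad2026b}, as quoted in the proof of the first area-ratio corollary above.

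\emph{Step 3: conclusion.} One half of the harmonic mean of $a$ and $b$ is $\tfrac12\cdot\tfrac{2ab}{a+b}=\tfrac{ab}{a+b}=ab$, using $a+b=1$. This equals $r_H$ by Step~1. For a general circumradius $R$, scaling gives $a+b=R$ and the same identity.

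The main obstacle I expect is Step~2. It relies on using the eccentricity formula correctly, including its sign and modulus conventions when $a_1a_2=-f^2$, so that $1-\overline{a_1}a_2=1+|f|^2$, and it relies on recognizing the relation $a+b=1$. Should the formula's conventions cause trouble, I would instead derive $a$ directly: $2a$ equals the sum of the focal distances from the tangency point, and comparing with $ab=\tfrac14(1-|z_H|^2)$ from \cite{Murad2026b} then forces $a+b=1$.
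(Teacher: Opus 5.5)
Your proof is correct and follows essentially the paper's route. You get $r_H=\tfrac14\bigl(1-|z_H|^2\bigr)$ from Theorems~\ref{thm:congorth}, \ref{thm:alphah} and \ref{thm:sina}, and match it with $ab/(a+b)$ using $a+b=R$. The only difference is that you derive $a=\tfrac{1+|f|^2}{2}$ and $b=\tfrac{1-|f|^2}{2}$ explicitly from the foci, whereas the paper cites \cite[Corollary 2.2]{Murad2026b} for $ab$ and for $R=a+b$. Your fallback of reading $2a=1+|f|^2$ directly off $|z-f|+|z+f|=1+|f|^2$ is the cleaner version, since $a=c/e$ breaks down when $f=0$.
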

\begin{proof}
Let $R$ be the circumradius of $\triangle ABC$ and $a$ and $b$ be the semi-axes of the central conic.

By Theorems \ref{thm:alphah}, \ref{thm:congorth}, and \ref{thm:sina}, 
\[
r_H = \frac{ab}{R},
\]
We also have $R=a+b$. See \cite[Corollary 2.2]{Murad2026b}. This gives
\[
r_H = \frac{ab}{a+b},
\]
as claimed.
\end{proof}

\begin{corollary}
Let $\triangle ABC$ be an acute triangle circumscribed about an ellipse one of whose foci coincides with the circumcenter of $\triangle ABC$. Then the inradius of the orthic triangle of $\triangle ABC$ is equal to one-half of the semi-latus rectum of the ellipse.
\end{corollary}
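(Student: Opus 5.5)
We normalize the circumcircle to $\mathbb T$, so $R=1$, and place the foci at $a_2=0$ (the circumcenter) and $a_1=f$. The plan is to compute the inradius $r_H$ of the orthic triangle in terms of $|z_H|$ and convert that to $e$. Separately, we will show that the semi-major axis equals $1/2$ in this normalization. Comparing the two expressions then gives the claim.

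\emph{Step 1 (inradius via the orthocenter).} As in the proof of Theorem~\ref{thm:congorth}, $R_H=\tfrac12$. Applying \eqref{eq:sumcos} to the orthic triangle gives
\[
r_H=\tfrac12\bigl(\cos\alpha_H+\cos\beta_H+\cos\gamma_H-1\bigr).
\]
Since the triangle is acute, Theorem~\ref{thm:alphah} with $\varepsilon=1$ gives
\[
\cos\alpha_H+\cos\beta_H+\cos\gamma_H=-3+2\bigl(\sin^2A+\sin^2B+\sin^2C\bigr).
\]
By Theorem~\ref{thm:sina}, the right-hand side equals $-3+\tfrac{9-|z_H|^2}{2}=\tfrac{3-|z_H|^2}{2}$. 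Hence
\[
r_H=\frac{1-|z_H|^2}{4}.
\]
By Corollary~\ref{cor:focusorthocenter}, $z_H=f$, and by Corollary~\ref{cor:eccen}, $|z_H|=e$. Therefore $r_H=(1-e^2)/4$.

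\emph{Step 2 (the semi-major axis).} This is the step I expect to be the main obstacle, because it is the only place where the actual size of the ellipse, not just its eccentricity, enters. The distance between the foci is $2c=|f|$. The eccentricity formula quoted in the proof of Corollary~\ref{cor:eccen} gives $e=|f-0|/(1-0)=|f|$. Hence
\[
a=\frac{c}{e}=\frac{|f|/2}{|f|}=\frac12 .
\]
In general, $a=R/2$. As a cross-check, one could instead derive this from the ellipse equation $|z|+|z-f|=2a$ in \cite{Murad2026b}, or from the generalized Chapple--Euler relation of \cite{Dragovic-Murad2026}.

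\emph{Step 3 (conclusion).} The semi-latus rectum is
\[
\ell=\frac{b^2}{a}=a(1-e^2)=\frac{1-e^2}{2}.
\]
Comparing with Step 1 gives $r_H=\ell/2$. Both sides scale linearly with $R$, so the identity holds for an arbitrary circumradius.
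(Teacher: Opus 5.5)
Your proposal is correct and follows essentially the paper's route. The same chain of Theorems~\ref{thm:alphah}, \ref{thm:congorth} and \ref{thm:sina} yields $r_H=(1-|z_H|^2)/4$, which is the paper's $r_H=b^2/R$ written in terms of $e=|z_H|$, and your derivation of $a=R/2$ from the eccentricity formula replaces the paper's citation of $R=2a$ from \cite{Murad2026b}; the only nit is that $a=c/e$ breaks down when $f=0$ (the circle case), but your alternative via the conic equation $|z|+|z-f|=1$ gives $a=\tfrac12$ directly and covers that case too.
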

\begin{proof} 
Let $R$ be the circumradius of $\triangle ABC$ and $a$ and $b$ be the semi-axes of the central conic. It follows from Theorem \ref{thm:alphah}, Theorem \ref{thm:congorth}, and Theorem \ref{thm:sina} that 
\[
r_H = \frac{b^2}{R},
\]
We also have $R=2a$. See \cite[Corollary 2.3]{Murad2026b}. This gives
\[
r_H = \frac{b^2}{2a},
\]
as required.
\end{proof}

\begin{theorem}
Let $\triangle ABC \in \mathcal P$ and let $H$ be its orthocenter. Then the sum 
\[
|AH|^2+|BH|^2+|CH|^2
\]
and the product
\[
|AH||BH||CH|
\]
satisfy the orthocenter criterion.
\end{theorem}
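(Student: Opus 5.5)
We normalize the common circumcircle of $\mathcal P$ to the unit circle $\mathbb T$ and write $z_1,z_2,z_3$ for the coordinates of $A,B,C$. Since $z_H=z_1+z_2+z_3$, we have $z_H-z_1=z_2+z_3$, and cyclically for the other two vertices. Hence
\[
|AH|=|z_2+z_3|,\qquad |BH|=|z_3+z_1|,\qquad |CH|=|z_1+z_2|,
\]
which was already used in the proof of Theorem \ref{thm:AH_HHA}. Both quantities then reduce to expressions already computed in the paper.

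For the sum, identity \eqref{eq:possumsq} of Corollary \ref{cor:sumsqdis} gives
\[
|AH|^2+|BH|^2+|CH|^2=|z_1+z_2|^2+|z_2+z_3|^2+|z_3+z_1|^2=3+|z_H|^2 .
\]
Alternatively, one can apply Proposition \ref{prop:abcoh} with $l=-1$ to the points $z_k$, since $|z_k-z_H|^2$ summed over $k$ equals $3+(-1)(3(-1)+2)|z_H|^2=3+|z_H|^2$. The sum is therefore a strictly increasing function of $|z_H|$.

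For the product, Example \ref{ex:sympol} gives
\[
|AH|\,|BH|\,|CH|=|(z_1+z_2)(z_2+z_3)(z_3+z_1)|=\bigl||z_H|^2-1\bigr| .
\]

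The one step needing care is the converse direction of the criterion: showing that invariance of each quantity forces invariance of $|z_H|$. By Theorem \ref{thm:orthocentercriterion}, $|z_H|$ is invariant exactly when the circumcenter is the center or a focus of the conic, and then both expressions are clearly constant. Conversely, if $3+|z_H|^2$ is constant then $|z_H|$ is constant, since $|z_H|\ge 0$ and the map is injective.

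For the product the map $t\mapsto |t^2-1|$ is not injective, so a short extra argument is needed. Suppose $\bigl||z_H|^2-1\bigr|=c$ on $\Lambda$. Then $|z_H|^2\in\{1-c,\,1+c\}$, a set of at most two values. By \eqref{eq:zHrelation}, $|z_H|^2$ is a real-analytic function of $\lambda$ on the arc(s) of $\Lambda\subseteq\mathbb T$, and it is continuous. A continuous function taking at most two values is constant on each connected piece of $\Lambda$.

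To close the argument, we use the explicit form
\[
|z_H|^2=|a_1+a_2|^2+|a_1a_2|^2+2\operatorname{Re}\bigl((a_1+a_2)a_1a_2\overline{\lambda}\bigr).
\]
This function is constant on any set of $\lambda$ containing an arc only if $(a_1+a_2)a_1a_2=0$. So $|z_H|$ is invariant, and Theorem \ref{thm:orthocentercriterion} completes the proof. This continuity/arc argument on $\Lambda$ is the main subtlety. Alternatively, if $\Lambda$ were finite or degenerate, one would note that $1\pm c$ are the only admissible values and invoke Proposition \ref{prop:obltri} to exclude $|z_H|=1$ generically.
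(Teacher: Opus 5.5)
Your argument is correct and follows the paper's proof: the paper likewise gets $3+|z_H|^2$ from Proposition \ref{prop:abcoh} and $\bigl||z_H|^2-1\bigr|$ from Theorem \ref{thm:AH_HHA} and Example \ref{ex:sympol}, then appeals to Theorem \ref{thm:orthocentercriterion}; your extra step for the non-injective map $t\mapsto|t^2-1|$ supplies a converse that the paper's one-line appeal does not by itself justify. That step only needs $\Lambda$ to have more than four points (if $(a_1+a_2)a_1a_2\neq0$, each value of $|z_H|^2$ is attained at most twice on $\mathbb T$), and this holds because the porism supplies infinitely many triangles while $\lambda$ determines $\sigma_1,\sigma_2,\sigma_3$ and hence the triangle; so your closing remark about a finite $\Lambda$ and Proposition \ref{prop:obltri}, which does not actually argue anything, should simply be dropped.
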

\begin{proof}
By Proposition \ref{prop:abcoh},
\[
|AH|^2+|BH|^2+|CH|^2
=
3+|z_H|^2.
\]
On the other hand, Theorem \ref{thm:AH_HHA} and Example \ref{ex:sympol} yield
\[
|AH||BH||CH|=\bigl||z_H|^2-1\bigr|.
\]
Since both quantities are functions of $|z_H|$, the conclusion follows from Theorem \ref{thm:orthocentercriterion}.
\end{proof}

The preceding results naturally lead to another invariant involving the vertices and the midpoints of the sides. Although these quantities are not directly attached to the orthic triangle, they are closely related to the nine-point circle, so we include them here. 

\begin{theorem}\label{thm:midptdist}
Let $M_1,M_2,M_3$ denote the midpoints of the sides of a triangle in $\mathcal P$, and let $O_{\mathcal D}$ be the center of the conic. Then the quantities
\[
|O_{\mathcal D}A|^2+|O_{\mathcal D}B|^2+|O_{\mathcal D}C|^2, \qquad |O_{\mathcal D}M_1|^2+|O_{\mathcal D}M_2|^2+|O_{\mathcal D}M_3|^2
\]
satisfy the orthocenter criterion.

Moreover, if the circumcenter of the triangle coincides with one of the foci of the conic, then
\[
|O_{\mathcal D}M_1|
=
|O_{\mathcal D}M_2|
=
|O_{\mathcal D}M_3|
=
\frac{R}{2},
\]
where $R$ is the circumradius of the triangle.
\end{theorem}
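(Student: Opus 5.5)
We normalize the common circumcircle to $\mathbb T$, so that $R=1$. Write $a_1,a_2$ for the foci, so that the center of the conic is $c=O_{\mathcal D}=\tfrac12(a_1+a_2)$. By \eqref{eq:zHrelation},
\[
z_H=2c+\overline{a_1a_2}\,\lambda .
\]
The plan is to express both sums as $\text{const}+\operatorname{Re}(\kappa\,\overline{\lambda})$ for some $\kappa\in\mathbb C$. Then, exactly as in the proof of Theorem \ref{thm:orthocentercriterion}, each sum is independent of $\lambda$ if and only if $\kappa=0$. We will find $\kappa$ proportional to $c\,a_1a_2$, and $c\,a_1a_2=0$ is precisely the orthocenter criterion.

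For the vertex sum we expand as in Proposition \ref{prop:abcoh}. Using $|z_k|=1$,
\[
\sum_{k}|z_k-c|^2=3+3|c|^2-2\operatorname{Re}(\overline{c}\,z_H).
\]
Substituting $z_H=2c+\overline{a_1a_2}\lambda$ gives
\[
3-|c|^2-2\operatorname{Re}\bigl(c\,a_1a_2\,\overline{\lambda}\bigr).
\]

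For the midpoints we use $M_k=\tfrac12(z_H-z_k)$. Then $M_k-c=-\tfrac12\,(z_k-u)$, where
\[
u=z_H-2c=\overline{a_1a_2}\,\lambda .
\]
The same expansion yields
\[
\sum_k|M_k-c|^2=\tfrac14\Bigl(3+3|u|^2-2\operatorname{Re}(\overline{u}\,z_H)\Bigr).
\]
Substituting $z_H=2c+u$, this becomes
\[
\tfrac14\Bigl(3+|a_1a_2|^2-4\operatorname{Re}\bigl(c\,a_1a_2\,\overline{\lambda}\bigr)\Bigr).
\]
In both cases the $\lambda$-dependence is through $\operatorname{Re}(c\,a_1a_2\,\overline{\lambda})$ alone. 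Hence each quantity is invariant if and only if $c=0$ or $a_1a_2=0$, that is, if and only if the circumcenter is the center of the conic or one of its foci.

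The one delicate point is the step ``constant on $\Lambda$ $\Rightarrow$ $\kappa=0$''. This needs $\Lambda$ to contain enough points of $\mathbb T$: the function $\lambda\mapsto\operatorname{Re}(\kappa\overline{\lambda})$ with $\kappa\neq0$ takes each value at most twice on $\mathbb T$. The Poncelet family is a continuum of triangles, so $\Lambda$ is infinite, and the same implicit argument already underlies Theorem \ref{thm:orthocentercriterion}. I would simply invoke that reasoning.

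For the final assertion, take $a_2=0$, so that $c=a_1/2$ and, by Corollary \ref{cor:focusorthocenter}, $z_H=a_1$. Then
\[
M_k-c=\tfrac12(a_1-z_k)-\tfrac12 a_1=-\tfrac12 z_k ,
\]
so $|O_{\mathcal D}M_k|=\tfrac12=\tfrac R2$ for each $k$. Undoing the normalization by scaling gives the stated general form.
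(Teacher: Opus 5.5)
Your proof is correct and follows the paper's argument essentially step for step. You expand each sum of squared distances and substitute $z_H=a_1+a_2+\overline{a_1a_2}\,\lambda$, so that the only $\lambda$-dependence is a term proportional to $\operatorname{Re}\bigl((a_1+a_2)a_1a_2\overline{\lambda}\bigr)$; in the focal case you read off $|O_{\mathcal D}M_k|=\tfrac12|z_k-\overline{a_1a_2}\,\lambda|=\tfrac12$ from $a_2=0$, exactly as the paper does. Your version is slightly more careful than the paper's in two places: you keep the conjugate in $\operatorname{Re}(\overline{c}\,z_H)$, which the paper's display drops (harmlessly), and you flag that $\Lambda$ must contain at least three points, which holds because $\lambda=\sigma_3$ determines $\sigma_1,\sigma_2$ via Theorem~\ref{thm:sympara} and hence the triangle, so distinct triangles of the family give distinct parameters.
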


\begin{proof}
Assume that $\mathbb T$ is the common circumcircle of triangles in $\mathcal P$ and $a_1,a_2 \in \mathbb C$ be the foci of the conic. Then
\[
O_{\mathcal D}=\frac{a_1+a_2}{2}.
\]
Hence
\begin{align*}
|O_{\mathcal D}A|^2+|O_{\mathcal D}B|^2+|O_{\mathcal D}C|^2
&=\left|
z_1-\frac{a_1+a_2}{2}
\right|^2+\left|
z_2-\frac{a_1+a_2}{2}
\right|^2+\left|
z_3-\frac{a_1+a_2}{2}
\right|^2 \\
&=
3-\operatorname{Re}((a_1+a_2)z_H)+\frac34
|a_1+a_2|^2.
\end{align*}
Using \eqref{eq:zHrelation} gives
\[
\operatorname{Re}((a_1+a_2)z_H)=\operatorname{Re}((a_1+a_2)^2)+\operatorname{Re}(\overline{a_1a_2}(a_1+a_2)\lambda).
\]
Since $\lambda$ ranges over the admissible parameter set $\Lambda \subset \mathbb T$, the last expression is independent of $\lambda$ if and only if
\[
\overline{a_1a_2}(a_1+a_2)=0,
\]
that is, if and only if
\[
a_1+a_2=0
\qquad\text{or}\qquad
a_1a_2=0.
\]
This proves that the first quantity satisfies the orthocenter criterion.

Similarly, if $M_1$ be the midpoint of the opposite side of the vertex $z_1$. Then
\begin{align}
|O_{\mathcal D}M_1|^2
&:=\left|
\frac{z_2+z_3}{2}-\frac{a_1+a_2}{2}
\right|^2 \nonumber \\
&=
\frac14\left|z_H-z_1-(a_1+a_2)\right|^2  \nonumber\\
&=
\frac14\left|z_1-\overline{a_1a_2} \lambda\right|^2 \label{eq:odm}.
\end{align}
Summing \eqref{eq:odm} cyclically gives
\[
|O_{\mathcal D}M_1|^2+|O_{\mathcal D}M_2|^2+|O_{\mathcal D}M_3|^2
=
\frac34+\frac34|a_1a_2|^2
-\frac12
\operatorname{Re}
\left(
a_1a_2z_H\overline{\lambda}
\right).
\]
Using \eqref{eq:zHrelation} again we obtain
\[
a_1a_2z_H\overline{\lambda}
=
|a_1a_2|^2
+
a_1a_2(a_1+a_2)\overline{\lambda}.
\]
Hence
\[
|O_{\mathcal D}M_1|^2+|O_{\mathcal D}M_2|^2+|O_{\mathcal D}M_3|^2
=
\frac34+\frac14|a_1a_2|^2
-\frac12
\operatorname{Re}
\left(
a_1a_2(a_1+a_2)\overline{\lambda}
\right).
\]
The last expression is independent of $\lambda$ if and only if
\[
a_1+a_2=0
\qquad\text{or}\qquad
a_1a_2=0.
\] 
Thus the second quantity satisfies the orthocenter criterion.

Finally, if $a_2=0$, then \eqref{eq:odm} gives
\[
|O_{\mathcal D}M_1|
=
\frac12,
\]
and similarly
\[
|O_{\mathcal D}M_1|=|O_{\mathcal D}M_1|=\frac12.
\]
Thus every midpoint is at distance $1/2$ from the center of the conic. Restoring the circumradius $R$ gives
\[
|O_{\mathcal D}M_k|=\frac{R}{2},
\qquad k=1,2,3,
\]
which completes the proof.
\end{proof}

\begin{remark}
The second assertion of Theorem \ref{thm:midptdist} also follows from a result established in \cite[Corollary 3.4]{Dragovic-Murad2026}. Indeed, when the circumcenter coincides with a focus of the conic, the nine-point circle of the triangle coincides with the auxiliary circle of the conic. Hence the three side midpoints, which lie on the nine-point circle, are all at distance $R/2$ from the center of the conic.
\end{remark}

\subsection{Polar Circles}\label{sec:polcirc}
The invariance of $|AH|\,|HH_A|$ established in Theorem \ref{thm:AH_HHA} naturally gives rise to a  circle associated with an obtuse triangle. We show that this circle also exhibits similar invariance properties along Poncelet families.

\begin{definition}\label{def:polarcirc}
The \emph{polar circle} of an obtuse triangle $\triangle ABC$ is the circle centered at its orthocenter $H$ with radius $r_p$ satisfying
\[
r_p^2=|AH|\,|HH_A|.
\]
\end{definition}
\begin{remark}
    Since
    \[
    |AH|\,|HH_A|=|BH|\,|HH_B|=|CH|\,|HH_C|,
    \]
    the value of $r_p$ defined above is independent of the choice of the vertex of $\triangle ABC$.
\end{remark}

\begin{theorem}[Area Invariance of Polar Circles in a Poncelet Family]
The area of the polar circle of a triangle in $\mathcal P$ satisfies the orthocenter criterion.

In particular, the polar circle remains fixed throughout the family if and only if the common circumcenter of the triangles coincides with one of the foci of the conic.
\end{theorem}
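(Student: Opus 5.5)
We normalize the common circumcircle of $\mathcal P$ to $\mathbb T$. From Definition \ref{def:polarcirc} and the computation in the proof of Theorem \ref{thm:AH_HHA},
\[
r_p^2=|AH|\,|HH_A|=\tfrac12|z_1+z_2|\,|z_2+z_3|\,|z_3+z_1|=\tfrac12\bigl||z_H|^2-1\bigr|,
\]
where the last equality is Example \ref{ex:sympol}. Hence the area of the polar circle is $\pi r_p^2=\frac{\pi}{2}\bigl||z_H|^2-1\bigr|$, a function of $|z_H|$ alone. Theorem \ref{thm:orthocentercriterion} then shows it is invariant exactly when the circumcenter is the center of the conic or one of its foci.

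One point needs care. The function $t\mapsto |t^2-1|$ is not injective on $\mathbb R_{>0}$, so the direction "area invariant $\Rightarrow$ $|z_H|$ invariant" does not follow formally. I would handle it directly. By \eqref{eq:zHrelation}, $|z_H|^2=c+2\operatorname{Re}(\mu\overline\lambda)$ with $\mu=(a_1+a_2)a_1a_2$ and $c$ constant. If $\mu\ne0$, this is a nonconstant real-analytic function of $\lambda$ on the arc(s) $\Lambda$. Then $\bigl||z_H|^2-1\bigr|$ constant would force $|z_H|^2$ to take at most two values. On a connected infinite piece of $\Lambda$, continuity makes $|z_H|^2$ constant, which is a contradiction. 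I expect this to be the main obstacle: it requires that $\Lambda$ contain a nondegenerate arc. This holds because the Poncelet family is a continuous one-parameter family with $\lambda=z_1z_2z_3$ varying continuously and nonconstantly. I would cite \cite{Murad2026b} for this.

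For the \emph{in particular} statement, the polar circle is $|z-z_H|=r_p$. It is fixed precisely when both $z_H$ and $r_p$ are constant. By \eqref{eq:zHrelation}, $z_H=a_1+a_2+\overline{a_1a_2}\lambda$ is constant on the infinite set $\Lambda$ if and only if $a_1a_2=0$, that is, the circumcenter is a focus. In that case $z_H=a_1$ by Corollary \ref{cor:focusorthocenter}, and $r_p^2=\tfrac12\bigl||a_1|^2-1\bigr|$ is also constant. Conversely, when the circumcenter is the center of the conic, $r_p$ is constant but $z_H$ moves on the circle $|z|=|f|^2$ (Corollary \ref{cor:orthcirccen}), so the circle is not fixed.

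Finally, the polar circle is defined only for obtuse triangles, so I would remark that the formula for $r_p^2$ is used as the natural extension whenever $|z_H|\neq1$. I would also note that Proposition \ref{prop:obltri} guarantees $|z_H|\ne1$ in the special cases, so the circle is nondegenerate there.
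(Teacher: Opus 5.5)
Your proposal is correct and follows the paper's proof. The radius is handled through $r_p^2=\tfrac12\bigl||z_H|^2-1\bigr|$, obtained from Theorem~\ref{thm:AH_HHA} and Example~\ref{ex:sympol} and combined with the orthocenter criterion, and the fixed-circle claim through the behavior of the center $z_H$. Your handling of the non-injectivity of $t\mapsto|t^2-1|$, and your direct use of \eqref{eq:zHrelation} instead of Corollary~\ref{cor:focusorthocenter} to decide when $z_H$ is constant, make two steps rigorous that the paper passes over quickly.
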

\begin{proof}
By Definition \ref{def:polarcirc} and Theorem \ref{thm:AH_HHA}, the radius $r_p$ of the polar circle of a triangle in $\mathcal P$ remains invariant if and only if the circumcenter of the triangles coincides either with the center of the conic or with one of its foci.

Since the center of the polar circle is the orthocenter of the reference triangle, by Corollary \ref{cor:focusorthocenter}, the center of the polar circle remains fixed throughout the family if and only if the circumcenter of the reference triangle coincides with one of the foci of the conic. Hence both the center and the radius of the polar circle remain fixed throughout the family, and therefore the area of the polar circle (equivalently, the circle itself) remains invariant.

This completes the proof.
\end{proof}

\begin{corollary}
Let $\triangle ABC$ be an obtuse triangle circumscribed about an ellipse $\mathcal D$ whose center coincides with the circumcenter of $\triangle ABC$. Then the area of the polar circle of $\triangle ABC$ is twice the area of the ellipse $\mathcal D$.
\end{corollary}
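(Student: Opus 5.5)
Normalize the circumcircle to $\mathbb T$, so $R=1$, and place the center of $\mathcal D$ at the origin. Its foci are then $f$ and $-f$ with $a_1=f$, $a_2=-f$, and $a_1+a_2=0$.

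\emph{Step 1: the polar radius in terms of $|f|$.} By Theorem \ref{thm:AH_HHA} and Example \ref{ex:sympol},
\[
r_p^2=|AH|\,|HH_A|=\tfrac12\bigl||z_H|^2-1\bigr|.
\]
Corollary \ref{cor:orthcirccen} gives $|z_H|=|f|^2$, hence
\[
\pi r_p^2=\tfrac{\pi}{2}\bigl||f|^4-1\bigr|.
\]

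\emph{Step 2: the area of $\mathcal D$.} Let $a,b$ be the semi-axes. The proof of the corollary following Theorem \ref{thm:orthicarearatio} quotes \cite[Corollary 2.2]{Murad2026b} as $ab=\tfrac14(|z_H|^2-1)$. Because it is written without an absolute value, I must first check that this formula also holds in the obtuse regime. Since the triangle is obtuse, the orthocenter lies outside the circumcircle, so $|z_H|>1$. By the earlier corollary on acute and obtuse triangles, this means the foci lie outside $\mathbb T$, i.e.\ $|f|>1$. In that case $|z_H|^2-1>0$, and the formula as stated gives
\[
\pi ab=\tfrac{\pi}{4}\bigl(|f|^4-1\bigr).
\]

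\emph{Step 3: comparison.} Steps 1 and 2 give
\[
\pi r_p^2=\tfrac{\pi}{2}(|f|^4-1)=2\cdot\tfrac{\pi}{4}(|f|^4-1)=2\pi ab,
\]
which is twice the area of $\mathcal D$, as claimed.

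The main obstacle is Step 2. I need to confirm that the cited formula $ab=\tfrac14(|z_H|^2-1)$ for a centered ellipse holds, up to sign, when the foci lie outside the circle. To double-check it directly, I would use the eccentricity formula $e=|a_1-a_2|/|1-\overline{a_1}a_2|$. With $a_1=f$, $a_2=-f$ this gives $e=2|f|/(1+|f|^2)$. Combined with $R=a+b=1$ (\cite[Corollary 2.2]{Murad2026b}), this yields $a$ and $b$ explicitly, and then $ab=\tfrac14\bigl||f|^4-1\bigr|$. If a sign or factor discrepancy appears there, that computation settles the correct constant.
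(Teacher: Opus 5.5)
Your Steps 1--3 are the paper's argument. The paper also writes $r_p^2=\tfrac12(|z_H|^2-1)$ via Theorem~\ref{thm:AH_HHA}, quotes $4ab=|z_H|^2-1$ from \cite{Murad2026b}, and concludes $r_p^2=2ab$. If you accept that citation, as the paper does, your proof is complete.

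The problem is the check you propose for the step you call the main obstacle. You plan to combine $e=2|f|/(1+|f|^2)$ with $a+b=R=1$. But $a+b=R$ holds only when the foci are inside the circle; the paper invokes it only for acute triangles. It fails in the obtuse regime $|f|>1$ that you need. Carried out there, your computation gives $b/a=(|f|^2-1)/(|f|^2+1)$. Hence $a=(1+|f|^2)/(2|f|^2)$, $b=(|f|^2-1)/(2|f|^2)$, and $ab=(|f|^4-1)/(4|f|^4)$, not $\tfrac14(|f|^4-1)$ as you assert. By your own rule this would ``settle'' the wrong constant, and Step~3 would then be off by the factor $|f|^4$.

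The correct check uses the inconic equation directly. With $a_1=f$, $a_2=-f$ the ellipse is $|z-f|+|z+f|=|1-\overline{a_1}a_2|=1+|f|^2$. So $a=\tfrac12(1+|f|^2)$, $c=|f|$, and $b=\sqrt{a^2-c^2}=\tfrac12\bigl||f|^2-1\bigr|$. This gives $ab=\tfrac14\bigl||f|^4-1\bigr|$ in both regimes. When $|f|>1$, the Poncelet relation is $a-b=1=R$, while $a+b=|f|^2$.

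A concrete instance confirms both points. Take $f=\sqrt2$ and $\lambda=i$. This gives the obtuse triangle with vertices $-i$ and $\tfrac12(\pm\sqrt3-i)$. Its side lines $y=-\tfrac12$ and $y=\pm x/\sqrt3-1$ are tangent to $\tfrac{x^2}{9/4}+\tfrac{y^2}{1/4}=1$. Here $a+b=2\neq1$, yet $r_p^2=|AH|\,|HH_A|=1\cdot\tfrac32=2ab$, exactly as the statement asserts.
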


\begin{proof}
Since $\triangle ABC$ is obtuse, its polar circle is well defined. By Theorem \ref{thm:AH_HHA},
\[
r_p^2=\frac12\left(|z_H|^2-1\right).
\]
On the other hand, it follows from \cite[Corollary 2.2]{Murad2026b} that
\[
4ab=|z_H|^2-1,
\]
where $a$ and $b$ denote the lengths of the semi-axes of the ellipse. Hence
\[
r_p^2=2ab.
\]
Therefore,
\[
\operatorname{Area}(\text{polar circle})
=\pi r_p^2
=2\pi ab
=2\times\operatorname{Area}(\text{ellipse}),
\]
which proves the result.
\end{proof}

\begin{corollary}
Let $\triangle ABC$ be circumscribed about a hyperbola $\mathcal D$. Assume that the circumcenter of $\triangle ABC$ coincides with one of the foci of $\mathcal D$. Then the area of the polar circle of $\triangle ABC$ is twice the area of the minor auxiliary circle of $\mathcal D$.
\end{corollary}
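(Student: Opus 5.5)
The plan mirrors the preceding corollary for the ellipse: express the squared polar radius through $|z_H|$, then express $b^2$ through the same quantity.

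First, normalize the circumcircle to $\mathbb T$ and place the foci of $\mathcal D$ at $a_2=0$ and $a_1$. The conic is a hyperbola, so Corollary \ref{cor:focus} shows $\triangle ABC$ is obtuse; hence its polar circle is defined (Definition \ref{def:polarcirc}). By Corollary \ref{cor:focusorthocenter}, $z_H=a_1$, and for a hyperbola $|a_1|>1$. Theorem \ref{thm:AH_HHA} together with Example \ref{ex:sympol} then gives
\[
r_p^2=\tfrac12\bigl||z_H|^2-1\bigr|=\tfrac12\left(|a_1|^2-1\right).
\]

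Second, I will compute the semi-axes of $\mathcal D$ in terms of $|a_1|$. The focal distance is $2c=|a_1-0|=|a_1|$, so $c=|a_1|/2$. By Corollary \ref{cor:eccen}, the eccentricity is $e=|OH|=|a_1|$. The semi-transverse axis is therefore $a=c/e=1/2$, which is the hyperbolic analogue of $R=2a$. Consequently
\[
b^2=c^2-a^2=\frac{|a_1|^2-1}{4}.
\]

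Comparing the two displays gives $r_p^2=2b^2$. Hence $\operatorname{Area}(\text{polar circle})=\pi r_p^2=2\pi b^2$, which is twice the area of the minor auxiliary circle of radius $b$. Restoring a general circumradius $R$ scales both areas by $R^2$, so the ratio is unchanged.

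The only delicate step is the second one. It uses the eccentricity formula from Corollary \ref{cor:eccen}, which is proved for any central conic, so it applies to hyperbolas as well. The remaining care is the sign: since $|a_1|>1$, the absolute value in the first display equals $|a_1|^2-1$, and this is exactly what matches $c^2-a^2>0$.
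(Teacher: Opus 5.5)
Your proof is correct and follows the paper's argument. Corollary \ref{cor:focus} gives obtuseness, Theorem \ref{thm:AH_HHA} gives $r_p^2=\tfrac12\bigl||z_H|^2-1\bigr|$, and you then compare with $b^2$; the only difference is that the paper cites $b^2=\tfrac14\bigl||z_H|^2-1\bigr|$ from \cite[Corollary 2.3]{Murad2026b}, whereas you derive it from $c=|a_1|/2$ and $e=|a_1|$ (Corollary \ref{cor:eccen}), hence $a=1/2$, which keeps that step within the paper.
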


\begin{proof}
By Corollary \ref{cor:focus}, the triangle $\triangle ABC$ is obtuse, so its polar circle is well defined. By Theorem \ref{thm:AH_HHA},
\[
r_p^2=\frac12\left||z_H|^2-1\right|.
\]
On the other hand,
\[
b^2=\frac14\left||z_H|^2-1\right|,
\]
where $b$ denotes the length of the semi-minor axis of the hyperbola (see \cite[Corollary 2.3]{Murad2026b}). Hence
\[
r_p^2=2b^2.
\]
Since the area of the polar circle is $\pi r_p^2$ and the area of the minor auxiliary circle is $\pi b^2$, the assertion follows.
\end{proof}

\subsection{Tangential Triangles}\label{sec:tangtri}
The identities obtained in the preceding subsections also lead to invariants associated with the tangential triangle. Since the tangential triangle is projectively dual to the original triangle with respect to the circumcircle, its geometry is closely related to the orthocenter parameter $z_H$. We show that several metric properties of the tangential triangle depend only on $z_H$.

Throughout this subsection, for each vertex $X\in \{A,B,C\}$, $T_X$ denotes the intersection of the tangents to the circumcircle of $\triangle ABC$ at its remaining two vertices.
\begin{definition}
The triangle formed by the polars of the vertices of $\triangle ABC$ with respect to a conic is called the \emph{polar triangle} of $\triangle ABC$.

When the conic is the circumcircle of $\triangle ABC$, the polar triangle is called the \emph{tangential triangle}. Its circumcircle is
called the \emph{tangential circle} of $\triangle ABC$.
\end{definition}

\begin{proposition}\label{prop:tangentialcoord}
Let $\triangle z_1z_2z_3$ be inscribed in the unit circle $\mathbb T$. Let $w_1 \in \mathbb C$ denote the complex coordinate of the intersection of the tangents to $\mathbb T$ at $z_2$ and $z_3$. Then
\[
w_1=\frac{2z_2z_3}{z_2+z_3}.
\]
Cyclically,
\[
w_2=\frac{2z_3z_1}{z_3+z_1},\qquad
w_3=\frac{2z_1z_2}{z_1+z_2}.
\]
\end{proposition}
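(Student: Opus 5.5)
The plan is to characterize the pole $w_1$ of the chord $z_2z_3$ through the two perpendicularity (tangency) conditions and then solve a $2\times 2$ linear system in $w_1$ and $\overline{w_1}$. The tangent to $\mathbb T$ at a point $z_k$ is the line through $z_k$ perpendicular to the radius $Oz_k$. In complex form, a point $w$ lies on it exactly when $\operatorname{Re}(\overline{z_k}w)=1$, equivalently
\[
\overline{z_k}\,w+z_k\,\overline{w}=2 .
\]
The first step is to justify this equation. The condition is that $(w-z_k)/z_k$ is purely imaginary. Writing this condition out and using $z_k\overline{z_k}=1$ gives the displayed equation.

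Next, impose the equation for both $k=2$ and $k=3$:
\[
\overline{z_2}\,w_1+z_2\,\overline{w_1}=2,\qquad \overline{z_3}\,w_1+z_3\,\overline{w_1}=2 .
\]
I would eliminate $\overline{w_1}$ by multiplying the first equation by $z_3$, multiplying the second by $z_2$, and subtracting. This gives
\[
w_1\bigl(z_3\overline{z_2}-z_2\overline{z_3}\bigr)=2(z_3-z_2).
\]
Substituting $\overline{z_k}=1/z_k$ rewrites the coefficient as $(z_3^2-z_2^2)/(z_2z_3)$. Cancelling the factor $z_3-z_2\neq 0$, which is allowed since the vertices are distinct, then yields $w_1=2z_2z_3/(z_2+z_3)$.

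The only genuine obstacle is nondegeneracy. The formula requires $z_2+z_3\neq 0$, that is, $z_2z_3$ must not be a diameter. If it were a diameter, the two tangents would be parallel and $T_A$ would not exist, so the statement is implicitly restricted to the case where the tangents meet; I would remark on this. Finally, the formulas for $w_2$ and $w_3$ follow by cyclic relabeling of the indices, exactly as in Proposition \ref{prop:orthiccoord}.
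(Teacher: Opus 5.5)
Your proof is correct and follows the paper's route. Both write the two tangents as $\overline{z_k}\,w+z_k\,\overline{w}=2$ and solve the resulting linear system in $w_1,\overline{w_1}$; the paper first derives $\overline{w_1}=w_1/(z_2z_3)$ and substitutes it back, while you eliminate $\overline{w_1}$ directly, which is only cosmetic. Your remark that the formula needs $z_2+z_3\neq 0$ (the tangents at antipodal points are parallel) is a worthwhile addition that the paper leaves implicit.
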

\begin{proof}
The complex number $w_1$ satisfies
\[
w_1\overline{z_2}+\overline{w_1}z_2=2,
\qquad
w_1\overline{z_3}+\overline{w_1}z_3=2.
\]
Subtracting these equations gives
\[
w_1(\overline{z_2}-\overline{z_3})
=
-\overline{w_1}(z_2-z_3).
\]
Since $|z_2|=|z_3|=1$, we have
\[
\overline{z_2}-\overline{z_3}
=
-\frac{z_2-z_3}{z_2z_3},
\]
and hence
\[
\overline{w_1}
=
\frac{w_1}{z_2z_3}.
\]
Substituting this into the first tangent equation yields
\[
w_1\overline{z_2}
+\frac{w_1}{z_3}
=2,
\]
or equivalently,
\[
w_1\left(\frac{1}{z_2}+\frac{1}{z_3}\right)=2.
\]
Therefore,
\[
w_1=\frac{2z_2z_3}{z_2+z_3}.
\]
The formulas for $w_2$ and $w_3$ follow by cyclic permutation of the
indices.
\end{proof}

\begin{theorem}
Let $\triangle ABC\in\mathcal P$, and let
$\triangle T_AT_BT_C$ denote its tangential triangle.
For each vertex $X \in \{A,B,C\}$, let $P_X$ be the point dividing
$\overline{XT_X}$ internally in the ratio $1:2$, and let $O$ denote the circumcenter of $\triangle ABC$. Then the products
\[
|OT_A|\,|OT_B|\,|OT_C|, \qquad
|OP_A|\,|OP_B|\,|OP_C|
\]
satisfy the orthocenter criterion.
\end{theorem}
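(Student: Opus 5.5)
The approach is to normalize the common circumcircle to $\mathbb T$, so that $O=0$, and to express both products as explicit functions of $|z_H|$ alone. Then Theorem \ref{thm:orthocentercriterion} and Example \ref{ex:sympol} finish the argument, exactly as in the proofs of Theorems \ref{thm:AH_HHA} and \ref{thm:orthicarearatio}.

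\emph{The first product.} By Proposition \ref{prop:tangentialcoord}, $T_A=w_1=2z_2z_3/(z_2+z_3)$. Since $|z_2z_3|=1$, this gives $|OT_A|=2/|z_2+z_3|$, and similarly for the other two vertices. Hence
\[
|OT_A|\,|OT_B|\,|OT_C|=\frac{8}{|z_1+z_2|\,|z_2+z_3|\,|z_3+z_1|}=\frac{8}{\bigl||z_H|^2-1\bigr|},
\]
by Example \ref{ex:sympol}. The denominator vanishes only for right triangles, where a tangential vertex is at infinity. By Proposition \ref{prop:obltri} there are at most two such triangles, and they are excluded.

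\emph{The second product.} I read ``internally in the ratio $1:2$'' as $|XP_X|:|P_XT_X|=1:2$, so $P_A=(2z_1+w_1)/3$. Putting everything over the common denominator gives
\[
2z_1+\frac{2z_2z_3}{z_2+z_3}=\frac{2(z_1z_2+z_1z_3+z_2z_3)}{z_2+z_3}=\frac{2\sigma_2}{z_2+z_3}.
\]
Since $\sigma_2=\overline{z_H}\lambda$ by \eqref{eq:sigma1sigma2}, we get $|OP_A|=\tfrac23|z_H|/|z_2+z_3|$, with cyclic analogues. Therefore
\[
|OP_A|\,|OP_B|\,|OP_C|=\frac{8}{27}\,\frac{|z_H|^3}{\bigl||z_H|^2-1\bigr|}.
\]

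\emph{The converse direction (main obstacle).} Both products are functions $g(|z_H|)$, so invariance of $|z_H|$ gives invariance of the products. For the converse, the orthocenter criterion needs the ``only if'' direction. Unlike Theorem \ref{thm:orthocentercriterion}, we cannot take $\Phi$ to be the identity, because the relevant maps are $t\mapsto 8/|t^2-1|$ and $t\mapsto t^3/|t^2-1|$ on $t\ge0$, $t\ne1$. I would argue as follows.
\begin{itemize}
\item Each map is real-analytic and nonconstant on each of the intervals $[0,1)$ and $(1,\infty)$, so every level set is finite.
\item By \eqref{eq:zHrelation}, $|z_H|=|a_1+a_2+\overline{a_1a_2}\lambda|$. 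If $a_1a_2(a_1+a_2)\neq0$, this depends continuously and nonconstantly on $\lambda$ along the arcs of $\Lambda$.
\item Hence along each arc $|z_H|$ sweeps out an interval, and $g(|z_H|)$ cannot stay constant on it.
\end{itemize}
The point requiring care is that $\Lambda$ contains a nondegenerate arc, or at least infinitely many points on which $|z_H|$ takes infinitely many values. I would justify this from the continuity of the Poncelet family: the vertex on $\mathbb T$ moves continuously, so $\lambda=z_1z_2z_3$ traces a connected set. The two excluded right-triangle parameters do not affect the argument.
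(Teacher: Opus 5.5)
Your proof is correct and follows the paper's own computation: Proposition~\ref{prop:tangentialcoord} gives $|OT_A|=2/|z_2+z_3|$, and Example~\ref{ex:sympol} turns both products into functions of $|z_H|$.

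\textbf{The constant.} Your constant is the right one. Since $2z_1+w_1=2\sigma_2/(z_2+z_3)$, one gets $|OP_A|=\frac{2|z_H|}{3|z_2+z_3|}$. The paper instead states $\frac{|z_H|}{3|z_2+z_3|}$, and hence $\frac{|z_H|^3}{27\,\bigl||z_H|^2-1\bigr|}$ for the product. That is off by a factor of $8$, though this does not affect the invariance claim.

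\textbf{The converse.} Here you genuinely go beyond the paper, which only says that dependence on $|z_H|$ alone gives the criterion via Theorem~\ref{thm:orthocentercriterion}. That theorem quantifies over all $\Phi$. For a single non-injective $g$ such as $t\mapsto 8/\bigl|t^2-1\bigr|$, it yields only the ``if'' half, and your argument supplies the ``only if'' half.

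\textbf{A simpler route for the converse.} You can avoid arcs and continuity of the family altogether. By Theorem~\ref{thm:sympara} and Vieta, $\lambda$ determines the triangle, so $\Lambda$ is infinite because $\mathcal P$ is. When $a_1a_2(a_1+a_2)\neq0$, the function $|z_H|^2=|a_1+a_2|^2+|a_1a_2|^2+2\operatorname{Re}\bigl((a_1+a_2)a_1a_2\overline{\lambda}\bigr)$ takes each value at most twice on $\mathbb T$. Hence $|z_H|$ takes infinitely many values on $\Lambda$. Your two functions are rational on each side of $t=1$, so their level sets are finite; it is rationality, not real-analyticity, that gives finiteness. Therefore the products take infinitely many values and are not invariant.
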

\begin{proof}
Assume that the circumcircle is the unit circle
$\mathbb T$, so that the circumcenter is the origin $O$. Let $z_1,z_2,z_3 \in \mathbb C$ be the complex coordinates of the vertices $A,B,C$, respectively, 
and let $w_1,w_2,w_3 \in \mathbb C$ denote the vertices of its tangential triangle. By Proposition \ref{prop:tangentialcoord},
\[
w_1=\frac{2z_2z_3}{z_2+z_3},\qquad
w_2=\frac{2z_3z_1}{z_3+z_1},\qquad
w_3=\frac{2z_1z_2}{z_1+z_2}.
\]

Since $|z_i|=1$, we obtain
\[
|w_1|
=
\left|
\frac{2z_2z_3}{z_2+z_3}
\right|
=
\frac{2}{|z_2+z_3|},
\]
and cyclically,
\[
|w_2|
=
\frac{2}{|z_3+z_1|},
\qquad
|w_3|
=
\frac{2}{|z_1+z_2|}.
\]
Hence
\begin{align*}
    |OT_A|\,|OT_B|\,|OT_C|&=|w_1|\,|w_2|\,|w_3|\\
    &=\frac{8}
{|z_1+z_2|\,|z_2+z_3|\,|z_3+z_1|}\\
&=
\frac{8}{\bigl||z_H|^2-1\bigr|}.
\end{align*}
where the last equality followed from Example \eqref{ex:sympol} and $z_H$ denotes the complex coordinate of the orthocenter. 

Now let $p_1,p_2,p_3 \in \mathbb C$ be the complex coordinates of $P_A$, $P_B$, and $P_C$, respectively. Then
\[
p_1=\frac{2z_1+w_1}{3},\qquad
p_2=\frac{2z_2+w_2}{3},\qquad
p_3=\frac{2z_3+w_3}{3}.
\]
A direct computation yields
\[
|OP_A|
:= |p_1|=
\frac{|z_H|}
{3|z_2+z_3|},
\]
and the expressions for $|OP_B|$ and $|OP_C|$ follow cyclically.

Consequently,
\[
|OP_A|\,|OP_B|\,|OP_C|
=
\frac{|z_H|^3}
{27\bigl||z_H|^2-1\bigr|}.
\]
Since the products depend only on the quantity $|z_H|$, they satisfy the orthocenter criterion (Theorem \ref{thm:orthocentercriterion}). 
\end{proof}

\begin{theorem}\label{thm:tangcirc}
Let $\triangle z_1z_2z_3$ be inscribed in $\mathbb T$. Then the tangential circle of $\triangle z_1z_2z_3$ is
\[
\left|z-\frac{2z_H}{|z_H|^2-1}\right|
=
\frac{2}{\bigl||z_H|^2-1\bigr|},
\]
provided $|z_H|\neq1$, where $z_H$ denotes the complex coordinate of the orthocenter of $\triangle z_1z_2z_3$.
\end{theorem}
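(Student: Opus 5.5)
The plan is to verify directly that the proposed center $c=\frac{2z_H}{|z_H|^2-1}$ is equidistant, at distance $\frac{2}{\bigl||z_H|^2-1\bigr|}$, from the three vertices $w_1,w_2,w_3$ of the tangential triangle given by Proposition \ref{prop:tangentialcoord}. A circle through three non-collinear points is unique, and the vertices $w_k$ are finite exactly when no two vertices of $\triangle z_1z_2z_3$ are antipodal. This holds when $|z_H|\neq1$, since by Example \ref{ex:sympol} we have $|z_1+z_2||z_2+z_3||z_3+z_1|=\bigl||z_H|^2-1\bigr|\neq0$. So the verification identifies the tangential circle.

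By symmetry it suffices to treat $w_1=\frac{2z_2z_3}{z_2+z_3}$. Write $s=z_2+z_3=z_H-z_1$ and $p=z_2z_3$, so that $w_1=2p/s$. Since $|z_2|=|z_3|=1$, we have $\overline{s}=s/p$. Next I would compute
\[
w_1-c=\frac{2\bigl(p(|z_H|^2-1)-s z_H\bigr)}{s(|z_H|^2-1)}.
\]
The key step is to factor the numerator $N=p(z_H\overline{z_H}-1)-sz_H$. Substituting $z_H=z_1+s$ and $\overline{z_H}=\overline{z_1}+s/p$ gives
\[
p z_H\overline{z_H}=p z_H\overline{z_1}+s z_H,
\]
and hence $N=p(z_H\overline{z_1}-1)$. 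Moreover $z_H\overline{z_1}-1=s\overline{z_1}$, so $N=ps\overline{z_1}$.

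It follows that $w_1-c=\frac{2p\overline{z_1}}{|z_H|^2-1}$, whose modulus is $\frac{2}{\bigl||z_H|^2-1\bigr|}$ because $|p|=|z_1|=1$. The factor $s$ cancels, and this is legitimate because $s\neq0$. The same computation with indices permuted cyclically handles $w_2$ and $w_3$, since $c$ is symmetric in $z_1,z_2,z_3$.

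The main obstacle is the algebraic simplification of $N$ and spotting the factorization through $s$. If the substitution above proved messy, I would instead test the candidate center by writing $z_H=z_1+s$ and clearing conjugates using $\overline{z_k}=1/z_k$. A sanity check is the limiting case of the equilateral triangle, $z_H=0$, which gives center $0$ and radius $2$, matching the known tangential circle of radius $2R$.
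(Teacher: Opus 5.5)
Your proof is correct, but it runs in the opposite direction from the paper's. The paper \emph{derives} the circle. From Proposition~\ref{prop:tangentialcoord} it notes that $1/w_1=\tfrac12(\overline{z_H}-\overline{z_1})$, and cyclically, so the condition $|z_1|=1$ becomes $|z_H\bar z-2|=|z|$ at $z=w_1$. Squaring gives $(|z_H|^2-1)|z|^2-2\overline{z_H}z-2z_H\bar z+4=0$, and completing the square produces the center and radius without knowing them in advance. In effect, the tangential vertices are the images of $z_1,z_2,z_3$ under the single map $\zeta\mapsto 2/(\overline{z_H}-\overline{\zeta})$, which explains why they lie on a circle.

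You instead take the center from the statement and \emph{verify} it by factoring $w_1-c$. Your algebra is right, and it gives the sharper identity
\[
w_k-c=\frac{2\sigma_3\,\overline{z_k}^{\,2}}{|z_H|^2-1},\qquad k=1,2,3,
\]
which places the tangential vertices explicitly on their circumcircle. This formula also settles the one point you left implicit. The equality $w_j=w_k$ would force $\overline{z_j}^{\,2}=\overline{z_k}^{\,2}$, i.e.\ $z_j=-z_k$, which is excluded. So the $w_k$ are distinct, and since they lie on a circle they are non-collinear; this fully justifies your appeal to uniqueness of the circumcircle, a step the paper leaves implicit.

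Finally, for finiteness of the $w_k$ you do not need Example~\ref{ex:sympol}, which the paper obtains via the Poncelet parametrization. If $z_j=-z_k$, then $z_H$ equals the third vertex, so $|z_H|=1$.
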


\begin{proof}
Let $w_1$ denote the complex coordinate of the intersection of the tangents to the circumcircle at $z_2$ and $z_3$. From Proposition \ref{prop:tangentialcoord},
\[
w_1=\frac{2z_2z_3}{z_2+z_3}.
\]
Using  
\[
z_H=z_1+z_2+z_3,
\] 
in the last equation, we obtain
\[
    \frac1{w_1}
=\frac12\left(\overline{z_2}+\overline{z_3}\right)
=\frac12\left(\overline{z_H}-\overline{z_1}\right).
\]
Since $|\overline{z_1}|=1$, we have
\[
\left|\overline{z_H}-\frac{2}{w_1}\right|=1.
\]
Let $z=w_1$. Then
\[
|z_H\bar z-2|=|z|.
\]
Squaring both sides gives
\[
(2-z_H\bar z)(2-\overline{z_H}z)=|z|^2,
\]
which simplifies to
\[
(|z_H|^2-1)|z|^2
-2\overline{z_H}z
-2z_H\bar z
+4=0.
\]
Assuming $|z_H|\neq1$, divide by $|z_H|^2-1$ to obtain
\[
|z|^2
-\frac{2\overline{z_H}}{|z_H|^2-1}z
-\frac{2z_H}{|z_H|^2-1}\bar z
+\frac{4}{|z_H|^2-1}=0.
\]
Comparing this with the standard equation of a circle,
\[
|z-c|^2=r^2,
\]
shows that
\begin{equation}\label{eq:incentan}
    c=\frac{2z_H}{|z_H|^2-1},
\end{equation}
and
\[
r^2
=
|c|^2-\frac{4}{|z_H|^2-1}
=
\frac{4}{(|z_H|^2-1)^2}.
\]
Hence
\[
    r=\frac{2}{\bigl||z_H|^2-1\bigr|},
\]
which proves the theorem.
\end{proof}

\begin{corollary}
Let $\triangle z_1z_2z_3$ be inscribed in $\mathbb T$, and let $\triangle w_1w_2w_3$ be its tangential triangle. If $z_H$ denotes the complex coordinate of the orthocenter of $\triangle z_1z_2z_3$, then the quantity
\[
\left|
\frac{1}{\overline{w_1}}
+\frac{1}{\overline{w_2}}
+\frac{1}{\overline{w_3}}
\right|
\]
satisfies the orthocenter criterion.
\end{corollary}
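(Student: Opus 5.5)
Using Proposition \ref{prop:tangentialcoord}, I would compute the reciprocals of the conjugated tangential vertices directly. Since $|z_k|=1$, we have $\overline{z_k}=1/z_k$, and therefore
\[
\overline{w_1}=\frac{2\,\overline{z_2}\,\overline{z_3}}{\overline{z_2}+\overline{z_3}}
=\frac{2}{z_2+z_3}.
\]
This gives
\[
\frac{1}{\overline{w_1}}=\frac{z_2+z_3}{2},
\]
and cyclically for $w_2,w_3$. Geometrically, $1/\overline{w_1}$ is the inverse of $T_A$ in $\mathbb T$, which is the midpoint $M_1$ of $z_2z_3$. This is a useful check on the formula.

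Summing the three identities cyclically, I would obtain
\[
\frac{1}{\overline{w_1}}+\frac{1}{\overline{w_2}}+\frac{1}{\overline{w_3}}
=\frac{2(z_1+z_2+z_3)}{2}=z_H .
\]
Hence the quantity in the statement equals $|z_H|$, the distance from the circumcenter to the orthocenter.

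The conclusion then follows directly from Theorem \ref{thm:orthocentercriterion} with $\Phi(t)=t$. The quantity is invariant throughout $\mathcal P$ if and only if the center of $\mathbb T$ coincides with the center of the conic or with one of its foci.

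The only step needing care is the well-definedness of the $w_k$. We need $z_j+z_k\neq0$, that is, no side of the triangle is a diameter, so the tangential vertices are finite and nonzero. This holds except for right triangles, of which $\mathcal P$ contains at most two by Proposition \ref{prop:obltri}. In the orthocenter-criterion cases it holds for every triangle. I would note this exclusion explicitly, but otherwise I expect no real obstacle: the entire argument is a two-line computation.
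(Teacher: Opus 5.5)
Your proof is correct and follows the paper's argument: you compute $1/\overline{w_k}$ from Proposition~\ref{prop:tangentialcoord}, sum cyclically to reduce the quantity to $|z_H|$, and then invoke Theorem~\ref{thm:orthocentercriterion}. Your value $1/\overline{w_1}=\tfrac12(z_2+z_3)$, which gives the sum $z_H$, is in fact the accurate one; the paper writes $\tfrac12(\overline{z_2}+\overline{z_3})$, which is $1/w_1$, and so obtains $\overline{z_H}$, but the modulus and hence the conclusion are unaffected.
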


\begin{proof}
By Proposition \ref{prop:tangentialcoord},
\[
w_1=\frac{2z_2z_3}{z_2+z_3}.
\]
Since $|z_2|=|z_3|=1$, we have
\[
\frac{1}{\overline{w_1}}
=
\frac{\overline{z_2}+\overline{z_3}}{2}.
\]
Summing cyclically yields
\begin{align*}
\frac{1}{\overline{w_1}}
+\frac{1}{\overline{w_2}}
+\frac{1}{\overline{w_3}}
&=
\overline{z_1}
+\overline{z_2}
+\overline{z_3} \\
&=
\overline{z_H}.
\end{align*}
Hence,
\[
\left|
\frac{1}{\overline{w_1}}
+\frac{1}{\overline{w_2}}
+\frac{1}{\overline{w_3}}
\right|
=
|z_H|.
\]
The conclusion now follows immediately from Theorem \ref{thm:orthocentercriterion}.
\end{proof}

\begin{corollary}
The area of the tangential circle of a triangle  in $\mathcal P$ satisfies the orthocenter \,criterion.

In particular, the tangential circle remains fixed throughout $\mathcal P$ if and only if the circumcenter of the triangle coincides with one of the foci of $\mathcal D$. 
\end{corollary}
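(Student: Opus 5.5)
The plan is to read everything off Theorem \ref{thm:tangcirc}. We normalize the common circumcircle to $\mathbb T$ and let $a_1,a_2$ be the foci of $\mathcal D$. The radius of the tangential circle is $r=2/\bigl||z_H|^2-1\bigr|$, so its area is
\[
\pi r^2=\frac{4\pi}{(|z_H|^2-1)^2}.
\]
This is a function of $|z_H|$ alone, and the map $t\mapsto 4\pi/(t^2-1)^2$ is injective on $t>1$ and on $0\le t<1$. By Theorem \ref{thm:orthcirc}, $|z_H|$ moves continuously with $\lambda$. By Proposition \ref{prop:obltri}, at most two triangles have $|z_H|=1$, so generic members avoid that value. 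Hence the area is constant on $\Lambda$ exactly when $|z_H|$ is constant. Theorem \ref{thm:orthocentercriterion} then gives the first assertion. One small point needs care here: we must exclude the possibility that $|z_H|$ takes values $t$ and $t'$ on opposite sides of $1$ with equal image. This cannot happen, because the image determines $\bigl||z_H|^2-1\bigr|$, and a continuous $|z_H|$ along the connected family cannot jump across $1$ without passing through it.

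For the fixed-circle statement, we use the center $c=2z_H/(|z_H|^2-1)$ from \eqref{eq:incentan}.

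\textbf{Sufficiency.} Suppose $O$ is a focus, say $a_2=0$. Then Corollary \ref{cor:focusorthocenter} gives $z_H=a_1$, a constant. So both $c$ and $r$ are constant and the circle is fixed.

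\textbf{Necessity.} Suppose the circle is fixed. Then $r$ is constant, so $|z_H|$ is constant. By Theorem \ref{thm:orthocentercriterion}, either $a_1a_2=0$, which is the desired conclusion, or $a_1+a_2=0$. In the latter case $z_H=\overline{a_1a_2}\lambda$ by \eqref{eq:zHrelation}. Then $c$ is a nonzero constant multiple of $\lambda$ (nonzero since $a_1a_2\neq0$), so $c$ varies as $\lambda$ ranges over $\Lambda$, contradicting the assumption that the circle is fixed.

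The main obstacle is this last step: we need $\Lambda$ to contain more than one point. This should follow from the Poncelet porism, since distinct triangles of the family have distinct vertex sets. Two triangles with the same $\lambda$ have the same $\sigma_1,\sigma_2,\sigma_3$ by Theorem \ref{thm:sympara}, hence the same vertices, so the infinite family yields infinitely many $\lambda$.
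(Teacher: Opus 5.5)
Your overall route is the paper's. You read the radius $2/\bigl||z_H|^2-1\bigr|$ and the center $2z_H/(|z_H|^2-1)$ off Theorem~\ref{thm:tangcirc}, then reduce to $z_H$ via the orthocenter criterion. The paper argues the fixed-circle part slightly more directly (circle fixed iff $z_H$ fixed iff $a_1a_2=0$), but your sufficiency/necessity argument is fine, and so is your proof that $\Lambda$ is infinite. You are also right that ``the area depends only on $|z_H|$'' does not by itself give ``the area is constant iff $|z_H|$ is constant''; the paper passes over this point.

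\textbf{The gap.} The weak step is how you rule out $|z_H|$ lying on both sides of $1$. You appeal to ``the connected family'', but connectedness of $\mathcal P$ (equivalently of $\Lambda$) is never established, and it is false in general. When the conic is not inside the disk, $\Lambda$ is a proper subset of $\mathbb T$ and can split. For example, take foci $a_1=0.9$, $a_2=2$. The unit circle then crosses the near branch of the hyperbola four times, and the admissible vertices fill two disjoint arcs symmetric about the real axis. The real family consists of two mirror-image components, so $\Lambda$ is a union of two disjoint arcs.

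\textbf{The repair.} It uses only what you already have. By \eqref{eq:zHrelation},
\[
|z_H|^2=|a_1+a_2|^2+|a_1a_2|^2+2\operatorname{Re}\bigl((a_1+a_2)a_1a_2\,\overline{\lambda}\bigr).
\]
Suppose $(a_1+a_2)a_1a_2\neq0$. Then each value of $|z_H|^2$ is attained at no more than two points of $\mathbb T$. If the area is constant, then $\bigl||z_H|^2-1\bigr|=k$ is constant, so $|z_H|^2\in\{1-k,1+k\}$ for all but at most two (right-triangle) members. Hence $\Lambda$ would be finite, contradicting the infiniteness of $\Lambda$ proved in your last paragraph. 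Alternatively, run your continuity argument on a single arc of $\Lambda$ and note that a nonconstant degree-one trigonometric polynomial cannot be constant on an arc. With this substitution, your necessity argument for the fixed-circle assertion, which uses ``$r$ constant $\Rightarrow$ $|z_H|$ constant'', is also secured.
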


\begin{proof}
Let $\triangle ABC \in \mathcal P$. By Theorem \ref{thm:tangcirc},
the radius of its tangential circle is
\[
\frac{2}{\bigl||z_H|^2-1\bigr|},
\]
which depends only on $|z_H|$. Note that $|z_H| \neq 1$ (Proposition \ref{prop:obltri}).

It follows from \eqref{eq:incentan} that the center $c$ of the tangential triangle of $\triangle 
ABC$ remains fixed if and only if $z_H$ is. The conclusion therefore follows from
Theorem \ref{thm:orthocentercriterion}.
\end{proof}

The tangential triangle and the orthic triangle of a triangle are homothetic. Consequently, the following theorem is an immediate consequence of Theorem \ref{thm:orthicarearatio}. So we state the result without proof.

\begin{theorem}
The ratio of the area of a triangle in $\mathcal P$ and the area of its tangential triangle satisfies the orthocenter criterion. 
\end{theorem}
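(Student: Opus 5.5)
Although the paper treats this as immediate from the homothety with the orthic triangle, I would give a direct computation with Proposition \ref{prop:tangentialcoord}. This avoids the case distinction between acute and obtuse triangles, which affects how the homothety is realized. Normalize the circumcircle to $\mathbb T$ with vertices $z_1,z_2,z_3$. By Proposition \ref{prop:obltri} we have $|z_H|\neq 1$, so no triangle in $\mathcal P$ is right. Hence $z_k+z_{k+1}\neq 0$ for all $k$, and the tangential vertices $w_1,w_2,w_3$ are finite.

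\textbf{Side lengths of the tangential triangle.} From $w_3=\frac{2z_1z_2}{z_1+z_2}$ and $w_2=\frac{2z_3z_1}{z_3+z_1}$ we get
\[
w_3-w_2=\frac{2z_1\bigl(z_2(z_3+z_1)-z_3(z_1+z_2)\bigr)}{(z_1+z_2)(z_3+z_1)}=\frac{2z_1^2(z_2-z_3)}{(z_1+z_2)(z_3+z_1)}.
\]
Since $|z_1|=1$, this gives $|w_3-w_2|=\dfrac{2|z_2-z_3|}{|z_1+z_2|\,|z_3+z_1|}$, and the other two sides follow cyclically.

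\textbf{Area ratio.} For both triangles the area equals the product of the side lengths divided by $4\times$circumradius. The circumradius of $\triangle ABC$ is $1$, and by Theorem \ref{thm:tangcirc} the tangential circle has radius $2/\bigl||z_H|^2-1\bigr|$. The product of the tangential side lengths is
\[
\frac{8\,|z_1-z_2||z_2-z_3||z_3-z_1|}{\bigl(|z_1+z_2||z_2+z_3||z_3+z_1|\bigr)^2},
\]
and Example \ref{ex:sympol} gives $|z_1+z_2||z_2+z_3||z_3+z_1|=\bigl||z_H|^2-1\bigr|$. Substituting, the factor $|z_1-z_2||z_2-z_3||z_3-z_1|$ cancels, and the ratio becomes
\[
\frac{\operatorname{Area}(\triangle T_AT_BT_C)}{\operatorname{Area}(\triangle ABC)}=\frac{8/\bigl||z_H|^2-1\bigr|^2}{4\cdot 2/\bigl||z_H|^2-1\bigr|}=\frac{4}{\bigl||z_H|^2-1\bigr|}.
\]
As a check, the product of this ratio with the orthic ratio $\frac14\bigl||z_H|^2-1\bigr|$ from Theorem \ref{thm:orthicarearatio} is $1$. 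This matches the classical fact that the triangle is the geometric mean of its orthic and tangential triangles in area. It also confirms the homothety route: the homothety ratio is $(|z_H|^2-1)/4$ up to sign.

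\textbf{Conclusion and main obstacle.} The ratio is a function of $|z_H|$ alone, and it is injective on $|z_H|\ge 0$ away from $|z_H|=1$. So it is invariant exactly when $|z_H|$ is, and Theorem \ref{thm:orthocentercriterion} finishes the proof. The only delicate point is the simplification of $w_3-w_2$ above; I would verify that numerator identity carefully, since everything else is bookkeeping with earlier results.
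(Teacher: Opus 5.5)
Your route is genuinely different from the paper's, and your computation is correct. The paper gives no computation: it notes that the orthic and tangential triangles are homothetic and calls the result immediate from Theorem \ref{thm:orthicarearatio}. But a homothety only gives similarity. To pass from the orthic ratio $\tfrac14\bigl||z_H|^2-1\bigr|$ to the tangential one, you still need the homothety factor, which comes from the circumradii $\tfrac12$ (nine-point circle) and $2/\bigl||z_H|^2-1\bigr|$ (Theorem \ref{thm:tangcirc}). Your computation of the tangential side lengths, combined with $\operatorname{Area}=abc/(4R)$, makes this explicit. The identity $w_3-w_2=2z_1^2(z_2-z_3)/\bigl((z_1+z_2)(z_3+z_1)\bigr)$ is right. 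Your route also avoids the acute/obtuse orientation issue and produces the actual value $4/\bigl||z_H|^2-1\bigr|$, which the paper never states. One cosmetic slip: your displayed fraction equals $1/\bigl||z_H|^2-1\bigr|$, because it drops the factor $4$ from $\operatorname{Area}(\triangle ABC)=|z_1-z_2||z_2-z_3||z_3-z_1|/4$; your final value is nevertheless correct.

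Two justifications are wrong, and the second carries the nontrivial half of the criterion. The delicate point is not the numerator identity, which is routine.
\begin{itemize}
\item Proposition \ref{prop:obltri} does not give $|z_H|\neq1$ on all of $\mathcal P$. It says a family contains at most two right triangles, and none only when the circumcenter is the center or a focus. In general you must exclude those (at most two) members, whose tangential triangle has a vertex at infinity.
\item The map $t\mapsto 4/|t^2-1|$ is \emph{not} injective on $[0,1)\cup(1,\infty)$: $t=0$ and $t=\sqrt2$ both give $4$. A single family can contain both acute and obtuse triangles; for instance, $a_1=0.8$, $a_2=0.7$ gives $|z_H|$ ranging over $[0.94,2.06]$. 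So ``ratio constant $\Rightarrow$ $|z_H|$ constant'' does not follow from what you wrote.
\end{itemize}

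The repair is short. Suppose the ratio equals $c$ on the non-right members. Then $|z_H|^2\in\{1-4/c,\,1+4/c\}$ there. But
\[
|z_H|^2=|a_1+a_2|^2+|a_1a_2|^2+2\operatorname{Re}\bigl((a_1+a_2)a_1a_2\overline{\lambda}\bigr),
\]
which attains each real value for at most two $\lambda\in\mathbb T$ unless $(a_1+a_2)a_1a_2=0$. Meanwhile $\Lambda$ is infinite, since by Theorem \ref{thm:sympara} the parameter $\lambda$ determines $\sigma_1,\sigma_2,\sigma_3$ and hence the triangle. This forces $(a_1+a_2)a_1a_2=0$. The paper's appeals to Theorem \ref{thm:orthocentercriterion} for specific functions of $|z_H|$ skip this same step. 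You should supply it rather than assert a false injectivity.
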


\subsection{Power Circle}\label{sec:powcirc}
The power circles constitute another natural family of circles associated with a triangle. In this subsection, we investigate their behavior in a Poncelet family and identify the geometric configurations for which they remain invariant.
\begin{definition}
   The circles each passing through a vertex of a triangle and centered at the midpoint of the opposite side are called the \emph{power circles} of the triangle.
\end{definition}

Corollary \ref{cor:mediansqsum} provides a particularly short alternative proof of Theorem 6.1 from \cite{Dragovic-Murad2026}, which we reformulate and include here for completeness.
\begin{theorem}
 The total area of the power circles of a triangle in $\mathcal P$ satisfies the orthocenter criterion.
\end{theorem}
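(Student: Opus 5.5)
Normalize the common circumcircle of $\mathcal P$ to the unit circle $\mathbb T$, and let $z_1,z_2,z_3\in\mathbb T$ be the vertices of $\triangle ABC\in\mathcal P$. The power circle through $z_1$ is centered at the midpoint $(z_2+z_3)/2$ of the opposite side. Its radius is therefore the length of the median from $z_1$:
\[
\left|z_1-\frac{z_2+z_3}{2}\right|.
\]
The same holds cyclically for the other two power circles. Hence the total area of the three power circles is
\[
\pi\left(\left|z_1-\frac{z_2+z_3}{2}\right|^2+\left|z_2-\frac{z_3+z_1}{2}\right|^2+\left|z_3-\frac{z_1+z_2}{2}\right|^2\right).
\]

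By Corollary \ref{cor:mediansqsum}, this sum of squared medians equals $\frac{27}{4}-\frac34|z_H|^2$. The total area therefore becomes
\[
\pi\left(\frac{27}{4}-\frac34|z_H|^2\right)=\frac{3\pi}{4}\bigl(9-|z_H|^2\bigr).
\]
Equivalently, by Corollary \ref{cor:sumsqdis}, this is $\frac{3\pi}{4}$ times the sum of the squared side lengths.

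The total area is thus of the form $\Phi(|z_H|)$ for the fixed function $\Phi(t)=\frac{3\pi}{4}(9-t^2)$. This $\Phi$ is strictly monotone on $t>0$, so the total area is constant on $\Lambda$ exactly when $|z_H|$ is constant on $\Lambda$. Theorem \ref{thm:orthocentercriterion} identifies that case: $|z_H|$ is constant precisely when the circumcenter coincides with the center of the conic or with one of its foci. This is the orthocenter criterion.

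There is no real obstacle. The only point needing care is the converse direction: non-invariance of $|z_H|$ must imply non-invariance of the area. This follows from the injectivity of $t\mapsto 9-t^2$ on $t>0$, since constancy of $\Phi(|z_H|)$ then forces constancy of $|z_H|$. The identification of each power-circle radius with a median length is immediate from the definition, since the circle passes through the vertex and is centered at the midpoint of the opposite side.
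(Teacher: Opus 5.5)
Your proof is correct and follows essentially the same route as the paper. You identify each power-circle radius with a median, use Corollary~\ref{cor:mediansqsum} to obtain the total area $\frac{3\pi}{4}(9-|z_H|^2)$, and conclude via Theorem~\ref{thm:orthocentercriterion}; your remark that $t\mapsto 9-t^2$ is injective (it is in fact injective on all of $t\ge 0$) usefully makes explicit the converse direction, which the paper leaves implicit.
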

\begin{proof} We may assume that the common circumcircle of the family $\mathcal P$ is $\mathbb T$, and let $\triangle z_1z_2z_3\in\mathcal P$. Then the total area of the power circles of $\triangle z_1z_2z_3$ is
\[
\operatorname{Area}(\mathcal C_1)
+\operatorname{Area}(\mathcal C_2)
+\operatorname{Area}(\mathcal C_3)
=
\frac{3\pi}{4}(9-|z_H|^2),
\]
where $\mathcal C_k$ denotes the power circle passing through the vertex $z_k$, for $k=1,2,3$.

The conclusion now follows immediately from \eqref{eq:mediansqsum} and Theorem \ref{thm:orthocentercriterion}. In particular,
\[
\operatorname{Area}(\mathcal C_1)
+\operatorname{Area}(\mathcal C_2)
+\operatorname{Area}(\mathcal C_3)
=
\frac{3\pi}{4}(9-|a_1|^4)
\]
whenever the circumcenter coincides with the center of the conic, and
\[
\operatorname{Area}(\mathcal C_1)
+\operatorname{Area}(\mathcal C_2)
+\operatorname{Area}(\mathcal C_3)
=
\frac{3\pi}{4}(9-|a_1|^2)
\]
whenever the circumcenter coincides with one of the foci, say $a_2$.
\end{proof}

\subsection{de Longchamps Circle}\label{sec:delongcirc}

The de Longchamps circle is another classical object associated with a triangle via the power circles.

\begin{definition}\label{def:deLongchampcirc}
Let $\triangle ABC$ be an obtuse triangle. The radical circle of the three power circles of $\triangle ABC$ is called the \emph{de Longchamps circle}. It is centered at the de Longchamps point $L$ and has radius
\[
R_L=4R\sqrt{|\cos A\cos B\cos C|}.
\]
\end{definition}

\begin{theorem}
Let $\triangle ABC \in \mathcal P$.
\begin{itemize}
\item[(a)]
The area of the de Longchamps circle of $\triangle ABC$ satisfies the orthocenter criterion.

\item[(b)]
The de Longchamps circle of $\triangle ABC$ remains fixed throughout the family if and only if the circumcenter of $\triangle ABC$ coincides with one of the foci of $\mathcal D$.
\end{itemize}
\end{theorem}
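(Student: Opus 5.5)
Normalize the common circumcircle to $\mathbb T$, so $R=1$, and let $z_1,z_2,z_3$ be the vertices with $z_H=z_1+z_2+z_3$.

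For part (a), combine Definition \ref{def:deLongchampcirc} with Theorem \ref{thm:sina}. Since $\cos A\cos B\cos C=(1-|z_H|^2)/8$, we get
\[
R_L^2=16\,|\cos A\cos B\cos C|=2\bigl||z_H|^2-1\bigr|.
\]
Hence the area $\pi R_L^2=2\pi\bigl||z_H|^2-1\bigr|$ is a function of $|z_H|$ alone. Moreover $t\mapsto |t^2-1|$ is injective on the range of $|z_H|$: all triangles in $\mathcal P$ lie on the same side of $|z_H|=1$ except possibly at the at most two right triangles of Proposition \ref{prop:obltri}. So the area is invariant exactly when $|z_H|$ is, and Theorem \ref{thm:orthocentercriterion} finishes (a). As an aside, $R_L^2=4r_p^2$ by Theorem \ref{thm:AH_HHA}, which would be a useful cross-check.

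For part (b), locate the de Longchamps point. It is the reflection of the orthocenter in the circumcenter, so $z_L=-z_H$. If I wanted a self-contained check, I would verify directly that $-z_H$ has equal power with respect to the three power circles. The circle centered at $(z_2+z_3)/2$ has radius $|z_1-(z_2+z_3)/2|$, and the computation reduces to the cyclic symmetry of $|z_H+(z_2+z_3)/2|^2-|z_1-(z_2+z_3)/2|^2$, using $|z_k|=1$.

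With $z_L=-z_H$, the circle is fixed precisely when both $z_H$ and $R_L$ are constant on $\Lambda$. By \eqref{eq:zHrelation}, $z_H=a_1+a_2+\overline{a_1a_2}\lambda$ is constant iff $a_1a_2=0$, i.e.\ the circumcenter is a focus; equivalently one can cite Corollary \ref{cor:focusorthocenter}. In that case $|z_H|$ is also constant, so $R_L$ is constant by (a). Conversely, a fixed circle forces a fixed center and hence $a_1a_2=0$. This is the same argument as for the polar and tangential circles.

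The main obstacle is definitional rather than computational. The de Longchamps circle is only real for obtuse triangles, and the family is obtuse throughout only in certain configurations (cf.\ Corollary \ref{cor:focus}: hyperbola with focus at the circumcenter). I will handle this by reading the statement over the triangles where the circle is defined. I will also note that the formula $R_L^2=2\bigl||z_H|^2-1\bigr|$ does not depend on obtuseness, so the criterion is unaffected.
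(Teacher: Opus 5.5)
Your route is the paper's. Part (a) comes from Definition~\ref{def:deLongchampcirc} and Theorem~\ref{thm:sina}, giving $R_L^2=2\bigl||z_H|^2-1\bigr|$. Part (b) comes from $z_L=-z_H$ together with \eqref{eq:zHrelation}. Citing \eqref{eq:zHrelation} or Corollary~\ref{cor:focusorthocenter} is more precise than the paper's appeal to Theorem~\ref{thm:orthocentercriterion}, which concerns $|z_H|$ rather than $z_H$. Your power-of-a-point check (the common power is $2(|z_H|^2-1)$) and the cross-check $R_L^2=4r_p^2$ are both correct.

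\textbf{The gap.} The step that fails is your argument that invariance of the area forces invariance of $|z_H|$. It is false that all members of $\mathcal P$ lie on one side of $|z_H|=1$. Proposition~\ref{prop:obltri} only says the orthocenter circle $\Gamma$ meets $\mathbb T$ at most twice; when it meets it twice, $\Gamma$ has arcs on both sides. For example, take foci $a_1=\tfrac12$, $a_2=\tfrac35$. Then $\Lambda=\mathbb T$ and $z_H=\tfrac{11}{10}+\tfrac{3}{10}\lambda$, so $|z_H|$ takes every value in $[\tfrac45,\tfrac75]$. This family contains both acute and obtuse triangles, and $t\mapsto|t^2-1|$ is not injective on this range: $t=\tfrac45$ and $t=\tfrac{\sqrt{34}}{5}$ give the same value.

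\textbf{The repair.} The conclusion survives, but by a finite-to-one argument. Since
\[
|z_H|^2=|a_1+a_2|^2+|a_1a_2|^2+2\operatorname{Re}\bigl((a_1+a_2)a_1a_2\overline{\lambda}\bigr),
\]
constancy of $\bigl||z_H|^2-1\bigr|$ on $\Lambda$ forces $\operatorname{Re}\bigl((a_1+a_2)a_1a_2\overline{\lambda}\bigr)$ to take at most two values there. If $(a_1+a_2)a_1a_2\neq0$, each such value is attained at no more than two points of $\mathbb T$. But $\Lambda$ is infinite: distinct triangles have distinct $\lambda$, because $\lambda$ determines $\sigma_1,\sigma_2,\sigma_3$. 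Hence $(a_1+a_2)a_1a_2=0$.

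The paper's one-line proof of (a) passes over this direction silently, so you were right to address it, just not with that lemma. If you instead restrict to the obtuse members, injectivity is automatic since $|z_H|>1$ there. You would then still need the same argument on the obtuse part of $\Lambda$, because Theorem~\ref{thm:orthocentercriterion} is stated for the whole family.
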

\begin{proof}
Assume that $\mathcal C=\mathbb T$. It follows from Definition \ref{def:deLongchampcirc} and Theorem \ref{thm:sina} that $R_L$ satisfies the orthocenter criterion.

For the second assertion, the de Longchamps point is the reflection of the orthocenter about the circumcenter. Since the circumcenter is the origin, we have
\[
z_L=-z_H.
\]
Thus the center of the de Longchamps circle is fixed if and only if $z_H$ is fixed. By Theorem \ref{thm:orthocentercriterion}, this occurs precisely when the circumcenter coincides with one of the foci of $\mathcal D$.
\end{proof}

\section{Analytic Construction of Central Inconics}
We now consider an inverse problem. Given an oblique triangle,  construct a central conic inscribed in the triangle whose foci are the circumcenter and orthocenter of the triangle. The \,following proposition shows that such a conic always exists, is unique, and can be written explicitly.

\begin{proposition}\label{prop:construction}
Every oblique triangle admits a unique central conic inscribed in it whose foci are the circumcenter and the orthocenter of the triangle. Moreover, the conic is an ellipse if the triangle is acute and a hyperbola if it is obtuse.
\end{proposition}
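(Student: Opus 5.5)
Normalize the circumcircle to $\mathbb T$, so the circumcenter is $0$ and the orthocenter is $z_H=z_1+z_2+z_3=\sigma_1$. We want a central conic with foci $a_1=z_H$ and $a_2=0$ inscribed in $\triangle z_1z_2z_3$. The natural tool is the converse direction of Marden's theorem, or equivalently of Theorem~\ref{thm:sympara}. By Siebeck--Marden, a triangle with vertices $z_1,z_2,z_3$ has a unique inconic (possibly a hyperbola in the extended sense of \cite{Murad2026b}) whose foci are the roots of $p'(z)=3z^2-2\sigma_1 z+\sigma_2$, where $p(z)=(z-z_1)(z-z_2)(z-z_3)$. For non-real-collinear configurations, \cite{Murad2026b} extends this to central conics with foci $a_1,a_2$ tangent to the side lines. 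These satisfy the relations \eqref{eq:sigma1}--\eqref{eq:sigma3} with $\lambda=\sigma_3$.

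\textbf{Existence.} Set $a_1=z_H$, $a_2=0$ and check \eqref{eq:sigma1}--\eqref{eq:sigma3}:
\[
\sigma_1=a_1+a_2+\overline{a_1a_2}\lambda=z_H
\]
holds trivially. The second relation requires
\[
\sigma_2=\overline{z_H}\,\lambda.
\]
This is true for any triangle on $\mathbb T$, since $\overline{\sigma_1}=\sigma_2/\sigma_3$. So the symmetric parametrization is satisfied. I will then invoke the converse of Theorem~\ref{thm:sympara} from \cite{Murad2026b}: whenever $\sigma_1,\sigma_2,\sigma_3$ are of this form with $|a_i|\neq 1$, the triangle circumscribes the central conic with foci $a_1,a_2$. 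The condition $|z_H|\neq1$ is exactly obliqueness (cf.\ Proposition~\ref{prop:obltri}).

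If a clean converse is not available, I will construct the conic directly instead. I will take the conic $\bigl||z-z_H|\pm|z|\bigr|=2a$ and verify tangency to line $z_2z_3$ via the reflection property: reflect the focus $0$ across the line $z_2z_3$ to get $z_2+z_3$. Then $|z_H-(z_2+z_3)|=|z_1|=1$, and the same value $1$ arises for every side. Tangency of a line to a central conic with foci $F_1,F_2$ holds iff the distance from $F_1$ to the reflection of $F_2$ in the line equals the major axis $2a$. So all three sides are tangent to the conic with $2a=1$ (i.e.\ $R$ after rescaling). This direct computation is short, so I will likely present it as the main argument. I will choose the ellipse sign when both foci lie on the same side of each side line and the hyperbola sign otherwise.

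\textbf{Type.} The conic is an ellipse iff $|F_1F_2|<2a$, i.e.\ $|z_H|<1$, which means the triangle is acute. It is a hyperbola iff $|z_H|>1$, meaning obtuse. In the right case $|z_H|=1$ the conic degenerates, which is why obliqueness is assumed.

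\textbf{Uniqueness.} Given foci $F_1,F_2$, the tangency condition for each line determines $2a$ as the distance from $F_1$ to the reflection of $F_2$, so $a$ and hence the conic are forced. Alternatively, uniqueness follows from Corollary~\ref{cor:focusorthocenter} together with the uniqueness of the inconic with prescribed foci.

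The main obstacle is making the reflection/tangency criterion rigorous for hyperbolas, where tangent lines separate the foci, and ensuring the tangency points lie on the correct branch. I would handle this by checking that for an obtuse triangle the foci $0$ and $z_H$ lie on opposite sides of each side line or on the same side appropriately, matching the hyperbola convention of \cite{Dragovic-Murad2026}.
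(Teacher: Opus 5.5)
Your reflection argument is sound and takes a genuinely different route from the paper, but as you state it the tangency criterion has a real gap for hyperbolas; it is described in the last paragraph.

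\textbf{Comparison with the paper.} The paper substitutes $a_1=z_H$, $a_2=0$ into the focal equation $\bigl||z-a_1|\pm|z-a_2|\bigr|=|1-\overline{a_1}a_2|$ of an inconic from \cite{Murad2026b}. This gives $\bigl||z-z_H|\pm|z|\bigr|=1$, and the sign is fixed via Corollary~\ref{cor:focus}. It never checks that this curve actually touches the side lines, so existence is taken on trust from the framework; Corollary~\ref{cor:focus} itself presupposes the conic. Your computation works directly: the reflection of $0$ in the line $z_2z_3$ is $z_2+z_3$, so $|z_H-(z_2+z_3)|=|z_1|=1$ for every side. This proves existence, forces $2a=1$ (hence uniqueness), recovers the paper's equation, and gives the type from $|z_H|\gtrless 1$. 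Your first route only reduces the claim to a converse of Theorem~\ref{thm:sympara}, which the paper does not supply; the second route makes it unnecessary.

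\textbf{Closing the same-side/opposite-side check.} Let $d_O,d_H$ be the signed distances of $0$ and $z_H$ from the side line. Reflecting $0$ gives
\[
|z_H-(z_2+z_3)|^2-|z_H|^2=4d_Od_H,\qquad\text{i.e.}\qquad 4d_Od_H=1-|z_H|^2 ,
\]
and the same value appears for all three sides. (This is $8\cos A\cos B\cos C=1-|z_H|^2$ from Theorem~\ref{thm:sina} in disguise.) Consequences:
\begin{enumerate}
\item[(i)] For an acute triangle, $O$ and $H$ lie strictly on the same side of every side line. For an obtuse triangle they lie on opposite sides of every side line. So the ellipse/hyperbola choice is automatically consistent across the three sides.
\item[(ii)] In the acute case the segment $[z_H,z_2+z_3]$ crosses the side at one point, and the triangle inequality gives $|Q-z_H|+|Q|>1$ at every other point $Q$ of the line.
\item[(iii)] In the obtuse case $\bigl||Q-z_H|-|Q|\bigr|=\bigl||Q-z_H|-|Q-(z_2+z_3)|\bigr|\le 1$ on the line. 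Equality holds only where the line through $z_H$ and $z_2+z_3$ meets it.
\end{enumerate}
There is no branch condition to verify: ``inscribed'' only means tangent to the side lines.

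\textbf{The gap.} Your ``iff'' criterion is false for hyperbolas, because asymptotes also satisfy $|F_1F_2'|=2a$. Here this happens exactly when the line through $z_H$ and $z_2+z_3$ (direction $z_1$) is parallel to the side. Equivalently:
\begin{enumerate}
\item[(i)] $z_1^2=-z_2z_3$;
\item[(ii)] the nine-point center $z_H/2$ lies on the side line, whose signed distance from it is $\tfrac12\cos(B-C)$;
\item[(iii)] $|B-C|=\pi/2$, with $B,C$ the angles at $z_2,z_3$.
\end{enumerate}
This does occur. Take $z_1=e^{-2\pi i/9}$, $z_2=1$, $z_3=e^{5\pi i/9}$, with angles $50^\circ,110^\circ,20^\circ$. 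Then the line $z_2z_3$ is an asymptote of $\bigl||z-z_H|-|z|\bigr|=1$. Tangency at a finite point would force $2a=1$, i.e.\ this very hyperbola, so no central conic with foci $O,H$ touches that side at a finite point.

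You must therefore either count asymptotes as tangent lines (tangency at infinity, which the algebraic framework does implicitly) or exclude this case. The paper's proof passes over it as well.
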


\begin{proof} Let $\triangle z_1z_2z_3$ be an oblique triangle inscribed in $\mathbb T$. A central conic with foci $a_1,a_2 \in \mathbb C$ inscribed in $\triangle z_1z_2z_3$ is given by the following equation \cite{Murad2026b}:
\[
\mathcal D:\left||z-a_1|\pm|z-a_2|\right|=|1-\overline{a_1}a_2|
\]
where the positive sign corresponds to an ellipse and the negative sign corresponds to a hyperbola.

If $\triangle ABC$ is acute, then $\mathcal D$ must be an ellipse (Corollary \ref{cor:focus}). Therefore, using $a_1=z_H$ and $a_2=0$, we find 
\[
\mathcal D:|z-z_H|+|z|=1
\]
is the required ellipse.

Similarly, if $\triangle ABC$ is obtuse,
\[
\mathcal D:\vert|z-z_H|-|z|\vert=1
\]
is the required hyperbola. See Figure \ref{fig:construction}.
\end{proof}

\begin{figure}
    \centering
\definecolor{wewdxt}{rgb}{0.43137254901960786,0.42745098039215684,0.45098039215686275}
\begin{subfigure}{0.45\textwidth}
\begin{tikzpicture}[scale=1.5]
\clip(-1.5,-1.2) rectangle (2.5,2.5);
\draw [rotate around={86.4999999999997:(0.6785714285714285,0.8928571428571428)},line width=1.pt] (0.6785714285714285,0.8928571428571428) ellipse (0.9105391988558542cm and 0.6060915267313262cm);
\draw [line width=1.pt,gray] (1.,2.)-- (-1.,1.);
\draw [line width=1.pt,gray] (-1.,1.)-- (2.,-1.);
\draw [line width=1.pt,gray] (2.,-1.)-- (1.,2.);
\draw [rotate around={86.4999999999997:(0.6785714285714285,0.8928571428571428)},line width=1.pt,red] (0.6785714285714285,0.8928571428571428) ellipse (0.9105391988558542cm and 0.6060915267313262cm);
\begin{scriptsize}
\draw [fill=gray] (1.,2.) circle (1.0pt);
\draw[color=black] (1,2.15) node {$A$};
\draw [fill=gray] (-1.,1.) circle (1.0pt);
\draw[color=black] (-1.15,1.099206035725166) node {$B$};
\draw [fill=gray] (2.,-1.) circle (1.0pt);
\draw[color=black] (2.120271496261972,-1.0566106215808306) node {$C$};
\draw [fill=gray] (0.6428571428571429,0.21428571428571427) circle (1.0pt);
\draw[color=black] (0.81,0.3591495712768388) node {$O$};
\draw [fill=gray] (0.7142857142857143,1.5714285714285714) circle (1.0pt);
\draw[color=black] (0.85,1.6247533800435434) node {$H$};
\draw [fill=gray] (0.6785714285714286,0.8928571428571428) circle (1.0pt);
\draw[color=black] (0.82,1.0026769316666886) node {$F$};
\draw[color=black] (-0.12134880909600498,0.9597751076406986) node {$\mathcal{D}$};
\end{scriptsize}
\end{tikzpicture}
    \caption{$\triangle ABC$ is acute. $\mathcal{D}$ is an ellipse.}
    \label{fig:construction(A)}
\end{subfigure}
\begin{subfigure}{0.45\textwidth}
 \begin{tikzpicture}[scale=1]
\clip(-4,-2) rectangle (4,5);
\draw [samples=100,domain=-0.99:0.99,rotate around={-117.25:(0.5357142857142857,1.3928571428571426)},xshift=0.5357142857142857cm,yshift=1.3928571428571426cm,line width=1.pt,red] plot ({1.0412414097936245*(1+(\x)^2)/(1-(\x)^2)},{0.8206518066485169*2*(\x)/(1-(\x)^2)});
\draw [samples=100,domain=-0.99:0.99,rotate around={-117.25:(0.5357142857142857,1.3928571428571426)},xshift=0.5357142857142857cm,yshift=1.3928571428571426cm,line width=1.pt,red] plot ({1.0412414097936245*(-1-(\x)^2)/(1-(\x)^2)},{0.8206518066485169*(-2)*(\x)/(1-(\x)^2)});
\draw [line width=1.pt,gray] (1.,2.)-- (-2.,1.);
\draw [line width=1.pt,gray] (-2.,1.)-- (2.,0.);
\draw [line width=1.pt,gray] (2.,0.)-- (1.,2.);
\begin{scriptsize}
\draw [fill=gray] (1.,2.) circle (1.5pt);
\draw[color=black] (1.2218181818181824,2.0709090909090855) node {$A$};
\draw [fill=gray] (-2.,1.) circle (1.5pt);
\draw[color=black] (-2.2,1.107272727272724) node {$B$};
\draw [fill=gray] (2.,0.) circle (1.5pt);
\draw[color=black] (2.203636363636364,-0.1290909090909092) node {$C$};
\draw [fill=gray] (-0.07142857142857142,0.21428571428571427) circle (1.5pt);
\draw[color=black] (-0.26909090909090816,-0.07454545454545478) node {$O$};
\draw [fill=gray] (1.1428571428571428,2.5714285714285716) circle (1.5pt);
\draw[color=black] (1.2763636363636368,2.8709090909090835) node {$H$};
\draw [fill=gray] (0.5357142857142857,1.3928571428571428) circle (1.5pt);
\draw[color=black] (0.6581818181818189,1.6890909090909043) node {$F$};
\draw[color=black] (0.4945454545454553,-1.2018181818181792) node {$\mathcal{D}$};
\end{scriptsize}
\end{tikzpicture}
    \caption{$\triangle ABC$ is obtuse. $\mathcal{D}$ is a hyperbola.}
    \label{fig:construction(B)}   
\end{subfigure}
    \caption{Proposition \ref{prop:construction}.}
    \label{fig:construction}
\end{figure}

\section{Concluding Remarks}
\noindent The applications developed in this paper exhibit a striking common phenomenon: a wide variety of geometric quantities associated with a Poncelet triangle remain invariant precisely when the circumcenter occupies one of two distinguished positions with respect to the underlying central conic. For the reader's convenience, Table \ref{tab:invariants} summarizes the principal invariance results established in this paper.

\begin{table}[ht]
\centering
\caption{Summary of the principal invariance results established in Section \ref{sec:invsympar}.}
\label{tab:invariants}
\renewcommand{\arraystretch}{1.15}
\begin{tabular}{ccl}
\hline
\textbf{Subsection} && \textbf{Invariant quantity} \\
\hline
\ref{sec:lenghtinv} && $|AB|^2+ |BC|^2+ |CA|^2$\\

\ref{sec:anginv} && $ \sin^2 A+ \sin^2 B+ \sin^2 C$, \qquad $\cos A \cos B \cos C$\\

\ref{sec:anginv} && $\displaystyle \cos(\alpha-\beta)+\cos(\beta-\gamma)+\cos(\gamma-\alpha)$\\

\ref{sec:orthictri} &&
$\displaystyle
\frac{\operatorname{Area}(\triangle ABC)}
{\operatorname{Area}(\triangle H_AH_BH_C)}
$\\

\ref{sec:orthictri} &&
Orthic angle invariant:
$\varepsilon\cos\alpha_H+\cos\beta_H+\cos\gamma_H$\\

\ref{sec:orthictri} &&
$|AH|^2+|BH|^2+|CH|^2$, \qquad $|AH|\,|BH|\,|CH|$\\

\ref{sec:orthictri} &&
$|O_{\mathcal D}A|^2+
|O_{\mathcal D}B|^2+
|O_{\mathcal D}C|^2$, \qquad $|O_{\mathcal D}M_1|^2+
|O_{\mathcal D}M_2|^2+
|O_{\mathcal D}M_3|^2$\\

\ref{sec:polcirc} && Radius (equivalently, area) of the polar circle\\

\ref{sec:tangtri} &&
Radius (equivalently, area) of the tangential circle of a  triangle\\

\ref{sec:tangtri} &&
$|OT_A|\,|OT_B|\,|OT_C|$, \qquad $|OP_A|\,|OP_B|\,|OP_C|$\\

\ref{sec:powcirc} &&
$\displaystyle \operatorname{Area}(\mathcal C_A)+\operatorname{Area}(\mathcal C_B)+\operatorname{Area}(\mathcal C_C)$\\

\ref{sec:delongcirc} &&
Radius (equivalently, area) of the de Longchamps circle\\
\hline
\end{tabular}
\end{table}

All the listed quantities in Table \ref{tab:invariants} satisfy the orthocenter criterion. Consequently, the invariance of any one of them throughout the family $\mathcal P$ is equivalent to the invariance of every other.

\medskip

We conclude this paper by proposing a conjecture that extends \cite[Theorem 3.5]{Dragovic-Murad2026} from triangles to cyclic $n$-gons. 
\begin{conjecture}
Let $\mathcal P$ be the family of $n$-gons
inscribed in a circle and circumscribed about a central conic. Then the sum of the areas of the squares constructed on the sides of a polygon in $\mathcal P$ remains invariant throughout the family if and only if the circumcenter of the polygon coincides either with the center of the conic or with one of its foci.
\end{conjecture}

\vfill

\noindent
\textbf{Funding Declaration.} The author received no financial support for the research, authorship, and/or publication of this article.

\noindent
\thanks{\textbf{Acknowledgments}. The author acknowledges the use of \textit{Mathematica} and \textit{GeoGebra} for \,symbolic and numerical computations, as well as for generating figures in this work.

\bibliographystyle{amsplain}  
\bibliography{references}
\end{document}